\theoremstyle{plain}
\newtheorem{mainthm}{Theorem} 
\newtheorem{thm}{Theorem}[section]
\newtheorem{lem}[thm]{Lemma}
\newtheorem{prop}[thm]{Proposition}
\newtheorem{problem}[thm]{Problem}
\theoremstyle{definition}
\newtheorem{dfn}[thm]{Definition}
\newtheorem{example}[thm]{Example}
\theoremstyle{remark}
\newtheorem{remark}[thm]{Remark}
\numberwithin{equation}{section}
\newcommand{\N}{\mathbb{N}}
\newcommand{\Z}{\mathbb{Z}}
\newcommand{\C}{\mathbb{C}}
\newcommand{\vep}{\varepsilon}
\newcommand{\Ga}{\Gamma}
\newcommand{\wmO}{\widetilde{\mathcal{O}}}
\newcommand{\mA}{\mathcal{A}}
\newcommand{\acts}{\curvearrowright}
\newcommand{\T}{\mathbb{T}}
\newcommand{\lv}{\left\lvert}
\newcommand{\rv}{\right\rvert}
\newcommand{\ga}{\gamma}
\newcommand{\mH}{\mathcal{H}}
\newcommand{\mO}{\mathcal{O}}
\let\oldmarginpar\marginpar
\renewcommand\marginpar[1]{\-\oldmarginpar[\raggedleft\tiny #1]%
{\raggedright\tiny #1}}
  \title[Topological entropy for    Exel--Laca algebras]{Topological entropy for   countable Markov shifts and Exel--Laca algebras}
\author[Michimoto]{Yuta Michimoto}
\address[Yuta Michimoto]{Nippon Hy\= oronsha, Tokyo, 170-8474, JAPAN}
\email{roadbook21150@gmail.com}
\author[Nakano]{Yushi Nakano}
\address[Yushi Nakano]{Department of Mathematics, Tokai University,  Kanagawa, 259-1292, JAPAN}
\email{yushi.nakano@tsc.u-tokai.ac.jp}
\author[Toyokawa]{Hisayoshi Toyokawa}
\address[Hisayoshi Toyokawa]{Faculty of Engineering, Kitami Institute of Technology, Hokkaido, 090-8507, JAPAN}
\email{h\_toyokawa@mail.kitami-it.ac.jp}
\author[Yoshida]{Keisuke Yoshida}
\address[Keisuke Yoshida]{Department of Mathematics, Faculty of Science, Hokkaido University, Hokkaido, 060-0810, JAPAN}
\email{kskyhuni@math.sci.hokudai.ac.jp}
    \date{}
\subjclass[2020]{46L55, 37B40, 37B10}
\keywords{Topological entropy, countable Markov shifts, Exel-Laca algebras, KMS states, renewal shifts}
\begin{document}
   \begin{abstract}
We show that the (Gurevich) topological entropy for the countable Markov shift associated with an infinite transition matrix $A$  coincides with the non-commutative topological entropy for the Exel--Laca algebra associated with $A$, under certain conditions on $A$.
An important example satisfying the conditions is the renewal shift, which is not locally finite.
We also pose interesting questions for future research on non-commutative topological entropy for non-locally finite transition matrices.
\end{abstract}
  \maketitle
  
  \tableofcontents

\section{Introduction}
The topological entropy was first introduced in 1965 by Adler, Konheim and McAndrew \cite{AKM1965} for continuous maps on a compact metric space as a criterion to measure how  complicated  the global behavior of the dynamical system is, and enjoyed a great success in ergodic theory (such as mathematically rigorous foundation of statistical mechanics \cite{R1999,W2000}, thermodynamical formalism in fractal geometry \cite{PU2010}, or a key tool in non-uniformly hyperbolic systems theory \cite{BDV2004}).
Hence, it was a natural hope to extend the concept of topological entropy to operator algebras.
Among several attempts, Voiculescu \cite{V1995} in 1995 introduced his useful definition of non-commutative entropy for  completely positive maps on nuclear C$^*$-algebras, and it was later extended to exact C$^*$-algebras by Brown \cite{B1999}.
We refer to monographs \cite{NS2006,RS2013} for backgrounds of non-commutative entropy.

In   \cite{C1996}, Choda showed that the topological entropy for full shifts with a finite alphabet coincides with the non-commutative topological entropy for the associated Cuntz algebra. 
This was extended by Boca and Goldstein in \cite{BG2000} to finite Markov shifts and their associated Cuntz--Krieger algebras. 
On the other hand, Matsumoto \cite{M2005} gave examples of subshifts on a finite state whose topological entropy is \emph{strictly smaller} than the non-commutative topological entropy for the associated Matsumoto algebra. 
In this paper, we consider another important generalization of finite Markov shifts, i.e.~countable Markov shifts, and establish that its (Gurevich) topological entropy coincides with the non-commutative topological entropy for  the associated Exel--Laca algebra, under certain  conditions on the transition matrix and KMS states:
\begin{mainthm} \label{thm:main}
Let $\Sigma _A$ and $\mathcal O_A$ be the countable Markov shift and Exel--Laca algebra associated with a $0$-$1$ matrix $A$.
Let $\sigma _A: \Sigma _A\to \Sigma _A$ be the left shift operation and $\gamma _A:\mathcal O_A\to \mathcal O_A$ be the canonical completely positive map.
Assume that {\tt (SH)}, {\tt (FS)}, {\tt (AF)},  {\tt (SI)}, {\tt (SD)} and {\tt (O)} hold for $A$ and there exists a KMS state of the canonical gauge action on $\mO _A$.
Then, 
\begin{equation*}
ht (\gamma _A) = h_{\mathrm{G}}(\sigma _A).
\end{equation*}
\end{mainthm}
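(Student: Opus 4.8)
The plan is to prove the two inequalities $h_{\mathrm G}(\sigma_A)\le ht(\gamma_A)$ and $ht(\gamma_A)\le h_{\mathrm G}(\sigma_A)$ separately. For the lower bound I would work with the commutative ``diagonal'' C$^*$-subalgebra $\mathcal D_A\subseteq\mathcal O_A$ generated by the range projections of the admissible words $s_\alpha$. Since $\gamma_A$ sends range projections to (convergent sums of) range projections, $\mathcal D_A$ is $\gamma_A$-invariant, and an elementary computation using the orthogonality of the ranges of the $s_i$ shows that $\gamma_A|_{\mathcal D_A}$ is a $*$-endomorphism. Under the standing hypotheses {\tt (SH)}, {\tt (FS)}, {\tt (AF)}, {\tt (SI)}, {\tt (SD)} and {\tt (O)} its Gelfand spectrum is identified with a natural (locally) compact model $\widehat\Sigma_A$ of the one-sided Markov shift space, on which $\gamma_A|_{\mathcal D_A}$ becomes the Koopman operator $f\mapsto f\circ\widehat\sigma_A$ of the induced continuous shift $\widehat\sigma_A$. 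Voiculescu's computation of the completely positive entropy of Koopman operators on commutative C$^*$-algebras then gives $ht(\gamma_A|_{\mathcal D_A})=h_{\mathrm{top}}(\widehat\sigma_A)$; since $\widehat\Sigma_A$ contains every finite Markov subshift $Y\subseteq\Sigma_A$ as a $\widehat\sigma_A$-invariant compact set, monotonicity of topological entropy together with Gurevich's variational formula $h_{\mathrm G}(\sigma_A)=\sup_Y h_{\mathrm{top}}(\sigma_A|_Y)$ yields $h_{\mathrm{top}}(\widehat\sigma_A)\ge h_{\mathrm G}(\sigma_A)$. Monotonicity of $ht(\cdot)$ under restriction to invariant subalgebras then finishes this half: $ht(\gamma_A)\ge ht(\gamma_A|_{\mathcal D_A})=h_{\mathrm{top}}(\widehat\sigma_A)\ge h_{\mathrm G}(\sigma_A)$.

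For the upper bound I would estimate directly with Voiculescu--Brown completely positive rank. A standard reduction (the gauge variable contributing no entropy, and $\mathcal O_A$ being exact since Exel--Laca algebras are nuclear) lets me assume the test set $\omega$ lies in a finite-dimensional block of the AF core $\mathcal F_A=\overline{\bigcup_k\mathcal F_A^{(k)}}$, where $\mathcal F_A^{(k)}=\overline{\mathrm{span}}\{\,s_\alpha s_\beta^*:|\alpha|=|\beta|=k,\ r(\alpha)=r(\beta)\,\}$. Then $\gamma_A^{\,j}(\omega)\subseteq\mathcal F_A^{(k+j)}$ and, writing $\gamma_A^{\,j}(s_\alpha s_\beta^*)=\sum_\mu s_{\mu\alpha}s_{\mu\beta}^*$ over admissible length-$j$ prefixes $\mu$, the point is that the KMS state $\varphi$ for the gauge action — whose existence forces the relevant inverse temperature to be $h_{\mathrm G}(\sigma_A)$ — assigns to the summands $\varphi$-mass of order $e^{-h_{\mathrm G}(\sigma_A)|\mu\alpha|}$ times a Perron-eigenvector factor, so that $\sum_\mu\varphi(s_{\mu\alpha}s_{\mu\alpha}^*)\le 1$ and all but finitely many terms are negligible; hence the sum can be truncated within $\delta$. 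Compressing $\mathcal F_A^{(k+n)}$ to the span of the surviving matrix units produces a finite-dimensional C$^*$-algebra $B_n$ through which $\mathrm{id}$ on $\omega\cup\gamma_A(\omega)\cup\cdots\cup\gamma_A^{\,n-1}(\omega)$ factors to within $\delta$, with $\dim B_n\le C\,e^{(h_{\mathrm G}(\sigma_A)+\vep)n}$; letting $n\to\infty$ and then $\vep\downarrow 0$ gives $ht(\gamma_A)\le h_{\mathrm G}(\sigma_A)$.

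The hard part will be this upper bound in the non-locally-finite case, and this is exactly where the conditions {\tt (SH)}--{\tt (O)} are needed. When $A$ is locally finite the sums defining $\gamma_A^{\,j}$ are finite and the number of admissible length-$n$ words emanating from a bounded state set grows at exponential rate at most $h_{\mathrm G}(\sigma_A)$ automatically; the renewal shift, where a single state receives infinitely many arrows, shows that neither fact survives in general. Turning the heuristic ``discard the negligible tail'' into a genuine completely positive approximation requires showing that the operator-norm error of dropping $\sum_{\mu\ \mathrm{far}}s_{\mu\alpha}s_{\mu\beta}^*$ is uniformly small over $j\le n$ with a total error budget that does not accumulate, and that the count of surviving words still obeys the rate $h_{\mathrm G}(\sigma_A)$ rather than exploding along the infinite columns; conditions {\tt (SH)}, {\tt (FS)}, {\tt (AF)}, {\tt (SI)}, {\tt (SD)} and {\tt (O)}, together with the quantitative control supplied by the KMS state, are what make both of these work, and verifying them carefully is the technical heart of the argument. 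The remaining ingredients — monotonicity of $ht(\cdot)$, Voiculescu's formula in the commutative case, Gurevich's theorem, the AF structure of $\mathcal F_A$, and the ``gauge adds no entropy'' reduction — are by now routine once the groupoid and AF descriptions of $\mathcal O_A$ under the standing hypotheses are in hand.
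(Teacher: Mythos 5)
Your lower bound is sound but follows a different route from the paper. You pass through $ht(\gamma_A|_{\mathcal D_A})=h_{\mathrm{top}}(\hat\sigma_A)$ (Voiculescu's formula on the compact metrizable character space $X_A$, which is legitimate here since $\mathcal D_A$ is unital and separable) and then use $h_{\mathrm G}(\sigma_A)=\sup_Y h_{\mathrm{top}}(\sigma_A|_Y)$ over finite subalphabet subsystems. The paper instead runs the measure-theoretic variational principle through the Sauvageot--Thouvenot entropy: for each $\sigma_A$-invariant $\mu$ it builds a $(\hat\sigma_A)_*$-invariant state $\varphi_\mu$ on $C(X_A)$ with $h_\mu(\sigma_A)\le h_{\varphi_\mu}((\hat\sigma_A)_*)\le ht((\hat\sigma_A)_*)$. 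Your route is essentially the one the authors record in their Appendix on Matsumoto algebras, so this half is fine.

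The upper bound, however, has a genuine gap exactly at the step you yourself identify as the heart of the matter, and the hypotheses do not rescue the argument in the form you propose. You want to truncate $\gamma_A^{\,j}(S_\alpha S_\beta^*)=\sum_\mu S_{\mu\alpha}S_{\mu\beta}^*$ by discarding the $\mu$ of small KMS mass. But $rcp$ is an operator-norm notion, and the nonzero summands $S_{\mu\alpha}S_{\mu\beta}^*$ are partial isometries with mutually orthogonal ranges and mutually orthogonal supports, so \emph{any} nonempty discarded tail satisfies $\bigl\|\sum_{\mu\ \mathrm{far}}S_{\mu\alpha}S_{\mu\beta}^*\bigr\|=1$ no matter how small its $\varphi$-mass is; no finite truncation approximates $\gamma_A^{\,j}(S_\alpha S_\beta^*)$ within $\delta<1$ in norm when $A$ is not locally finite. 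The paper's mechanism is entirely different: it never truncates. Using the $A$-orthogonal division supplied by {\tt (PCF)}/{\tt (O)}, it lumps each infinite tail into the finitely many partial isometries $S_{(k,n;l_k)}=\sum_{\alpha\in H^{(l_k)}\setminus N(k,n)}S_\alpha$, obtaining the \emph{exact} resolution of the identity $\sum_{\alpha\in\overline{L(m,n)}}S_\alpha S_\alpha^*=1$; the $\delta$-rank is then bounded by the cardinality $\overline{w(m,n)}=\sum_p|L_{m-p}(m,n)|P_A^p$, whose exponential growth rate is forced down to $h_{\mathrm G}(\sigma_A)$ by {\tt (AF)}, {\tt (SI)}, {\tt (SD)} and $\log P_A\le h_{\mathrm G}(\sigma_A)$ -- this combinatorial control is where those conditions actually enter, not in any tail-discarding estimate. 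Relatedly, the ``gauge variable contributes no entropy'' reduction is not routine here: the passage from the fixed-point algebra to all of $\mathcal O_A$ is where the KMS state is genuinely used, namely to guarantee $\sum_j S_jS_j^*=1$ strongly in the GNS representation so that the lumping identity survives there, combined with a conditional expectation onto the core and a F{\o}lner function on $\mathbb Z$ in the style of Boca--Goldstein and Matsumoto; the KMS state plays no role as a word-weighting device, and its inverse temperature is not used to equal $h_{\mathrm G}(\sigma_A)$.
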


All terminologies in Theorem \ref{thm:main} will be explained in Sections \ref{s:def}, \ref{s:can} and \ref{s:main}.
In particular, we define the canonical completely positive map $\gamma _A$ in Section \ref{s:can}, which is far from trivial in our setting where $A$ is not necessarily locally finite. 
We will give our main theorem in a more complete form in Section \ref{s:main}, as well as several interesting questions for future research in Section \ref{s:problems}.
\begin{remark}\label{rem:1227c}
After we completed the draft version of the proof of Theorem \ref{thm:main}, we learned that in a series of papers \cite{JP04,JP06,JP09} Jeong and Park studied the relation between the block and loop entropies of a directed graph and the non-commutative entropy of the associated graph C$^*$-algebra.
Using the $*$-isomorphism between the Exel--Laca algebra and its associated graph C$^*$-algebra (cf.~\cite[Proposition 4.8]{Renault00}), one may get that 
\[
h_{\mathrm{G}}(\sigma _A) \le ht (\gamma _A) \leq h_{b}(A^{tr})
\]
 under their assumptions on $A$, including local finiteness of $A$, where $h_{b}(A^{tr})$ is the (Salam) block entropy of the graph induced by the transpose matrix $A^{tr}$ of $A$.

Although the works by Jeong and Park are important and overlap a part of our result, the present paper includes several novelties essentially because they always assumed that $A$ is locally finite, while we do not. 
Throughout this paper, one may realize that the absence of (local) finiteness requires one to investigate a quite newer technology, even in the definition of the canonical completely positive map.
 An application after Theorem \ref{thm:main} is the renewal shift, an important non-locally finite class appearing in the study of intermittent dynamics (cf.~\cite{Sarig2001}).

We also remark that as a by-product of the proof of Theorem \ref{thm:main}, we give new information on the difference between $h_G(\sigma _A)$ and $h_b(A^{tr})$.\footnote{ 
In Examples 3.8 and 3.9 of \cite{JP09}, it was shown that $h_l(A) < h_b(A^{tr})$ holds for some matrix $A$ representing a locally finite graph allowing multi-edges from a vertex to another vertex, where $h_{l}(A)$ is the (Gurevich) loop entropy of the graph induced by $A$. 
It holds that $h_l(A) = h_G(\sigma _A)$ when $A$ is a transition matrix, but their matrix $A$ is not so.}
In fact, in Remark \ref{rm:1227b}, we show that $h_G(\sigma _A) \le h_b(A^{tr}) \le h_G(\sigma _A) + \mathcal I$ with a quantity $\mathcal I$ associated with $A$ (refer to Definition \ref{dfn:SI}), as well as giving a locally finite transition matrix $A$ with $\mathcal I>0$.
 The condition $\mathcal I=0$ is called {\tt (SI)} in Theorem \ref{thm:main}, so {\tt (SI)} is also a sufficient condition for $ht(\gamma _A) = h_G(\sigma _A) $ in the locally finite case.
\end{remark}

\section{Preliminaries}
\label{s:def}
In this section, we prepare some terminologies that appeared in Theorem \ref{thm:main}.
\subsection{Countable Markov shifts}
Let $D$ be a countable set. 
Denote by $D^{\mathbb N}$ the one-sided infinite product of $D$. 
Let $\sigma$ be the left shift operation of $D^{\mathbb N}$ (i.e.~$(\sigma (x))_j = x_{j+1}$ for each $j\in \mathbb N$ and $x= \{x_j\}_{j\in \mathbb N} \in D^{\mathbb N}$). 
When a subset  $\Sigma $ of $D^{\mathbb N}$ is $\sigma$-invariant and  closed, we call it a \emph{subshift}, and $D$  the \emph{alphabet} of $\Sigma $.
We equip $\Sigma $ with the topology generated by the base of cylinder sets 
\[
[x]_n\coloneqq \{ \{y_j\}_{j\in\mathbb N}\in \Sigma   : x_j =y_j \; \text{for all $j=1, 2,\ldots ,n$} \}
\]
 over $x=\{x_j\}_{j\in\mathbb N}\in \Sigma $ and $n\in \mathbb N$.
We define the set of \emph{admissible} finite words 
\[
\Sigma ^*\coloneqq \bigcup _{n\geq 0} \left\{ \{x_j\}_{j=1}^n\in D^n : \text{$\exists y= \{y_j\}_{j\in \mathbb N} \in \Sigma$ such that $x_j =y_j$ for all $1\leq j\leq n$}\right\},
\]
where $\{x_j\}_{j=1}^0$ means the empty word, denoted by $e$.
We write $\vert x \vert \coloneqq  n$ and $t(\alpha)\coloneqq  x_n$ for each $x= \{x_j\}_{j=1}^n\in \Sigma ^*$, and write $\vert e\vert \coloneqq 0$ and $t(e)\coloneqq 1$.
The cylinder set over an admissible finite word $x= \{x_j\}_{j=1}^n$ is also denoted by $[x]$.
Namely,
\[
[x] \coloneqq \{ \{y_j\}_{j\in\mathbb N}\in \Sigma   : x_j =y_j \; \text{for all $j=1, 2,\ldots ,n$} \}.
\]

When 
 $\Sigma $ is of the form
\[
\Sigma  =\{ x\in D^{\mathbb N} : \text{$A(x_j, x_{j+1}) =1 $ for all $j\in \mathbb N$}\}
\]
with  a $0$-$1$ matrix $A= \{A(i,j)\}_{(i,j)\in D^2}$ (that is,  each entry of which is $0$ or $1$), we call $\Sigma $ a \emph{Markov shift}.
When we emphasize the dependence of $\Sigma $ on $A$, it is denoted by $\Sigma _A$, and $A$ is called the \emph{transition matrix} of $\Sigma _A$.
We  denote the restriction of $\sigma$ on $\Sigma _A$ by $\sigma_A$.
Observe that $\Sigma _A^*=\bigcup _{n\geq 0} \left\{ \{x_j\}_{j=1}^n\in D^n : \text{$A(x_j, x_{j+1}) =1$ for all $1\leq j\leq n-1$}\right\}.$

We say that a $0$-$1$ matrix $A=\{A(i,j)\}_{(i,j)\in D^2}$ is \emph{finite} if $\vert D\vert <\infty$, where $\vert B\vert$ denotes the cardinality of a set $B$. 
Moreover, $A$ is said to be \emph{locally finite} if for any $i\in D$, $\vert \{j\in D : A(i,j)=1\}\vert <\infty$ and $\vert \{j\in D : A(j,i)=1\} \vert <\infty$.
If in addition the numbers are bounded uniformly with respect to $i\in D$, then $\Sigma _A$ is said to be \emph{uniformly locally finite}.
An important example of countable Markov shift induced by a matrix being not locally finite is the \emph{renewal shift} $\Sigma _A$, given as $D=\mathbb N$ and 
\begin{equation}\label{eq:renewal}
A(i,j) =
\begin{cases}
1 \quad &  (\text{$i=1$ or $j=i-1$})\\
0 \quad & (\text{otherwise})
\end{cases}.
\end{equation}
The renewal shift naturally appears in the study of intermittent dynamics, and its thermodynamic formalism, including the phase transition, was intensively studied by Sarig \cite{Sarig1999,Sarig2001}.
Furthermore, recently it was sharpened by  Bissacot et al.~\cite{BEFR2018} from the viewpoint of C$^*$-algebras. 
As another application of our main result, we also consider
\begin{equation}\label{eq:lazyrenewal}
A(i,j) =
\begin{cases}
1 \quad &  (\text{$i=1$ or $j\in \{ i-1, i\}$})\\
0 \quad & (\text{otherwise})
\end{cases}
\end{equation}
with $D=\mathbb N$, that we call the \emph{lazy renewal shift} (named after the lazy random walk).
    \begin{figure}[h]
    \centering
        \includegraphics[width=12truecm]{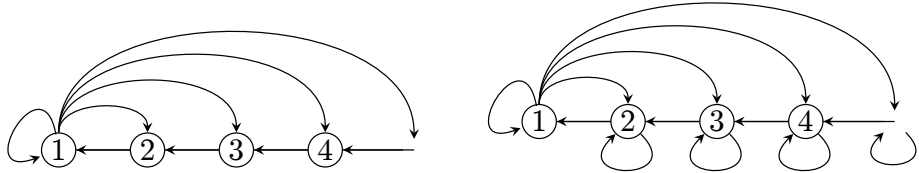}
        \vspace{0.4cm}
        \caption{{\rm Renewal shift (left); lazy renewal shift (right)}}\label{fig:lazyrenewal}
    \end{figure}

\subsection{Gurevich topological entropy}\label{ss:Ge}
It is well known that the classical definition of topological entropy $h_{\mathrm{top}}(\sigma ) $ of a continuous map $\sigma :X\to X$   on a   topological space $X$ (in the sense of the original definition by Adler, Konheim and McAndrew \cite{AKM1965}) does not behave well if $X$ is not compact.
In fact, it follows from Theorem 1.3 of \cite{HNP2008} that if  there is a point $x\in X$ whose orbit $\{ \sigma ^n(x)\} _{n=0}^\infty$  does not have an accumulation point (e.g.~$\sigma $ is the renewal shift), then $h_{\mathrm{top}}(\sigma ) =\infty$.

A useful definition of topological entropy for countable Markov shifts was given by Gurevich \cite{Gurevich1969} from the viewpoint of the variational principle, and its properties were later intensively studied by Sarig \cite{Sarig1999}:
If $\sigma _A: \Sigma_A\to \Sigma_A$ is topologically mixing,\footnote{A continuous map $\sigma : X\to X$ on a metric space is said to be \emph{topologically mixing} if for any nonempty open sets $U, V\subset X$ one can find $N\in \mathbb N$ such that $U\cap \sigma ^{-n} V\neq \emptyset$ for any $n\geq N$.} then the following holds (cf.~\cite{Sariglecturenote}).
\begin{itemize}
\item The \emph{Gurevich topological entropy} $h_{\mathrm{G}}(\sigma _A)$ of $\sigma _A$, given by
\[
h_{\mathrm{G}}(\sigma _A)\coloneqq  \lim _{n\to\infty} \frac{1}{n} \log  \sum _{\sigma _A^n(x)=x} 1_{[a]_1}(x)
\]
for some $a\in \Sigma _A$, exists and is independent of the choice of $a$.
\item The variational principle holds:
\[
h_G(\sigma _A)= \sup \left\{ h_\mu (\sigma _A) \mid \text{$\mu$: $\sigma_A$-invariant Borel probability measure}\right\},
\]
where $h_\mu (\sigma _A)$ is the metric entropy of $\mu$.
\end{itemize}

\subsection{Non-commutative topological entropy}\label{subsec23}
Let $\mathcal O$ be an exact C$^*$-algebra and $\pi : \mathcal O \to B(\mH)$ a faithful $*$-representation of $\mathcal O$ on the set $B(\mH)$ of bounded linear operators on a Hilbert space $\mH$.
Given a finite subset $\omega \subset \mathcal O$ and $\delta >0$, the completely positive $\delta$-rank $rcp(\omega ;\delta ) \equiv rcp_\pi (\omega ;\delta )$ of $\omega$ is given by
\begin{multline*}
rcp(\omega ;\delta )=\min \Big\{ N\in\mathbb N : \text{there exists $(\phi , \psi , \mathcal A) \in CPA(\mathcal O)$ such that} \\
\mathrm{rank}\, \mathcal A =N, \quad \Vert \psi \circ \phi (S) -\pi (S) \Vert <\delta \; \text{for all $S\in \omega$} \Big\},
\end{multline*}
where $CPA(\mathcal O)$ is the set of triples $(\phi , \psi , \mathcal A)$ of a finite-dimensional C$^*$-algebra $\mathcal A$ and cp (i.e.~completely positive) maps $\phi :\mathcal O\to \mathcal A$, $\psi :\mathcal A\to B(\mH)$, $ \mathrm{rank}\, \mathcal A$ is the dimension of a maximal abelian C$^*$-subalgebra of $\mathcal A$, and $\Vert \cdot \Vert$ is the norm on $B(\mH )$.
Standard references for non-commutative entropy are e.g.~\cite{RS2013, NS2006}.
By Kirchberg--Wassermann's nuclear embedding of exact C$^*$-algebra (cf.~\cite{W1994}), there exists a faithful $*$-representation $\pi : \mathcal O \to B(\mH )$ such that $rcp _\pi (\omega ;\delta )<\infty$ for all finite subset $\omega \subset \mathcal O$ and $\delta >0$.
Furthermore, it is observed by Brown \cite{B1999} that $rcp(\omega ;\delta )$ is independent of the choice of $\pi$.

Let $\gamma :\mathcal O \to \mathcal O$ be a cp map. 
The non-commutative topological entropy $ht(\gamma )$ of $\gamma$ is given by
\[
ht(\gamma ) = \sup_\omega ht(\gamma ,\omega ), \qquad ht(\gamma ,\omega ) = \sup _{ \delta >0} \limsup _{n\to \infty} \frac{1}{n} \log rcp\left( \bigcup _{j=0}^{n-1} \gamma ^j (\omega ); \delta \right),
\]
where the supremum is taken over all finite subsets $\omega$ of $\mathcal O$. 
The basic properties of the non-commutative topological entropy are as follows, refer to \cite{BG2000, NS2006}.
\begin{itemize}
\item If $\mathcal O_0$ is a subalgebra of $\mathcal O$ such that $\gamma (\mathcal O_0)\subset \mathcal O_0$, then $ht (\gamma \vert _{\mathcal O_0}) \le ht (\gamma )$.
\item If $\{\omega _j\}_{j\in\N}$ is an increasing sequence of finite subsets of $\mathcal O$ (i.e.~$\omega _j\subset \omega _{j+1}$ for any $j\in \N$) and the norm closure of the linear span of $\bigcup _{j, n\in \N} \gamma ^n (\omega _j)$ equals to $\mathcal O$, then $ht(\gamma ) =\sup _{j\in \N} ht(\gamma ,\omega _j).$
\item If $\theta$ is an automorphism of $\mathcal O$, then $ht (\theta \circ \gamma \circ \theta ^{-1}) = ht (\gamma )$.
\item Given a continuous map $\sigma : X\to X$ on a compact metric space $X$, if $\mathcal O=C(X)$ and $\gamma (f)=f\circ \sigma$ for each $f\in \mathcal O$, then $ht (\gamma ) =h_{\mathrm{top}}(\sigma )$.
\end{itemize}

\subsection{KMS state and GNS representation}
Let $\mathcal O$ be a unital C$^*$-algebra and $\widehat\Gamma =\{\widehat\Gamma _t\}_{t\in \mathbb R}$ a strongly continuous one-parameter group of *-authmorphisms of $\mathcal O$.
A state $\varphi$ on $\mathcal O$ (i.e.~a positive linear functional on $\mathcal O$ with $\varphi (1)=1$) is called a \emph{KMS$_\beta$ state} for $\widehat\Gamma$ with an inverse temperature parameter $\beta >0$ if
\[
\varphi (ST)=\varphi \left(T \, \widehat \Gamma_{i\beta }\left(S\right)\right)
\]
for any entire analytic element $S\in\mathcal O$ (i.e.~the map $t\mapsto \widehat\Gamma _t(S)$ from $\mathbb R$ to $\mathcal O$ extends to an entire analytic function) and any $T\in\mathcal O$.
Furthermore, given a group action $\Gamma :\mathbb T\acts \mathcal O$, a state $\varphi$ on $\mathcal O$ is called a KMS state for $\Gamma$ if $\{ \Gamma _{it}\}_{t\in \mathbb R}$ is a KMS$_\beta$ state for some $\beta >0$.
KMS states can be seen as a generalization of Gibbs states and played an important role in the context of quantum statistical mechanics, refer to e.g.~\cite{NS2006}. 
Furthermore, KMS states were recently used in \cite{BEFR2018} to sharpen the phase transition about conformal measures for countable Markov shifts.

Given a state $\varphi$ on $\mathcal O$, one can introduce an inner product $\langle \cdot , \cdot \rangle  $ on $\mathcal O/\mathcal N _\varphi$, where $\mathcal N _\varphi \coloneqq \{ S\in\mathcal O : \varphi (S^* S)=0\}$, by $\langle S+ \mathcal N _\varphi , T+\mathcal N _\varphi  \rangle \coloneqq \varphi (S^* T)$.
Denote by $\mathcal H_\varphi$ the Hilbert space given by the completion of $\mathcal O/\mathcal N _\varphi$.
Define a *-homomorphism $\pi _\varphi : \mathcal O\to B(\mathcal H_\varphi )$ by $\pi (S)(T+\mathcal N_\varphi )= ST+\mathcal N_\varphi $.
Then, the pair $(\pi _\varphi , \mathcal H_\varphi )$ is called the \emph{GNS representation} of $\varphi$.
The GNS representation of a KMS state is a fundamental (von Neumann algebraic) approach in operator algebras theory, refer to \cite{BR2012}.

\subsection{Exel--Laca C$^*$ algebra}\label{ss:EL}
For a 0-1 matrix $A=\{A(i,j)\}_{(i,j)\in D^2}$ with a countable symbol $D$, the \emph{Exel--Laca algebra} $\mO_A$ associated with $A$ is the universal unital C$^*$-algebra generated by partial isometries $\{S_j\}_{j\in D}$ satisfying the following condition:
\begin{equation}\label{EL1}
(S^* _iS_i)( S^* _j S_j) = (S^* _j S_j)(S^* _iS_i );
\end{equation}
\begin{equation}\label{EL2}
S^* _iS_j = \delta_{i, j}S^* _iS_i ;
\end{equation}
\begin{equation}\label{EL3}
(S^* _iS_i)S_j = A(i, j)S_j ;
\end{equation}
\begin{equation}\label{EL4}
\prod_{i \in X} S^* _iS_i \prod_{j \in Y} (1 - S^* _j S_j) = \sum_{k=1}^\infty A(X, Y, k)S_kS^*_k  
\end{equation}
for any $i, j \in D$ and any finite subset $X, Y \subset D$ such that 
\[
\left|\{ k \in D : A(X, Y, k) =1 \}\right| < \infty, 
\]
where
\[
A(X, Y, k)=
\begin{cases}
1 \quad &(\text{if $A(i,k) =1$, $A(j,k) =0$ for all $i\in X$, $j\in Y$)}\\
0 \quad &(\text{otherwise})
\end{cases}.
\]
Exel and Laca first introduced the C$^*$-algebra $\mO_A$ in \cite{EL1999}.
In the paper, they also considered the universal \textit{unital} C$^*$-algebra $\wmO_A$ generated by isometries $\{ S_j\}_{j\in D}$ and showed $\mO _A \simeq \wmO_A$ if and only if $\mO_A$ is unital.
However, we only consider $\mO _A$ because it is straightforward to see that if $\mO_A$ is non-unital and {\tt (FS)} (assumed in Theorem \ref{thm:main}; refer to Definition \ref{dfn:fs}) holds, then $A$ is row finite,\footnote{It follows from Lemma \ref{finsum} that if $R_j^c$ is a finite set for some $j\in \N$, then $1=S^*_jS_j + \sum _{k \in R_j^c} S_k S_k^*$. Since the sum in the right-hand side is a finite sum, this implies that $\mO_A$ is unital. Thus, if $\mO_A$ is non-unital and {\tt (FS)} holds, then $R_j$ is a finite set for any $j\in \N$.\label{f:3a}}
so that $A$ is locally finite under {\tt (UCF)} (refer to Definition \ref{dfn:ucf}).

The basic properties of Exel--Laca algebras are as follows, refer to \cite{EL1999, Raszeja}.
\begin{itemize}
\item If $A$ is a finite matrix, then $\mO_A$ is isomorphic to the Cuntz--Krieger algebra associated with $A$.
\item $\mO_A$ is nuclear (in particular, exact).
\item If $A$ is an infinite matrix, has no zero rows (that is, there does not exist an $i\in D$ such that $A(i,j) =0$ for any $j \in D$), and $\sigma _A:\Sigma _A\to \Sigma _A$ is mixing, then the Exel--Laca algebra $\mathcal O_A$ is simple and unique.
\item The norm closure of a *-subalgebra $\mathcal{A}_A$ of $\mO_A$, given by
\[
\mathcal{A}_A \coloneqq \mathrm{span}\left\{ S_\alpha \left( \prod_{i \in F}S^* _i S_i \right)S^* _\beta : \alpha, \beta \in \Sigma _A^*, \; F \subset D \ \text{is a finite subset} \right\},
\]
where $S_e\coloneqq 1$ and $S_\alpha \coloneqq  S_{\alpha _1}S_{\alpha _{2}} \cdots S_{\alpha _n}$ for $\alpha =\{ \alpha _j\}_{j=1}^n$, equals to $\mO_A$.
\end{itemize}
Notice also that the mixing property of $\sigma _A: \Sigma _A\to \Sigma _A$ immediately implies that $A$ has no zero rows and has no zero columns (i.e.~there does not exist an $j\in D$ such that $A(i,j) =0$ for any $i \in D$).
Furthermore, as previously mentioned, it is already shown in \cite{BG2000} that $ht(\gamma _A) = h_{\mathrm{top} }(\Sigma _A)$ if $A$ is a finite matrix, where $\gamma _A$ is the canonical ucp map (see Section \ref{s:can}) on the Cuntz--Krieger algebra of $A$.
Taking them and the properties in Sections \ref{ss:Ge} and \ref{ss:EL} into account, similar to \cite{Raszeja}, we hereafter assume the following condition.
\begin{dfn}
A $0$-$1$ matrix $A=\{A(i,j)\}_{(i,j)\in D^2}$ is said to satisfy the \textit{standing hypothesis} {\tt (SH)} if $D=\mathbb N$,  $\sigma _A:\Sigma _A\to \Sigma _A$ is mixing and $\mO_A$ is unital.
\end{dfn}

\section{Canonical ucp map on the Exel--Laca algebra}\label{s:can}
In this section, we define a ucp (i.e.~unital cp) map $\ga_A \colon \mO_A \to \mO_A$ corresponding to $\sigma _A:\Sigma _A\to \Sigma_A$ for a $0$-$1$ matrix $A=\{A(i,j)\}_{(i,j)\in \N ^2}$, as well as providing a part of the definitions of the assumptions in Theorem \ref{thm:main}.
First, we aim to define
\begin{equation}\label{eq:can1}
\ga_A (S_j S^*_j)\coloneqq \sum_{i=1}^\infty S_iS_jS^* _j S^* _i
\end{equation}
for each $j \in \N$ (which naturally appears in the study of $\sigma _A$ when respecting $\mathcal O_A$, as indicated in Section \ref{subsec31}).
Since there is a faithful representation $\pi :\mO_A \to B(\mH)$ on a Hilbert space $\mH$, one can define the infinite sum $\sum_{i=1}^\infty S_iS_jS_j ^*S_i ^*$ using the strong operator topology on  $B(\mH)$.
Although this definition looks natural, we might not have $\ga_A(S_iS_i ^*) \in \mO_A$ in general since $\mO_A$ is not closed in the strong operator topology.
To avoid treating infinite sums, the following condition plays an indispensable role.
For notational simplicity, we write
\[
C_j \coloneqq  \{ i \in \N :  A(i, j) =1\}, \quad R_j \coloneqq  \{ i \in \N : A(j, i) = 1\}.
\]
\begin{dfn}\label{dfn:ucf}
A $0$-$1$ matrix $A=\{A(i,j)\}_{(i,j)\in \mathbb N^2}$ is said to be \textit{uniformly column finite} {\tt (UCF)} if we have
\[
M_A\coloneqq \max_{j \in \N}|C_j| < \infty.
\]
\end{dfn}
An important property of a uniformly column finite 0-1 matrix $A$ is that $h_G(\sigma _A)<\infty $, refer to \cite{Sariglecturenote}.

We introduce the canonical gauge action $\Ga \colon \T \acts \mO_A$ given by
\[
\Ga_z (S_j)\coloneqq zS_j
\]
for each $z \in \T$ and $j \in \N$.  
Note that for any $z\in \T$ the sequence of isometries $\{ zS_j\}_{j \in \N}$ satisfies the equations \eqref{EL1}-\eqref{EL4}.
Hence,
$\Ga_z$ is a well-defined surjective $*$-endomorphism on $\mO_A$.
In fact, since $\mO_A$ is simple due to {\tt (SH)},  $\Ga_z$ is an automorphism.
Write 
\[
(\mO_A)^\Ga \coloneqq \{ T \in \mO_A : \Ga_z(T)=T \ \text{for any} \ z \in \T \}. 
\]
We have $\Ga_z(S_\alpha S^* _\beta) = z^{|\alpha| - |\beta|}S_\alpha S^* _\beta$ for any $\alpha, \beta \in \Sigma _A^*$ and $z \in \T$.
Thus, for any $\alpha, \beta\in \Sigma _A^*$, we see that $S_\alpha S^* _\beta \in (\mO_A)^\Ga$ if and only if $|\alpha| = |\beta|$.
Hence, since $\mathcal O_A$ is the norm closure of $\mathcal A_A$, we get
\begin{equation}\label{eq:1221b}
\left(\mO_A\right)^\Ga = \overline{\text{span}}\left\{ S_\alpha \left( \prod_{i \in F}S^* _i S_i \right)S^* _\beta : \alpha, \beta \in \Sigma _A^*, \; |\alpha| = |\beta|, \; F \subset \N \ \text{is a finite subset} \right\}.
\end{equation}
Under this observation, we introduce the commutative C$^*$-subalgebra $\mathcal D_A$ of $\mathcal O_A$ by
\[
\mathcal D_A = \overline{\text{span}}\left\{ S_\alpha \left( \prod_{i \in F}S^* _i S_i \right)S^* _\alpha : \alpha  \in \Sigma _A^*, \; F \subset \N \ \text{is a finite subset} \right\}.
\]

To define $\ga_A (S^* _j S_j)$ by \eqref{eq:can1} for each $j\in \N$, we need a condition under which $S^*_j S_j$ can be expressed as a linear combination of $1 \in \mO_A$ and a finite sum of $S_iS^* _i$'s.
Denote the complement of a subset $B\subset \mathbb N$ by $B^c$.

\begin{lem}\label{finsum}
For any $j\in \N$, the following holds.
\begin{enumerate}
\item
If $R_j$ is a finite set, then one has
\[
S^* _jS_j = \sum_{k\in R_j}S_kS^* _k.
\] 
\item
If $R_j ^c$ is a finite set, then one has
\[
S^* _jS_j = 1 - \sum_{k\in R_j ^c}S_kS^* _k.
\] 
\end{enumerate}
\end{lem}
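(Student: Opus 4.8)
The plan is to exploit the Exel--Laca relation \eqref{EL4} with carefully chosen finite sets $X$ and $Y$, after first rewriting $S_j^*S_j$ in a convenient form. Since $S_j$ is a partial isometry, $S_j^*S_j$ is the projection onto the initial space of $S_j$; by relation \eqref{EL3} we have $(S_j^*S_j)S_k = A(j,k)S_k$, so $S_k S_k^* \le S_j^*S_j$ whenever $A(j,k)=1$, i.e.\ whenever $k \in R_j$. The orthogonality of the ranges of the $S_k$ (which follows from \eqref{EL2}, since $S_k^*S_\ell = \delta_{k,\ell}S_k^*S_k$ forces $S_kS_k^*\cdot S_\ell S_\ell^* = 0$ for $k\ne\ell$) shows that the projections $\{S_kS_k^*\}_{k\in R_j}$ are mutually orthogonal and all dominated by $S_j^*S_j$, so in all cases $\sum_{k\in R_j}S_kS_k^* \le S_j^*S_j$ as projections. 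The content of the lemma is that equality holds (case~(1)), or the dual statement with complements (case~(2)); this is exactly where the finiteness hypothesis and relation \eqref{EL4} come in.

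For case~(1), the idea is to apply \eqref{EL4} with $X = \{j\}$ and $Y = \emptyset$. Then the empty product $\prod_{i\in Y}(1 - S_i^*S_i)$ is the identity $1$, the left-hand side becomes $S_j^*S_j$, and on the right $A(\{j\},\emptyset,k) = A(j,k)$ by definition (the vacuous condition over $j\in Y$ is automatically satisfied), so the set $\{k : A(\{j\},\emptyset,k)=1\} = R_j$, which is finite by hypothesis. Hence \eqref{EL4} is applicable and yields $S_j^*S_j = \sum_{k\in R_j} S_kS_k^*$, as claimed.

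For case~(2), the dual choice is $X = \emptyset$ and $Y = \{j\}$. Now $\prod_{i\in X}S_i^*S_i = 1$, the left-hand side of \eqref{EL4} is $1 - S_j^*S_j$, and $A(\emptyset,\{j\},k) = 1$ precisely when $A(j,k)=0$, i.e.\ when $k\in R_j^c$; this set is finite by hypothesis, so \eqref{EL4} applies and gives $1 - S_j^*S_j = \sum_{k\in R_j^c}S_kS_k^*$, which rearranges to the stated identity. The only subtlety to check carefully is that the definition of $A(X,Y,k)$ handles the empty sets correctly — that a conjunction over an empty index set is true — and that the finiteness condition $|\{k : A(X,Y,k)=1\}| < \infty$ required to invoke \eqref{EL4} is exactly the hypothesis in each case; this is routine but should be spelled out. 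No deeper obstacle is expected: the whole lemma is essentially a bookkeeping specialization of \eqref{EL4} to singleton sets.
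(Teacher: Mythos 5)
Your proposal is correct and follows exactly the paper's own argument: both cases are obtained by specializing relation \eqref{EL4} to $X=\{j\}$, $Y=\emptyset$ and to $X=\emptyset$, $Y=\{j\}$ respectively, with the finiteness hypothesis on $R_j$ (resp.\ $R_j^c$) being precisely what licenses the application of \eqref{EL4}. The extra observations about orthogonality of the range projections are harmless but not needed.
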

\begin{proof}
Apply \eqref{EL4} to $X= \{j\}$ and $Y= \emptyset$.
Then we have
\[
S^* _jS_j = \sum_{k\in R_j}S_kS^* _k,
\]
that is, (1) is proven.

Apply \eqref{EL4} to $X= \emptyset$ and $Y= \{ j \}$.
Then 
\[
1 - S^* _jS_j = \sum_{k\in R^ c _j}S_kS^* _k.
\]    
This immediately implies (2).
\end{proof}

Consider the canonical faithful $*$-representation $\pi : \mO_A\to B(l^2(\Sigma _A))$ of $\mO _A$,\footnote{The canonical representation $\pi : \mO_A\to B(l^2(\Sigma _A))$ is given by $\pi (S_i) =T_i$ ($i\in \mathbb N$) with 
\[
T_i(\delta _x) = 
\begin{cases}
\delta _{ix} \quad &(A(i, x_1) =1)\\
0 & (\text{otherwise})
\end{cases}, \quad 
T_i^*(\delta _x) = 
\begin{cases}
\delta _{\sigma (x)} \quad &(x_1 =i)\\
0 & (\text{otherwise})
\end{cases}
\]
for $x=\{ x_j\}_{j=1}^m\in \Sigma _A$, see \cite{Raszeja}. \label{footnote_3}} 
and simply write $S_j$ for $\pi (S_j)$.
Then we have 
\begin{equation}\label{eq:1111d}
\sum_{i=1}^\infty S_iS^* _i= 1_{B(l^2(\Sigma_A))}
\end{equation}
in the strong operator topology on $B(l^2(\Sigma_A))$.
Respecting this, we define $\ga_A(1)\coloneqq 1$ later.
If $A$ is uniformly column finite, then $\ga_A(\sum_{j \in F}S_jS^*_j)$ is given by a finite sum for any finite subset $F \subset \N$.
Thus, for any $j \in \N$ with $\min\{|R_j|, |R_j ^c|\} < \infty$, one can define $\ga _A(S^* _jS_j)$ using Lemma \ref{finsum} and $\ga_A(1)\coloneqq 1$.
Therefore, the following condition allows us to define $\ga_A(S^*_j S_j)$ for each $j \in \N$.

\begin{dfn}\label{dfn:fs}
A natural number $j $ is said to be an \textit{$A$-finite emitter} (resp.~an \textit{$A$-infinite emitter}) if $R_j$ (resp.~$R_j ^c$) is a finite set.
We say that $A$ is \textit{finitely summable} {\tt (FS)} if 
\[
\N = \{ j \in \N : j \ \text{is an $A$-finite emitter} \} \cup \{ j \in \N : j \ \text{is an $A$-infinite emitter} \}.
\]
\end{dfn}

Notice that under {\tt (FS)}, $\mathcal O_A$ is unital if and only if there is an $A$-infinite emitter (refer to footnote \ref{f:3a}).

\begin{lem} 
Assume that {\tt (UCF)} holds for $A$. 
Then the number of $A$-infinite emitters is at most $M_A$.
\end{lem}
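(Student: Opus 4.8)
The plan is to argue by contradiction using a simple pigeonhole counting, exploiting the fact that {\tt (UCF)} caps the sizes of \emph{all} columns $C_j$ while the notion of $A$-infinite emitter is a statement about the \emph{rows} $R_j$, the two being linked by the obvious equivalence $j\in C_k \iff k\in R_j \iff A(j,k)=1$.

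Concretely, suppose for contradiction that there exist $M_A+1$ pairwise distinct $A$-infinite emitters $j_1,\dots ,j_{M_A+1}\in \N$. By Definition \ref{dfn:fs}, each set $F_\ell \coloneqq R_{j_\ell}^c$ is finite, and hence $F\coloneqq \bigcup_{\ell =1}^{M_A+1}F_\ell$ is a finite subset of $\N$; in particular $\N\setminus F$ is infinite, so it is nonempty. I would then fix any $k\in \N\setminus F$. For every $\ell\in\{1,\dots ,M_A+1\}$ we have $k\notin F_\ell = R_{j_\ell}^c$, so $k\in R_{j_\ell}$, that is, $A(j_\ell ,k)=1$, that is, $j_\ell\in C_k$. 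Since the $j_\ell$ are distinct, this forces $|C_k|\geq M_A+1$, contradicting $M_A=\max_{j\in\N}|C_j|$ from Definition \ref{dfn:ucf}. Therefore the number of $A$-infinite emitters is at most $M_A$.

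I do not anticipate any real obstacle here: the statement is essentially a reformulation of the definitions, and the only point requiring care is to keep straight that an $A$-infinite emitter is cofinite as a \emph{row} (so $R_j^c$ is finite) whereas the uniform bound $M_A$ controls the \emph{columns} $C_j$; once this is noted, a single choice of a common ``sink'' index $k$ lying outside all the finitely many exceptional sets does the job.
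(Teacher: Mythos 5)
Your proof is correct and follows essentially the same pigeonhole argument as the paper: the paper likewise assumes $M_A+1$ distinct $A$-infinite emitters and picks an index $N$ beyond all the finitely many exceptional sets so that the $N$-th column has more than $M_A$ entries, which is exactly your choice of $k\in\N\setminus F$. No issues.
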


\begin{proof}
Suppose that there exists a subset
\[
J \subset \{ j \in \N : j \ \text{is $A$-infinite emitter}\}
\]
with $|J|=M_A +1$.
Then one can find $N \in \N$ with $A(j, i)=1$ for any $i \geq N, j \in J$.
Hence, we have
\[
C_N = \{ k \in \N : A(k, N) = 1\} > M_A .
\] 
This is a contradiction.
\end{proof}

For convenience, we write 
\begin{equation}\label{eq:1117e}
\Omega_A \coloneqq \{ S_\alpha S^* _\beta : \alpha, \beta \in \Sigma _A^* \}\subset \mO_A, \quad
\omega_A \coloneqq \{ S_\alpha S^* _\beta \in \Omega_A : |\alpha| = |\beta| \}.
\end{equation}

\begin{lem}
Assume that {\tt (FS)} holds for $A$. 
Then we have
\[
\mathcal{A}_A = \text{\upshape{span}} \, \Omega_A.
\] 
\end{lem}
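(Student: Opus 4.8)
The plan is to prove the two inclusions $\operatorname{span}\Omega_A \subset \mathcal{A}_A$ and $\mathcal{A}_A \subset \operatorname{span}\Omega_A$ separately, the content being entirely in the second. The first is immediate: taking $F=\emptyset$, so that $\prod_{i\in F}S^*_iS_i=1$, every element $S_\alpha S^*_\beta$ with $\alpha,\beta\in\Sigma_A^*$ is one of the generators of $\mathcal{A}_A$, hence $\Omega_A\subset\mathcal{A}_A$ and $\operatorname{span}\Omega_A\subset\mathcal{A}_A$. For the reverse inclusion, since $\mathcal{A}_A$ is by definition the linear span of the elements $S_\alpha\bigl(\prod_{i\in F}S^*_iS_i\bigr)S^*_\beta$, it suffices to show that each such generator lies in $\operatorname{span}\Omega_A$.

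Two observations drive this. First, the projections $\{S_kS^*_k\}_{k\in\N}$ are pairwise orthogonal: by \eqref{EL2} we have $S^*_kS_l=\delta_{k,l}S^*_kS_k$, so $(S_kS^*_k)(S_lS^*_l)=S_k(S^*_kS_l)S^*_l=0$ for $k\neq l$, while $(S_kS^*_k)^2=S_kS^*_k$ because $S_k$ is a partial isometry. Second, under {\tt (FS)} every $j\in\N$ is an $A$-finite or an $A$-infinite emitter, so Lemma \ref{finsum} expresses $S^*_jS_j$ as a \emph{finite} linear combination of $1$ and of projections $S_kS^*_k$ (with $k$ running over $R_j$, respectively $R_j^c$). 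Since the factors $S^*_iS_i$ commute by \eqref{EL1}, I would expand $\prod_{i\in F}S^*_iS_i$ by substituting these finite combinations and multiplying out; each resulting monomial is a finite product of projections $S_kS^*_k$, hence by orthogonality is either $0$ or a single $S_kS^*_k$. A short induction on $|F|$ then yields
\[
\prod_{i\in F}S^*_iS_i \;=\; c_0\cdot 1 \;+\; \sum_{k\in G}c_k\,S_kS^*_k
\]
for some finite $G\subset\N$ and scalars $c_0,(c_k)_{k\in G}$.

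Multiplying this by $S_\alpha$ on the left and $S^*_\beta$ on the right gives
\[
S_\alpha\Bigl(\prod_{i\in F}S^*_iS_i\Bigr)S^*_\beta \;=\; c_0\,S_\alpha S^*_\beta \;+\; \sum_{k\in G}c_k\,(S_\alpha S_k)(S_\beta S_k)^*.
\]
By \eqref{EL3} one has $(S_iS_j)^*(S_iS_j)=S^*_j(S^*_iS_iS_j)=A(i,j)S^*_jS_j$, so $S_iS_j=0$ whenever $A(i,j)=0$; consequently, for $\alpha=\{\alpha_j\}_{j=1}^n\in\Sigma_A^*$ the element $S_\alpha S_k$ equals $S_{\alpha k}$ if $\alpha k\in\Sigma_A^*$ and equals $0$ otherwise, and likewise for $S_\beta S_k$. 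Hence each summand $(S_\alpha S_k)(S_\beta S_k)^*$ is either $0$ or $S_{\alpha k}S^*_{\beta k}\in\Omega_A$, so the whole right-hand side lies in $\operatorname{span}\Omega_A$. This proves $\mathcal{A}_A\subset\operatorname{span}\Omega_A$ and hence the lemma.

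The step I expect to be the main obstacle is the displayed reduction of $\prod_{i\in F}S^*_iS_i$ to a finite combination of $1$ and single projections: one must check carefully that every sum produced in the expansion stays finite — which is precisely what {\tt (FS)} buys, via finiteness of $R_j$ or of $R_j^c$ — and that the cross-terms collapse correctly using the orthogonality of the $S_kS^*_k$. Everything else is a routine bookkeeping application of the Exel–Laca relations \eqref{EL1}–\eqref{EL3}.
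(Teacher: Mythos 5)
Your proof is correct. Both your argument and the paper's rest on the same essential input --- Lemma \ref{finsum}, i.e.\ the hypothesis {\tt (FS)}, which writes each $S_j^*S_j$ as a \emph{finite} linear combination of $1$ and range projections $S_kS_k^*$ --- but the organization is genuinely different. The paper first shows that $\operatorname{span}\Omega_A$ is a $*$-subalgebra by a case analysis on products $S_\alpha S_\beta^* S_{\alpha'}S_{\beta'}^*$ of two generators (Case I: $|\beta|>|\alpha|$; Case II: $\beta=\alpha'$, where Lemma \ref{finsum} enters), and only then reduces the claim to the single elements $S_j^*S_j$: since $S_\alpha=S_\alpha S_e^*$, $S_\beta^*=S_eS_\beta^*$ and each $S_i^*S_i$ lie in $\operatorname{span}\Omega_A$, the algebra property forces every generator $S_\alpha\bigl(\prod_{i\in F}S_i^*S_i\bigr)S_\beta^*$ of $\mathcal{A}_A$ into $\operatorname{span}\Omega_A$. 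You bypass the subalgebra step entirely: you collapse $\prod_{i\in F}S_i^*S_i$ directly into $c_0\cdot 1+\sum_{k\in G}c_kS_kS_k^*$ with $G$ finite, using the mutual orthogonality of the projections $S_kS_k^*$ (a consequence of \eqref{EL2}), and then absorb each $S_k$ into the words $\alpha$ and $\beta$ via \eqref{EL3}, landing in $\Omega_A$ term by term. Your route is more computational and self-contained for this particular statement; the paper's route costs an extra case analysis but yields the stronger, reusable fact that $\operatorname{span}\Omega_A$ is a $*$-algebra, which is precisely what legitimizes its reduction. Both arguments are sound, and your identification of the finiteness of the expansion (guaranteed by {\tt (FS)} through the finiteness of $R_j$ or $R_j^c$) as the crux is accurate.
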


\begin{proof}
First, we check that $\text{span} \, \Omega_A$ is a $*$-algebra.
It is trivial that $\text{span} \, \Omega_A$ is a $*$-closed vector subspace of $\mO_A$. 
Take $S_{\alpha}S^ *_{\beta}, \, S_{\alpha'}S^ *_{\beta'} \in \Omega_A$ with $S_{\alpha}S^ *_{\beta}S_{\alpha'}S^ *_{\beta'} \neq 0$. 
We show $S_{\alpha}S^ *_{\beta}S_{\alpha'}S^ *_{\beta'} \in \text{span} \, \Omega_A$. 

\textbf{Case I : $|\beta| > |\alpha|$.} \quad There is an admissible word $\gamma$ with $S^ *_{\beta}S_{\alpha'} = S^* _\gamma$.
Hence, we have $S_{\alpha}S^ *_{\beta}S_{\alpha'}S^ *_{\beta'} \in \Omega_A \subset \text{span} \, \Omega_A$.

\textbf{Case II: $\beta = \alpha'$.} \quad Using Lemma \ref{finsum}, we have either 
\[
S^* _{\beta} S_{\alpha'} = S^* _{t(\beta)}S _{t(\beta)} = \sum_{k \in R_{t(\beta)}}S_kS^* _k 
\]
or 
\[
S^* _{\beta} S_{\alpha'} = S^* _{t(\beta)}S _{t(\beta)} = 1 - \sum_{k \in R^c _{t(\beta)}}S_kS^* _k .
\]
Both of two equalities imply $S_{\alpha}S^ *_{\beta}S_{\alpha'}S^ *_{\beta'} \in \text{span} \, \Omega_A$.

Thus, we have $S_{\alpha}S^ *_{\beta}S_{\alpha'}S^ *_{\beta'} \in \text{span} \, \Omega_A$ and therefore $\text{span}\, \Omega_A$ is a $*$-subalgebra of $\mA_A$. 
Hence, it suffices to show that $S^* _jS_j \in \text{span}\, \Omega_A$ for any $j \in \N$ since $\mathcal{A}_A$ is spanned by the elements of the form $S_\alpha \left( \prod_{i \in F}S^* _i S_i \right)S^* _\beta$. 
By Lemma \ref{finsum}, we obtain the conclusion. 
\end{proof}

\begin{dfn}
Suppose that {\tt (UCF)} and {\tt (FS)} hold for $A$.
We define a linear map $\ga_A \colon \mathcal{A}_A \to \mathcal{A}_A$ by
\[
\ga_A(S_\alpha S^* _\beta) \coloneqq  \sum_{i=1}^\infty S_iS_\alpha S^* _\beta S^* _i, \quad \ga_A(1) \coloneqq  1
\]
for each $S_\alpha S^* _\beta \in \Omega_A\setminus \{1\}$.
\end{dfn}

Notice that $\sum_{i=1}^\infty S_iS_\alpha S^* _\beta S^* _i$ equals to a finite sum by {\tt (UCF)}.

\begin{prop}\label{conti}
Suppose that {\tt (UCF)} and {\tt (FS)} hold for $A$.
Then $\ga_A$ is norm-continuous.
\end{prop}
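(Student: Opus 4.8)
The plan is to show that $\ga_A$ extends to a bounded operator on the norm closure $\mO_A$ by exhibiting it as conjugation by an isometry, up to a controlled correction. First I would observe that on $l^2(\Sigma_A)$ the family $\{S_i\}_{i\in\N}$ satisfies $\sum_{i=1}^\infty S_iS_i^*=1$ in the strong operator topology by \eqref{eq:1111d}, and each $S_i^*S_i$ is a projection with mutually orthogonal ranges in the sense of \eqref{EL2}. Consequently, for a \emph{finite} sum $T=\sum_{k=1}^m c_k S_{\alpha^{(k)}}S_\beta^{(k)*}\in\mathrm{span}\,\Omega_A$, the element $\ga_A(T)=\sum_{i=1}^\infty S_i T S_i^*$ is, on the Hilbert space level, the compression-free amplification $x\mapsto \sum_i S_i T S_i^* x$. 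The key point is that the partial isometries $V_i\coloneqq S_i$ have orthogonal final projections $S_iS_i^*$ and initial projections contained in the unit, so the map $\Phi(x)\coloneqq \sum_{i=1}^\infty S_i x S_i^*$ (strong-operator convergent on all of $B(l^2(\Sigma_A))$) is completely positive and \emph{contractive}: indeed $\Phi(1)=\sum_i S_iS_i^*=1$, and a completely positive unital map is automatically norm-contractive, $\|\Phi(x)\|\le\|x\|$.

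Next I would reconcile this with $\ga_A$. The only discrepancy between $\Phi$ and $\ga_A$ is on elements of the form $S_\alpha(\prod_{i\in F}S_i^*S_i)S_\beta^*$ where the inner projections $S_j^*S_j$ get pulled through: more precisely, $\ga_A$ was \emph{defined} on the distinguished spanning set $\Omega_A\cup\{1\}$ by the two prescriptions $\ga_A(S_\alpha S_\beta^*)=\sum_i S_iS_\alpha S_\beta^* S_i^*$ and $\ga_A(1)=1$, and the earlier lemma identifying $\mathcal A_A=\mathrm{span}\,\Omega_A$ (using \texttt{(FS)} and Lemma \ref{finsum}) shows every element of $\mathcal A_A$ is a finite linear combination of elements of $\Omega_A$. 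So it suffices to check that $\ga_A$ agrees with $\Phi|_{\mathcal A_A}$, i.e. that $\Phi(S_\alpha S_\beta^*)=\sum_{i=1}^\infty S_iS_\alpha S_\beta^* S_i^*$ and $\Phi(1)=1$; the second is \eqref{eq:1111d}, and the first is immediate since $\Phi$ is by definition $x\mapsto\sum_i S_i x S_i^*$. Hence $\ga_A=\Phi|_{\mathcal A_A}$, and since $\Phi$ is contractive on all of $B(l^2(\Sigma_A))\supset\mathcal O_A$, the map $\ga_A$ satisfies $\|\ga_A(T)\|\le\|T\|$ for all $T\in\mathcal A_A$. A bounded linear map on the dense subalgebra $\mathcal A_A$ extends uniquely to a bounded (in fact contractive) linear map $\mO_A\to B(l^2(\Sigma_A))$; one then checks the range lands in $\mO_A$ by density and continuity, since $\ga_A(\mathcal A_A)\subset\mathcal A_A\subset\mO_A$.

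I expect the main subtlety to be the well-definedness underlying the very first step: one must be sure that $\ga_A$, as prescribed on the spanning set $\Omega_A$, is consistent with the algebraic relations among the $S_\alpha S_\beta^*$ — i.e. that it really is the restriction of the single globally-defined, relation-respecting map $\Phi$, rather than something only formally defined on generators. This is exactly why \texttt{(UCF)} is invoked: it guarantees $\sum_{i=1}^\infty S_iS_\alpha S_\beta^* S_i^*$ is a genuine finite sum in $\mO_A$ (not merely a strong-operator limit), so that $\ga_A(\mathcal A_A)\subset\mathcal A_A$ makes sense as stated, while \texttt{(FS)} ensures $\mathcal A_A=\mathrm{span}\,\Omega_A$ so that linearity pins the map down. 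Once one knows $\ga_A=\Phi|_{\mathcal A_A}$ with $\Phi$ globally contractive, norm-continuity is automatic. An alternative, more hands-on route avoiding the identification with $\Phi$ would be to estimate $\|\ga_A(T)\|$ directly for $T\in\mathrm{span}\,\Omega_A$ using the orthogonality relations \eqref{EL1}–\eqref{EL2} to see that the "blocks" $S_iTS_i^*$ act on mutually orthogonal subspaces $S_i(l^2(\Sigma_A))$, whence $\|\sum_i S_iTS_i^*\|=\sup_i\|S_iTS_i^*\|\le\|T\|$; but packaging this as "$\ga_A$ is the restriction of the unital completely positive map $\Phi$" is cleaner and also immediately yields that $\ga_A$ is itself unital and completely positive, which is needed later anyway.
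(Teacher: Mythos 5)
Your argument is correct, and it packages the estimate differently from the paper. The paper proves the bound $\lV \ga_A(f)\rV\le\lV f\rV$ by a direct vector computation: it writes $f=f_0+c1$ with $f_0\in\mathrm{span}(\Omega_A\setminus\{1\})$, picks a unit vector $\xi$ nearly attaining $\lV\ga_A(f)\rV$, replaces $\xi$ by $\sum_{i\le n}S_iS_i^*\xi$ up to $\vep$, and then uses exactly the orthogonality of the blocks $S_ifS_i^*\xi\in S_i\,l^2(\Sigma_A)$ that you isolate in your ``hands-on'' aside. Your primary route instead realizes $\ga_A$ as the restriction to $\mathcal A_A$ of the globally defined map $\Phi(x)=\sum_i S_ixS_i^*$ on $B(l^2(\Sigma_A))$ (point-strong limit of the ccp maps $x\mapsto\sum_{i\le n}S_ixS_i^*$, unital by \eqref{eq:1111d}), and then quotes the contractivity of ucp maps; the identification is legitimate because {\tt (UCF)} makes $\Phi(S_\alpha S_\beta^*)$ a finite sum agreeing with the prescribed value, and $\Phi(1)=1$ matches the convention $\ga_A(1)=1$. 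This buys two things the paper only addresses implicitly or afterwards: it makes the well-definedness of $\ga_A$ on the (linearly dependent) spanning set $\Omega_A$ transparent, and it yields at once that $\ga_A$ is ucp --- a fact the paper derives separately in the paragraph following the proposition, via the same point-strong limit. The trade-off is that the paper's proof is self-contained, needing only \eqref{eq:1111d} and orthogonality of the ranges $S_i\,l^2(\Sigma_A)$, whereas yours leans on the standard facts that point-strong limits of ccp maps are ccp and that unital (completely) positive maps are norm-contractive. Both are sound; no gap.
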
 

\begin{proof}
We show $\| \ga_A(f)\| \leq \| f \|$ for any $f \in \mathcal{A}_A$.
We disassemble $f$ into the form 
\[
f =f_0 + c 1
\]
where $f_0 \in \text{span}\, \Omega_A$ and $c \in \C$. 
Consider the canonical representation $\pi \colon \mO_A \to B(l^2(\Sigma_A))$.
Fix an arbitrary $\vep > 0$.
Choose $\xi \in l^2(\Sigma_A)$ with $\| \xi \| =1$ and $\| \ga_A(f) \xi \| \geq \| \ga_A(f) \| - \vep$. 
Since $\bigoplus_{i \in \N} \text{Im} S_iS^* _i = l^2(\Sigma_A)$, there is a natural number $n_0$ with 
\[
\left\| \xi - \sum_{i=1}^n S_iS^* _i \xi \right\| < \vep
\]
for any $n \geq n_0$.
Choose $n_1 \in \N$ large enough that we have 
\[
\ga_A(f_0) = \sum_{i=1}^{n_1}S_i f_0S^* _i.
\] 
Set $n_2 \coloneqq  \max\{n_0, n_1\}$.
Then we get 
\begin{equation*}
\begin{split}
\left\| \left(\ga_A(f) - \sum_{i=1}^{n_2}S_i fS^* _i \right) \xi \right\| 
&= \left\| \left( \sum_{i=1}^{n_1}S_i f_0 S^* _i + c 1 - \sum_{i=1}^{n_1}S_i f_0S^* _i - c\sum_{i=1}^{n_2}S_i S^* _i \right) \xi\right\| \\
&= |c| \left\| \left( 1 - \sum_{i=1}^{n_2}S_i S^* _i \right) \xi \right\| \leq |c| \vep.  
\end{split}
\end{equation*}
Thus, we have
\[
\| \ga_A(f)\xi \| \leq \left\| \left(\sum_{i=1}^{n_2}S_i fS^* _i  \right)\xi \right\| + |c|\vep.
\]
Note that $\{ S_i fS^* _i \xi \}_{i \in \N}$ and $\{ S_i S^* _i \xi \}_{i \in \N}$ are sequences of mutually orthogonal vectors in $l^2(\Sigma_A)$.
Thus,
\begin{equation*}
\left\| \left(\sum_{i=1}^{n_2}S_i fS^* _i\right) \xi \right\|^2 = \sum_{i=1}^{n_2} \| S_i fS^* _i \xi\|^2 = \sum_{i=1}^{n_2} \|S_i fS^* _i S_i S^* _i \xi \|^2 \leq \|f \|^2 \sum_{i=1}^{n_2} \| S_i S^* _i \xi \|^2 \leq \| f\|^2.
\end{equation*}
Hence, we obtain
\[
\| \ga_A(f) \| \leq \| \ga_A(f) \xi \| + \vep \leq \|f\| + (1+|c|)\vep.
\]
Since $\vep >0$ is arbitrary, we get the conclusion.
\end{proof}

From now on we always assume that  {\tt (SH)}, {\tt (UCF)} and {\tt (FS)} hold for $A$. 
Proposition \ref{conti} allows us to extend $\ga_A$ to a bounded linear map on $\mO_A$.
We use the same symbol $\ga_A$ for the extended map.
Note that one has 
\[
\ga_A (f)= \lim_{n \to \infty}\sum_{i=1}^n S_i f S^* _i
\]
in the point-strong operator topology for any $f\in \mathcal O_A$  (under the canonical representation $\pi : \mO_A\to B(l^2(\Sigma_A))$ in the footnote \ref{footnote_3}).
Thus, $\ga_A \colon \mO_A \to \mO_A$ is a ucp map since each $f\mapsto \sum_{i=1}^nS_i f S^* _i$ is a ccp (i.e.~contractive cp) map.

\subsection{Relation between $\sigma _A$ and $\gamma _A$}\label{subsec31}

In this subsection, we relate the countable Markov shift $\sigma_A$ acting on the shift space $\Sigma_A$ and the ucp map $\gamma _A$ over $\mathcal{D}_A$, the commutative part of $\mathcal{O}_A$, defined above, under the assumptions  {\tt (SH)}, {\tt (UCF)} and {\tt (FS)} for $A$.
This may allow us to call $\gamma _A$ the `canonical' ucp map (compare it with the canonical cp map  in \eqref{eq:canonicalcpsubshift} for subshifts with finite symbols).

Let $X_A$ be the character space of $\mathcal{D}_A$ and $\Phi \colon \mathcal{D}_A \to C(X_A)$ be the Gelfand--Naimark $*$-isomorphism.
That is, 
\[
X_A\coloneqq \{ \varphi : \mathcal D_A\to \mathbb C:\ \text{unital homomorphim}\},
\]
and for each $f\in\mathcal D_A$, $\Phi(f)$ is a $\mathbb{C}$-valued continuous function on $X_A$, given by
\[
\Phi(f)(\varphi)\coloneqq \varphi(f) \quad \text{for $\varphi\in X_A$.}
\]
We will extend $\sigma_A$ to $X_A$ and check $(\sigma_A)_* \circ \Phi = \Phi \circ \gamma_A$. 
First we aim to write elements in $X_A$ in a more concrete term.
Consider the set
\[
Y' _A \coloneqq  \left\{ \alpha =\{\alpha _j\}_{j=1}^m \in \Sigma^* _A \backslash \{ e \} : \alpha_{m} \ \text{is an $A$-infinite emitter}\right\}
\]
(recall that $e$ is the empty word).
We denote by $\{ \delta_\alpha \}_{\alpha \in Y' _A \cup \{e\}}$ the canonical orthonormal basis for $l^2(Y' _A \cup \{e\})$.
We define the sequence of partial isometries $\{ T_j \}_{j \in \N}$ on $l^2(Y' _A \cup \{e\})$ by
\[
T_j\delta_{\alpha}=
\begin{cases}
\delta_{j\alpha} &\text{if }A(j,\alpha_1)=1,\\
0 & \text{otherwise};
\end{cases}
\]
for $\alpha =\{\alpha _k\}_{k=1}^m \in Y' _A$ and
\[
T_j\delta_{e}=
\begin{cases}
\delta_{j} &\text{if $j$ is an $A$-infinite emitter},\\
0 & \text{otherwise}.
\end{cases}
\]
A straightforward calculation shows that
\[
T_j^*\delta_{\alpha}=
\begin{cases}
\delta_{\sigma_A\alpha} &\text{if } \alpha_1=j,\\
0 & \text{otherwise};
\end{cases}
\]
for any $\alpha =\{\alpha _k\}_{k=1}^m \in Y' _A$ and
\[
T_j^*\delta_{e}=0
\]
for any $j\in\mathbb{N}$.
One has 
\[
T^* _jT_j T^* _i T_i \delta_\alpha = T^* _iT_i T^* _j T_j \delta_\alpha, \quad
T^* _iT_i T_j \delta_\alpha = A(i, j) T_j \delta_\alpha, \quad
T^* _iT_j \delta_\alpha = \delta_{i, j} A(i, \alpha_1)\delta_\alpha
\]
and
\[
\left(\prod_{i \in X}T^* _iT_i\right)\left(\prod_{j \in Y} \left(1 - T^* _j T_j\right)\right)\delta_\alpha = \sum_{k=1}^\infty A(X, Y, k)T_kT_k^* \delta_\alpha
\]
for any $i, j \in \N$, $\alpha \in Y' _A$ and any finite subsets $X, Y \subset \N$ with $\lvert\{ k \in \N : A(X, Y, k) \neq 0\}\rvert < \infty$.
Moreover, one can show
\[
T^* _jT_j T^* _i T_i \delta_e = T^* _iT_i T^* _j T_j \delta_e, \quad
T^* _iT_i T_j \delta_e = A(i, j) T_j \delta_e
\]
and
\[
T^* _iT_j \delta_e = \delta_{i, j} \delta_e
\]
if $i$ and $j$ are $A$-infinite emitters.
Hence, to obtain a representation of $\mathcal{O}_A$ on $l^2(Y' _A \cup \{e\})$, it is sufficient to show that
\begin{equation}\label{EL4forT}
\left(\prod_{i \in X}T^* _iT_i\right)\left(\prod_{j \in Y} \left(1 - T^* _j T_j\right)\right)\delta_e = \sum_{k=1}^\infty A(X, Y, k)T_kT^*_k \delta_e
\end{equation}
for any finite subsets $X, Y \subset \N$ with $\lvert\{ k \in \N : A(X, Y, k) \neq 0\}\rvert < \infty$.

\begin{lem}\label{lem:rep}
Assume that  {\tt (FS)} holds for $A$.
Then one has a representation $\pi'$ of $\mathcal{O}_A$ on $l^2(Y' _A \cup \{e\})$ with 
\[
\pi'(S_j) = T_j
\]
for any $j \in \N$.
\end{lem}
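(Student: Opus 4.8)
The plan is to verify the relation \eqref{EL4forT}, since all the other Exel--Laca relations \eqref{EL1}--\eqref{EL4} at every basis vector (including $\delta_e$, under the extra hypothesis on infinite emitters) have already been checked by the direct computations displayed above, and relations among operators in $B(l^2(Y'_A\cup\{e\}))$ can be checked vectorwise on the orthonormal basis $\{\delta_\alpha\}$. Once \eqref{EL4forT} is established, the universal property of $\mathcal O_A$ furnishes a $*$-homomorphism $\pi'\colon\mathcal O_A\to B(l^2(Y'_A\cup\{e\}))$ with $\pi'(S_j)=T_j$, which is exactly what is asserted.

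So fix finite subsets $X,Y\subset\N$ with $|\{k:A(X,Y,k)\neq 0\}|<\infty$ and evaluate both sides of \eqref{EL4forT} on $\delta_e$. First I would compute the left-hand side. Since $T_j^*T_j\delta_e=\delta_e$ exactly when $j$ is an $A$-infinite emitter and $T_j^*T_j\delta_e=0$ otherwise (this follows from $T_j^*\delta_e=0$ together with $T_j^*T_j\delta_\alpha$ being a $0$-$1$ multiple of $\delta_\alpha$; more precisely $T_j\delta_e=\delta_j$ or $0$ and then $T_j^*\delta_j=\delta_e$), the factor $\prod_{i\in X}T_i^*T_i$ annihilates $\delta_e$ unless every $i\in X$ is an $A$-infinite emitter, and the factor $\prod_{j\in Y}(1-T_j^*T_j)$ annihilates $\delta_e$ unless no $j\in Y$ is an $A$-infinite emitter. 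Hence the left-hand side equals $\delta_e$ if every $i\in X$ is an $A$-infinite emitter and no $j\in Y$ is, and equals $0$ otherwise.

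Next I would compute the right-hand side. We have $T_kT_k^*\delta_e = 0$ for all $k$, because $T_k^*\delta_e=0$; therefore $\sum_{k}A(X,Y,k)T_kT_k^*\delta_e=0$ regardless of $X,Y$. So the content of \eqref{EL4forT} reduces to showing that whenever $|\{k:A(X,Y,k)\neq 0\}|<\infty$, the left-hand side is also $0$ at $\delta_e$; equivalently, it cannot happen simultaneously that every $i\in X$ is an $A$-infinite emitter, that no $j\in Y$ is an $A$-infinite emitter, and that $\{k:A(X,Y,k)=1\}$ is finite. \textbf{This is the key step and the main obstacle.} I would argue as follows: suppose every $i\in X$ is an $A$-infinite emitter. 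If $X=\emptyset$ one must separately handle $Y$, so I would treat the case $X\neq\emptyset$ first. Then $R_i^c$ is finite for each $i\in X$, so $\bigcap_{i\in X}R_i$ is cofinite in $\N$, i.e.\ there is $N_0$ with $A(i,k)=1$ for all $i\in X$ and all $k\ge N_0$. On the other hand, for $j\in Y$ the condition ``$j$ is not an $A$-infinite emitter'' means, under {\tt (FS)}, that $j$ is an $A$-finite emitter, so $R_j$ is finite; hence $\bigcup_{j\in Y}R_j$ is finite, and there is $N_1$ with $A(j,k)=0$ for all $j\in Y$ and all $k\ge N_1$. Taking $k\ge\max(N_0,N_1)$ we get $A(i,k)=1$ for all $i\in X$ and $A(j,k)=0$ for all $j\in Y$, i.e.\ $A(X,Y,k)=1$ for all such $k$, contradicting finiteness of $\{k:A(X,Y,k)=1\}$. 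Thus when $X\neq\emptyset$ the left-hand side at $\delta_e$ vanishes, matching the right-hand side. In the remaining case $X=\emptyset$, the left-hand side at $\delta_e$ is $\prod_{j\in Y}(1-T_j^*T_j)\delta_e$, which is nonzero (equal to $\delta_e$) only if no $j\in Y$ is an $A$-infinite emitter, in which case again $\bigcup_{j\in Y}R_j$ is finite by {\tt (FS)}, so $A(\emptyset,Y,k)=1$ for all sufficiently large $k$, contradicting the hypothesis; hence the left-hand side is $0$ here too. This exhausts all cases and proves \eqref{EL4forT}, completing the verification that $\pi'$ is a well-defined representation.

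One bookkeeping remark worth including: to invoke the universal property one also needs each $T_j$ to be a partial isometry, which is immediate since $T_j$ maps an orthonormal subset into an orthonormal set and kills the orthogonal complement; and one needs relations \eqref{EL1}--\eqref{EL3} as operator identities, which follow from the displayed vectorwise identities on all $\delta_\alpha$ with $\alpha\in Y'_A$ together with the $\delta_e$ computations above (noting that \eqref{EL2} at $\delta_e$, namely $T_i^*T_j\delta_e=\delta_{i,j}T_i^*T_i\delta_e$, holds trivially since both sides vanish unless $i=j$ is an $A$-infinite emitter). I expect no difficulty there; the only genuinely substantive point is the combinatorial contradiction argument for \eqref{EL4forT} outlined above, which crucially uses {\tt (FS)} to force every element of $Y$ to be an $A$-finite emitter.
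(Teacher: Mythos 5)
Your proposal is correct and follows essentially the same route as the paper: both reduce the verification to \eqref{EL4forT} at $\delta_e$, observe that the right-hand side always vanishes there, and rule out the one configuration where the left-hand side would be $\delta_e$ (all of $X$ consisting of $A$-infinite emitters and, via {\tt (FS)}, all of $Y$ consisting of $A$-finite emitters) by the same cofiniteness argument showing $A(X,Y,k)=1$ for all large $k$, contradicting the finiteness hypothesis. The only difference is presentational — you phrase it as a contradiction from the nonvanishing case while the paper argues the contrapositive directly — and your extra bookkeeping on relations \eqref{EL1}--\eqref{EL3} matches the computations the paper displays just before the lemma.
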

\begin{proof}
Assume that {\tt (FS)} holds for $A$.
We check the equation (\ref{EL4forT}) for any finite subsets $X, Y \subset \N$ with $\lvert\{ k \in \N : A(X, Y, k) \neq 0\}\rvert < \infty$.
Suppose that any $i \in X$ is an $A$-infinite emitter and any $j \in Y$ is an $A$-finite emitter.
Then 
\[
A(X, Y, k) = \prod_{i \in X} A(i, k) \prod_{j \in Y} \left(1 -A(j, k)\right) = 1
\] 
holds for infinitely many $k \in \N$ since  any $A$-finite emitter cannot be an $A$-infinite emitter.
Thus, the condition {\tt (FS)} allows us to assume that $X$ contains an $A$-finite emitter or  $Y$ contains an $A$-infinite emitter.
If $X$ contains an $A$-finite emitter, then one has
\[
\left(\prod_{i \in X}T^* _iT_i\right)\delta_e = 0.
\]   
In addition, if $Y$ contains an $A$-infinite emitter, then one has
\[
\left(\prod_{i \in Y}(1 - T^* _iT_i)\right)\delta_e = 0.
\]
Consequently, we proved that the left-hand side of the equation (\ref{EL4forT}) equals to $0$.
Moreover, since $T_kT^* _k\delta_e =0$ for any $k \in \N$, the right hand side of the equation (\ref{EL4forT}) equals to $0$.
\end{proof}

For each $\alpha \in Y' _A \cup \{ e \}$, we define a state $\tau_\alpha$ on $\mathcal{O}_A$ by
\[
\tau_\alpha(f) \coloneqq  \langle \pi'(f) \delta_\alpha, \delta_\alpha \rangle \quad \text{for } f \in \mathcal{O}_A.
\]
Note that $\tau_\alpha(S_u S^* _u)$ equals to either $1$ or $0$ for any finite admissible word $u$. 
Thus, the multiplicative domain of $\tau_\alpha$ includes $\mathcal{D}_A$ and therefore $\tau_\alpha \vert_{\mathcal{D}_A}$ is a (nonzero) character on $\mathcal{D}_A$ (see proposition 1.5.7 of \cite{brown2008textrm} for the multiplicative domain).
In summary, we have
\[
Y_A \coloneqq  \{ \tau_\alpha \vert_{\mathcal{D}_A} \}_{\alpha \in Y' _A \cup \{e\}} \subset X_A.
\]  
Recall that $\pi$ is the canonical representation of $\mathcal{O}_A$ on $l^2(\Sigma_A)$ (recall the footnote \ref{footnote_3}) defined similarly to $\pi'$.
For each $x \in \Sigma_A$, we define
\[
\tau_x(f) \coloneqq  \langle \pi(f) \delta_x, \delta_x \rangle \quad \text{for }f \in \mathcal{O}_A.
\]
A continuous embedding of $\Sigma_A$ into $X_A$ is given by the restrictions on $\mathcal{D}_A$ of $\tau_x$'s (see Definition 4.40 in \cite{Raszeja}).
We write this embedding
\begin{equation}\label{eq:1228a}
\iota \colon \Sigma_A \to X_A.
\end{equation}

\begin{prop}
Assume that {\tt (FS)} holds for $A$.
Then one has $Y_A = X_A \setminus \iota(\Sigma_A)$.
\end{prop}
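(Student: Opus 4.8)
The plan is to classify every character of $\mathcal D_A$ by the combinatorial datum of which cylinder projections it sends to $1$, and then to read off from that whether the character lies in $\iota(\Sigma_A)$ or in $Y_A$. For $\varphi\in X_A$ set $W_\varphi\coloneqq\{\alpha\in\Sigma_A^*:\varphi(S_\alpha S^*_\alpha)=1\}$; since $\varphi$ is multiplicative we have $\varphi(S_\alpha S^*_\alpha)\in\{0,1\}$, so this is well defined, and $e\in W_\varphi$ as $S_eS^*_e=1$. The first point I would establish is that $\varphi$ is completely determined by $W_\varphi$. Indeed, under {\tt (FS)}, Lemma \ref{finsum} expresses every $S^*_iS_i$ as a finite $\C$-linear combination of $1=S_eS^*_e$ and finitely many $S_kS^*_k$; substituting this into a spanning element $S_\alpha\bigl(\prod_{i\in F}S^*_iS_i\bigr)S^*_\alpha$ of $\mathcal D_A$, pulling $S^*_\alpha S_\alpha=S^*_{t(\alpha)}S_{t(\alpha)}$ into the middle, and using the mutual orthogonality of the $S_kS^*_k$ together with $S_\alpha S_kS^*_kS^*_\alpha=S_{\alpha k}S^*_{\alpha k}$ (which vanishes unless $k\in R_{t(\alpha)}$), one sees that each such element is a finite $\C$-linear combination of projections $S_\beta S^*_\beta$ with $\beta\in\Sigma_A^*$. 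Hence $\mathcal D_A=\overline{\text{span}}\{S_\beta S^*_\beta:\beta\in\Sigma_A^*\}$, and two characters agreeing on all $S_\beta S^*_\beta$ coincide. I expect this reduction — in particular the bookkeeping of vanishing terms and the observation that {\tt (FS)} is exactly what makes all the $S^*_iS_i$ expressible in this finite way — to be the main obstacle; the rest is combinatorial.

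Next I would record two facts about $W_\varphi$. (i) It is a chain for the prefix order: if $\alpha,\beta\in W_\varphi$ were incompatible, then $S^*_\alpha S_\beta=0$ by \eqref{EL2} and \eqref{EL3} applied at the first coordinate where $\alpha$ and $\beta$ differ, hence $S_\alpha S^*_\alpha\cdot S_\beta S^*_\beta=0$ and $1=\varphi(S_\alpha S^*_\alpha)\varphi(S_\beta S^*_\beta)=\varphi(0)=0$, absurd; moreover $S_{\alpha\delta}S^*_{\alpha\delta}=S_\alpha(S_\delta S^*_\delta)S^*_\alpha\le S_\alpha S^*_\alpha$, so $\alpha\in W_\varphi$ forces every prefix of $\alpha$ into $W_\varphi$. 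Thus $W_\varphi$ is a nonempty prefix-closed chain, so either it is infinite, in which case its lengths are unbounded and it has a unique limit $x\in\Sigma_A$ (all prefixes being admissible), or it is finite, in which case it has a longest element. (ii) A direct computation in the canonical representation gives $W_{\tau_x}=\{\,\text{prefixes of }x\,\}$ for $x\in\Sigma_A$, while for $\alpha\in Y'_A\cup\{e\}$ one checks from $\pi'$ that $\pi'(S_\beta)^*\delta_\alpha\neq 0$ iff $\beta$ is a prefix of $\alpha$, so $W_{\tau_\alpha}=\{\,\text{prefixes of }\alpha\,\}$ is a finite set. Combining (ii) with the first paragraph, a finite set of words cannot equal an infinite one, so $Y_A\cap\iota(\Sigma_A)=\emptyset$, i.e.\ $Y_A\subseteq X_A\setminus\iota(\Sigma_A)$.

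For the reverse inclusion I would take $\varphi\in X_A\setminus\iota(\Sigma_A)$ and analyse $W_\varphi$ via (i). If $W_\varphi$ were infinite, its limit $x\in\Sigma_A$ would satisfy $W_{\tau_x}=\{\,\text{prefixes of }x\,\}=W_\varphi$ by prefix-closedness, so $\varphi=\tau_x|_{\mathcal D_A}\in\iota(\Sigma_A)$, a contradiction. Hence $W_\varphi$ is finite with a longest element $\alpha$ (possibly $\alpha=e$). If $\alpha\neq e$ and $t(\alpha)$ were an $A$-finite emitter, then Lemma \ref{finsum}(1) and $S_\alpha S^*_\alpha=S_\alpha(S^*_{t(\alpha)}S_{t(\alpha)})S^*_\alpha$ would give the finite orthogonal decomposition $S_\alpha S^*_\alpha=\sum_{k\in R_{t(\alpha)}}S_{\alpha k}S^*_{\alpha k}$; applying $\varphi$ yields $1=\sum_{k\in R_{t(\alpha)}}\varphi(S_{\alpha k}S^*_{\alpha k})$, so $\varphi(S_{\alpha k}S^*_{\alpha k})=1$ for some $k\in R_{t(\alpha)}$, i.e.\ $\alpha k\in W_\varphi$ with $|\alpha k|>|\alpha|$, contradicting maximality of $\alpha$. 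Therefore $t(\alpha)$ is an $A$-infinite emitter whenever $\alpha\neq e$, so $\alpha\in Y'_A\cup\{e\}$; by prefix-closedness $W_\varphi=\{\,\text{prefixes of }\alpha\,\}=W_{\tau_\alpha}$, and since a character is determined by its $W$ we conclude $\varphi=\tau_\alpha|_{\mathcal D_A}\in Y_A$. This gives $X_A\setminus\iota(\Sigma_A)\subseteq Y_A$ and completes the proof.
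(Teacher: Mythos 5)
Your proof is correct, and for the substantive inclusion $X_A\setminus\iota(\Sigma_A)\subseteq Y_A$ it takes a genuinely different route from the paper. The paper's argument for that inclusion is not self-contained: it invokes Propositions 4.63 and 4.64 of \cite{Raszeja}, which classify characters of $\mathcal D_A$ by their ``stems'' and require checking that the columns of $A$ converge (under {\tt (FS)}) to the indicator of the set of $A$-infinite emitters; the conclusion is then read off from the uniqueness of the character with a given finite stem. You instead classify characters directly by the prefix-closed chain $W_\varphi=\{\alpha:\varphi(S_\alpha S^*_\alpha)=1\}$, after first observing that {\tt (FS)} and Lemma \ref{finsum} collapse every generator $S_\alpha\bigl(\prod_{i\in F}S^*_iS_i\bigr)S^*_\alpha$ into a finite linear combination of cylinder projections $S_\beta S^*_\beta$, so that a character of $\mathcal D_A$ is determined by $W_\varphi$; the dichotomy ``$W_\varphi$ infinite $\Rightarrow\varphi\in\iota(\Sigma_A)$'' versus ``$W_\varphi$ finite with top element $\alpha$, whose last letter must be an $A$-infinite emitter by Lemma \ref{finsum}(1), $\Rightarrow\varphi=\tau_\alpha$'' then does all the work. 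What your approach buys is a self-contained, purely combinatorial proof that moreover exhibits exactly where {\tt (FS)} enters (twice: once to reduce $\mathcal D_A$ to the span of the $S_\beta S^*_\beta$, once to rule out a finite-emitter endpoint of a maximal word); what the paper's approach buys is brevity and consistency with the character-space machinery of \cite{Raszeja} that it uses elsewhere. The easy half ($Y_A\cap\iota(\Sigma_A)=\emptyset$) is essentially identical in both: compare values on cylinder projections of length exceeding $\lvert\alpha\rvert$. One cosmetic remark: in your first paragraph the phrase about ``pulling $S^*_\alpha S_\alpha$ into the middle'' is unnecessary --- one simply expands $\prod_{i\in F}S^*_iS_i$ via Lemma \ref{finsum}, collapses products of the mutually orthogonal $S_kS^*_k$ using \eqref{EL2}, and conjugates by $S_\alpha$ --- but the computation you describe is the right one.
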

\begin{proof}
First, we show $\iota(\Sigma_A) \cap Y_A = \emptyset$.
Take arbitrary $x \in \Sigma_A$ and $\alpha \in Y' _A \cup \{ e \}$.
Then for any $m \geq \lvert\alpha\rvert+1$, there exists an admissible word $\mu$ with $\lvert\mu \rvert =m$ and $\tau_x(S_\mu S^* _\mu) =1$.
On the other hand, one has $\tau_\alpha(S_\mu S^* _\mu)=0$ since $\lvert\mu\rvert \geq \lvert\alpha\rvert +1$.
Hence, $\tau_x \neq \tau_\alpha$.
Since $x$ and $\alpha$ are arbitrary, we have $\iota(\Sigma_A) \cap Y_A = \emptyset$ and hence $Y_A \subset X_A \setminus \iota(\Sigma_A)$.

Next, we prove $X_A \setminus \iota(\Sigma_A) \subset Y_A$.
We aim to apply Proposition 4.63 and Proposition 4.64 in \cite{Raszeja}.
We observe
\[
\left\{ i \in \N : \tau_e(S^* _iS_i)=1\right \} = \left\{ i \in \N : i \text{ is an $A$-infinite emitter} \right\}. 
\]
Then we conclude that $\tau_e$ is the unique character corresponding to the empty word and the set of all $A$-infinite emitters (see Proposition 4.63 in \cite{Raszeja}).
Moreover, since we assume {\tt (FS)}, the sequence of columns of $A$ converges to $a = \{a_i\}_{i \in \N} \in \{0, 1 \}^\N$ defined to be
\[
a_i = \begin{cases}
1 & \text{if $i$ is an $A$-infinite emitter,} \\
0 & \text{otherwise}.
\end{cases}
\]  
Here we consider the product topology of discrete spaces on $\{0, 1\}^\N$.
Hence Proposition 4.64 \cite{Raszeja} shows that $\tau_e$ is the unique character whose stem is the empty word (see \cite{Raszeja} or the original paper \cite{EL1999} for the definition of stem).

Similarly, one can check that
\[
\left\{ i \in \N : \tau_\alpha(S_\alpha S^* _iS_i S^* _\alpha)=1 \right\} = \left\{ i \in \N : i \text{ is an $A$-infinite emitter} \right\}
\] 
for any $\alpha \in Y' _A$.
This shows that $\tau_\alpha$ is the unique character corresponding to $\alpha$ and the set of all $A$-infinite emitters in the sense of Proposition 4.63 in \cite{Raszeja}.
In addition, for any $\alpha \in Y' _A$,
Proposition 4.64 in \cite{Raszeja} and the uniqueness of the accumulation point, which we observed above, implies that each $\tau_\alpha$ is the unique character whose stem is $\alpha$.
This together with the above observation for $e$ shows $X_A \setminus \iota(\Sigma_A) \subset Y_A$ since any element in $\Sigma_A$ has a stem whose length is not finite.
\end{proof}

Now, we extend the shift map $\sigma_A$ on $\Sigma_A$ to $\hat{\sigma}_A$ on $X_A$ by defining
\[
\hat{\sigma}_A(\tau_e) \coloneqq  \tau_e
\quad\text{and}\quad
\hat{\sigma}_A(\tau_\alpha) \coloneqq  \tau_{\sigma_A(\alpha)}
\quad\text{for $\alpha \in Y' _A$},
\]
where $\sigma_A(\alpha)$ is given by the usual left shift operation on the set of nonempty finite admissible words.
We also define
\[
\hat{\sigma}_A(\tau_x) \coloneqq  \tau_{\sigma_A(x)} \quad \text{for }x \in \Sigma _A.
\] 
By definition, we have
\[
\hat{\sigma}_A \circ \iota = \iota \circ \sigma_A.
\]

For the following proposition, we recall that $\Phi(f)$ is a (weak*) continuous function on $X_A$ for any $f \in \mathcal{D}_A$. 
\begin{prop}\label{commprop}
Under the assumptions {\tt (UCF)} and {\tt (FS)},  one has 
\[
\left(\Phi(f)\right)\left(\hat{\sigma}_A(\tau)\right) = \left(\hat{\sigma}_A(\tau)\right)(f) = \tau\left(\gamma_A(f)\right) = \left((\Phi \circ \gamma_A)(f)\right)(\tau)
\]
for any $f \in \mathcal{D}_A$ and $\tau \in X_A$. 
\end{prop}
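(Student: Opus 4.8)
The plan is to notice that the first and the last equalities are purely formal: they are just the defining property $\Phi(g)(\varphi)=\varphi(g)$ of the Gelfand--Naimark isomorphism, applied at $\varphi=\hat\sigma_A(\tau)$ and to $g=\gamma_A(f)$ respectively, so the only thing with content is the middle identity
\[
\hat\sigma_A(\tau)(f)=\tau(\gamma_A(f))\qquad(f\in\mathcal D_A,\ \tau\in X_A).
\]
Since $\hat\sigma_A(\tau)\in X_A$ is a character and $\gamma_A$ is norm-continuous (Proposition~\ref{conti}) and, as I will check, maps $\mathcal D_A$ into itself, both sides are bounded linear functionals of $f$; hence it suffices to verify the identity on a set whose linear span is dense in $\mathcal D_A$. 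I would use
\[
\mathcal D_A=\overline{\text{span}}\bigl(\{1\}\cup\{S_\mu S_\mu^*:\mu\in\Sigma_A^*\setminus\{e\}\}\bigr),
\]
which follows from {\tt (FS)} and Lemma~\ref{finsum}: each generator $S_\alpha(\prod_{i\in F}S_i^*S_i)S_\alpha^*$ of $\mathcal D_A$ expands, upon replacing each $S_i^*S_i$ by $\sum_{k\in R_i}S_kS_k^*$ or by $1-\sum_{k\in R_i^c}S_kS_k^*$ according to whether $i$ is an $A$-finite or an $A$-infinite emitter, and multiplying out (using $S_kS_k^*S_\ell S_\ell^*=\delta_{k\ell}S_kS_k^*$ and $S_\alpha S_kS_k^*S_\alpha^*=S_{\alpha k}S_{\alpha k}^*$), into a finite linear combination of $1$ and elements $S_\mu S_\mu^*$ with $\mu\ne e$; the unit actually shows up only when $\alpha=e$ and every $i\in F$ is an $A$-infinite emitter.

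Next I would compute $\gamma_A$ on this spanning set: $\gamma_A(1)=1$ by definition, while for $\mu\ne e$ one gets $\gamma_A(S_\mu S_\mu^*)=\sum_{i\in C_{\mu_1}}S_{i\mu}S_{i\mu}^*$, a genuinely finite sum (by {\tt (UCF)}, $|C_{\mu_1}|\le M_A$) of elements of $\mathcal D_A$. This gives $\gamma_A(\mathcal D_A)\subseteq\mathcal D_A$ by continuity and closedness, so that the outer equalities make sense, and it reduces the proposition to checking $\hat\sigma_A(\tau)(f)=\tau(\gamma_A(f))$ for $f=1$ (trivial: both sides equal $1$, as $\gamma_A$ and characters are unital) and for $f=S_\mu S_\mu^*$, $\mu\in\Sigma_A^*\setminus\{e\}$. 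For the latter I would use the previous proposition to write $\tau=\tau_\zeta\vert_{\mathcal D_A}$ with $\zeta\in\Sigma_A\cup Y_A'\cup\{e\}$, and realize $\tau_\zeta(\cdot)=\langle\pi_\zeta(\cdot)\delta_\zeta,\delta_\zeta\rangle$ in the canonical representation $\pi$ on $l^2(\Sigma_A)$ when $\zeta\in\Sigma_A$, and in the representation $\pi'$ on $l^2(Y_A'\cup\{e\})$ of Lemma~\ref{lem:rep} when $\zeta\in Y_A'\cup\{e\}$; in both cases $\pi_\zeta(S_j)=T_j$ for the respective partial isometries. Because $\gamma_A(S_\mu S_\mu^*)$ is a finite operator sum, $\pi_\zeta(\gamma_A(S_\mu S_\mu^*))=\sum_{i\in C_{\mu_1}}T_{i\mu}T_{i\mu}^*$ in either picture; evaluating at $\delta_\zeta$ and using the explicit formulas for $T_j^*$, only the index $i=\zeta_1$ can contribute (with the convention $\sigma_A e=e$; when $\zeta=e$ no term contributes at all, since $T_i^*\delta_e=0$ for every $i$), and the sum collapses to $\lVert T_\mu^*\delta_{\sigma_A\zeta}\rVert^2$, which is $1$ if $\sigma_A\zeta$ begins with $\mu$ and $0$ otherwise. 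That value is exactly $\langle T_\mu T_\mu^*\delta_{\sigma_A\zeta},\delta_{\sigma_A\zeta}\rangle=\tau_{\sigma_A\zeta}(S_\mu S_\mu^*)=\hat\sigma_A(\tau)(S_\mu S_\mu^*)$, by the definition of $\hat\sigma_A$ on all three pieces of $X_A$.

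The hard part --- really the only subtle point --- will be the bookkeeping around the unit. It is tempting to transport the identity $\gamma_A(f)=\lim_n\sum_{i=1}^nS_ifS_i^*$, valid in the canonical representation $\pi$, to the representation $\pi'$; but this \emph{fails}, because $\sum_iT_iT_i^*$ is not the identity on $l^2(Y_A'\cup\{e\})$ --- it annihilates $\delta_e$, which reflects that $\gamma_A(1)=1\ne\sum_iS_iS_i^*$ in that picture. This is precisely why the argument has to be organized around a spanning set of $\mathcal D_A$ on which $\gamma_A$ is a genuinely finite operator sum (hence preserved by every $*$-representation), with the unit peeled off and handled separately. A secondary point that needs a little care is the behaviour of $\hat\sigma_A$ at the boundary characters $\tau_\alpha$, $\alpha\in Y_A'$, especially when $|\alpha|=1$ so that $\hat\sigma_A(\tau_\alpha)=\tau_e$; but the word-level formulas for $T_j$ and $T_j^*$ on $l^2(Y_A'\cup\{e\})$ make that case completely parallel to the generic one.
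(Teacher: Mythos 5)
Your proposal is correct and follows essentially the same route as the paper's proof: peel off the two formal Gelfand--Naimark equalities, reduce the middle identity to the spanning set $\{1\}\cup\{S_\mu S_\mu^*\}$ of $\mathcal D_A$, and verify it case by case on the three types of characters $\tau_e$, $\tau_x$ ($x\in\Sigma_A$) and $\tau_\alpha$ ($\alpha\in Y_A'$) via the representations $\pi$ and $\pi'$. Your version is, if anything, slightly more explicit than the paper's about why agreement on that spanning set suffices (both functionals are bounded and linear) and about the pitfall that $\sum_i T_iT_i^*$ is not the identity on $l^2(Y_A'\cup\{e\})$, which is exactly why the unit must be handled by the convention $\gamma_A(1)=1$.
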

\begin{proof}
Since $\Phi$ gives the Gelfand--Naimark representation of $\mathcal{D}_A$, one has 
\[
\left(\Phi(f)\right)(\hat{\sigma}_A(\tau)) = \left(\hat{\sigma}_A(\tau)\right)(f) \quad \text{and} \quad \tau\left(\gamma_A(f)\right) = \left((\Phi \circ \gamma_A)(f)\right)(\tau)
\]
for any $f \in \mathcal{D}_A$ and $\tau \in X_A$. 
Moreover, the commutative C$^*$-algebra $\mathcal{D}_A$ is generated by $\{ S_\mu S^* _\mu \}_{\mu \in \Sigma^* _A}$. 
Thus, it suffices to show
\begin{equation}\label{commeq}
\left(\hat{\sigma}_A(\tau)\right)\left(S_\mu S^* _\mu\right) = \tau\left(\gamma_A\left(S_\mu S^* _\mu\right)\right)
\end{equation}
for any $\mu \in \Sigma_A ^*$ and $\tau \in X_A$.
It is clear that the equation (\ref{commeq}) holds for $\mu =e$ since $S_e S^* _e =1$.
We check the equation (\ref{commeq}) for $\mu \in \Sigma_A ^* \backslash \{e \}$. 

For $\tau = \tau_e$, one has
\[
\left(\hat{\sigma}_A(\tau_e)\right)\left(S_\mu S^* _\mu\right) = \tau_e\left(S_\mu S^* _\mu\right) =0
\]
and
\[
\tau_e\left(\gamma_A\left(S_\mu S^* _\mu\right)\right) = \sum_{i =1}^{\infty}\tau_e\left(S_i S_\mu S^* _\mu S^* _i\right) = 0. 
\]
Thus, the equation (\ref{commeq}) holds for $\tau = \tau_e$.

Take $x \in \Sigma_A$.  
For $\tau = \tau_x$, one computes
\begin{equation*}
\left(\hat{\sigma}_A(\tau_x)\right)\left(S_\mu S^* _\mu\right)
= \tau_{\sigma_A(x)}\left(S_\mu S^* _\mu\right) 
=\begin{cases}
1 & \text{if } A(x_1, \mu_1) = 1 \text{ and } \sigma_A(x) \in [\mu], \\
0 & \text{otherwise};
\end{cases}
\end{equation*}
and
\begin{equation*}
\begin{split}
\tau_x\left(\gamma_A\left(S_\mu S^* _\mu \right)\right)
&= \sum_{i =1}^{\infty}\left\langle \pi\left(S_i S_\mu S^* _\mu S^* _i\right) \delta_x, \delta_x \right\rangle \\  
&=\begin{cases}
1 & \text{if } A(x_1, \mu_1) = 1 \text{ and } \sigma_A(x) \in [\mu], \\
0 & \text{otherwise}.
\end{cases}
\end{split}
\end{equation*}
Thus, the equation (\ref{commeq}) holds for $\tau = \tau_x$.

Take $\alpha \in \Sigma^* _A$.
For $\tau = \tau_\alpha$, one computes
\begin{equation*}
\begin{split}
\left(\hat{\sigma}_A(\tau_\alpha)\right)\left(S_\mu S^* _\mu\right)
&= \tau_{\sigma_A(\alpha)}\left(S_\mu S^* _\mu\right) \\
&=\begin{cases}
1 & \text{if } \lvert\alpha\rvert \leq \lvert\mu \rvert + 1, A(\alpha_1, \mu_1) = 1 \text{ and } \sigma_A(\alpha) = \mu \nu \text{ for some } \nu \in \Sigma^* _A , \\
0 & \text{otherwise};
\end{cases}
\end{split}
\end{equation*}
and
\begin{equation*}
\begin{split}
\tau_{\alpha}\left(\gamma_A(S_\mu S^* _\mu )\right)
&= \sum_{i = 1}^{\infty}\left<\pi'(S_i S_\mu S^* _\mu S^* _i)\delta_{\alpha}, \delta_{\alpha}\right>\\  
&=\begin{cases}
1 & \text{if } |\alpha| \leq |\mu | + 1, A(\alpha_1, \mu_1) = 1 \text{ and } \sigma_A(\alpha) = \mu \nu \text{ for some } \nu \in \Sigma^* _A, \\
0 & \text{otherwise}.
\end{cases}
\end{split}
\end{equation*}
Thus, the equation (\ref{commeq}) holds for $\tau = \tau_{\alpha}$.
Hence, we finished the proof.
\end{proof}

\begin{prop}\label{prop:wconti}
Under the assumptions {\tt (UCF)} and {\tt (FS)}, the extended $\hat{\sigma}_A$ on $X_A$ is weak* continuous.
\end{prop}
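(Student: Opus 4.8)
The plan is to obtain this as a short formal consequence of Proposition~\ref{commprop}, with no new analytic input. Recall that $\mathcal D_A$ is a unital commutative C$^*$-algebra, so $X_A$ is compact Hausdorff in the weak* topology and, by the Gelfand--Naimark theorem, the weak* topology on $X_A$ is precisely the initial topology induced by the family of Gelfand transforms $\{\Phi(g):g\in\mathcal D_A\}$; equivalently $C(X_A)=\{\Phi(g):g\in\mathcal D_A\}$. Hence a self-map $F$ of $X_A$ is weak* continuous if and only if $\Phi(g)\circ F\in C(X_A)$ for every $g\in\mathcal D_A$, and I would verify exactly this for $F=\hat\sigma_A$.

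First I would record the invariance $\gamma_A(\mathcal D_A)\subseteq\mathcal D_A$: for a spanning monomial $S_\alpha\big(\prod_{i\in F}S_i^*S_i\big)S_\alpha^*$ of $\mathcal D_A$, the defining formula for $\gamma_A$ together with {\tt (UCF)} gives
\[
\gamma_A\Big(S_\alpha\big(\textstyle\prod_{i\in F}S_i^*S_i\big)S_\alpha^*\Big)=\sum_{j\in C_{\alpha_1}} S_{j\alpha}\big(\textstyle\prod_{i\in F}S_i^*S_i\big)S_{j\alpha}^*,
\]
a finite sum (by {\tt (UCF)}) of elements of $\mathcal D_A$, where the summands with $j\alpha$ inadmissible already vanish because $S_jS_{\alpha_1}=0$ whenever $A(j,\alpha_1)=0$; taking norm closures gives $\gamma_A(\mathcal D_A)\subseteq\mathcal D_A$, so $\Phi(\gamma_A(g))\in C(X_A)$ for every $g\in\mathcal D_A$. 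Then Proposition~\ref{commprop} tells us that, for every $g\in\mathcal D_A$ and every $\tau\in X_A$,
\[
\big(\Phi(g)\big)\big(\hat\sigma_A(\tau)\big)=\tau\big(\gamma_A(g)\big)=\big(\Phi(\gamma_A(g))\big)(\tau),
\]
that is, $\Phi(g)\circ\hat\sigma_A=\Phi(\gamma_A(g))$ as functions on $X_A$. Since the right-hand side lies in $C(X_A)$, this identity shows $\Phi(g)\circ\hat\sigma_A\in C(X_A)$ for all $g\in\mathcal D_A$, and the universal property of the initial topology yields weak* continuity of $\hat\sigma_A$. Concretely, if $\tau_\lambda\to\tau$ weak* in $X_A$, then $\big(\hat\sigma_A(\tau_\lambda)\big)(g)=\Phi(\gamma_A(g))(\tau_\lambda)\to\Phi(\gamma_A(g))(\tau)=\big(\hat\sigma_A(\tau)\big)(g)$ for every $g\in\mathcal D_A$, i.e.\ $\hat\sigma_A(\tau_\lambda)\to\hat\sigma_A(\tau)$ weak*.

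I do not expect a genuine obstacle here: all the substantive work --- the description $X_A=\iota(\Sigma_A)\cup Y_A$, the identification of $\hat\sigma_A$ with the left shift on stems, and the intertwining relation of Proposition~\ref{commprop} on $\mathcal D_A$ --- has already been carried out, and what remains is the formal ``initial topology'' argument above. The one point that merits a line of explicit verification is the invariance $\gamma_A(\mathcal D_A)\subseteq\mathcal D_A$, needed to make sense of $\Phi\circ\gamma_A$ on $\mathcal D_A$; as indicated, it is immediate from the explicit action of $\gamma_A$ on the spanning monomials of $\mathcal D_A$ and {\tt (UCF)}.
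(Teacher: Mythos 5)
Your proposal is correct and follows essentially the same route as the paper: both reduce weak* continuity of $\hat\sigma_A$ to the identity $(\hat\sigma_A(\tau))(f)=\tau(\gamma_A(f))$ from Proposition~\ref{commprop} together with the invariance $\gamma_A(\mathcal D_A)\subset\mathcal D_A$, your concluding net computation being literally the paper's argument. The only difference is cosmetic — you phrase it via the initial-topology characterization and spell out the invariance $\gamma_A(\mathcal D_A)\subseteq\mathcal D_A$ on spanning monomials, which the paper asserts without proof.
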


\begin{proof}
Take any $\tau \in X_A$ and any net $\tau^{(\lambda)} \in X_A$ converging to $\tau$ with respect to the weak* topology.
Apply Proposition \ref{commprop} to any $f \in \mathcal{D}_A$, then we have
\[
\left\lvert \left(\hat{\sigma}_A \left(\tau^{(\lambda)}\right)\right) (f) - \left(\hat{\sigma}_A (\tau)\right) (f) \right\rvert
= \left\lvert \tau^{(\lambda)}\left(\gamma_A(f)\right) - \tau\left(\gamma_A(f)\right) \right\rvert.
\]
Since $\gamma_A(\mathcal{D}_A) \subset \mathcal{D}_A$ and $\tau^{(\lambda)} \in X_A$ converges to $\tau$ with respect to the weak* topology, we have 
\[
\lim_{\lambda} \left\lvert \tau^{(\lambda)}\left(\gamma_A(f)\right) - \tau\left(\gamma_A(f)\right) \right\rvert =0
\]
for any $f \in \mathcal{D}_A$.
Combining these,
\[
\lim_{\lambda}\left\lvert \left(\hat{\sigma}_A \left(\tau^{(\lambda)}\right)\right) (f) - \left(\hat{\sigma}_A (\tau)\right) (f) \right\rvert = 0.
\]
This implies that $\hat{\sigma}_A(\tau^{(\lambda)})$ converges to $\hat{\sigma}_A (\tau)$.  
\end{proof}

By virtue of Proposition \ref{prop:wconti}, one can define the pullback operation $(\hat{\sigma}_A)_* : f\mapsto f\circ \hat{\sigma} _A$ by $\hat{\sigma} _A$ on $C(X_A)$.
Using $\iota_*$ defined by $f\mapsto f\circ \iota$ for $f\in C(X_A)$ as well, we have the following commutative diagram.

\[
  \begin{CD}
     C(\Sigma _A) @>{(\sigma_A)_*}>> C(\Sigma _A)\\
  @A{\iota_*}AA    @A{\iota_*}AA \\
     C(X_A)   @>{(\hat \sigma_A)_*}>>  C(X_A) \\
  @A{\Phi}AA	@A{\Phi}AA\\
     \mathcal{D}_A @>{\gamma_A}>> \mathcal{D}_A\\
     @A{id\vert_{\mathcal{D}_A}}AA	@A{id\vert_{\mathcal{D}_A}}AA\\
     \mathcal{O}_A @>{\gamma_A}>> \mathcal{O}_A\\
  \end{CD}
\]

\section{Main result}\label{s:main}

As the final preparation before stating our main results, we precisely define the rest of the assumptions in Theorem \ref{thm:main}.
Let $\rho _A^{-1} (B)$ be the inverse map of the multi-valued map on $\N$ induced by the arrows in $A$, that is, $\rho _A^{-1}: \N \to 2^{\N}$ such that $\rho _A^{-1}(i) \ni j$ if and only if $A(j,i)=1$ for each $i\in \N$. 
Write $\rho _A ^{-1}(B)\coloneqq \bigcup _{i\in B} \rho _A^{-1}(i)$  for $B\subset \N$.
For $m, n\in \N$, we inductively define $N(m,n)$ by
\[
N(1,n)\coloneqq \{1,2,\ldots ,n\}, \quad N(m+1,n)\coloneqq \rho _A^{-1}(N(m,n)).
\]
\begin{dfn}\label{asympfin}
We say that $A$ is \textit{asymptotically finite} {\tt (AF)}  if there exists a natural number $n_0$ such that $N(1,n) \subset N(2,n)$ for any $n \geq n_0$.
\end{dfn}
\begin{lem}\label{seqofsets}
Assume that   {\tt (AF)} holds for $A$ with  $n_0 \in \N$.
Then, for any $m\in \N$ and $n \geq n_0$, 
\[
N(m, n) \subset N(m+1, n).
\]
\end{lem}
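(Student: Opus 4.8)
The plan is to prove this by induction on $m$, using the monotonicity of the map $B \mapsto \rho_A^{-1}(B)$ to propagate the base-case inclusion $N(1,n) \subset N(2,n)$ upward through the tower of sets.

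First I would record the elementary observation that $\rho_A^{-1}$ is monotone with respect to inclusion: if $B \subset C \subset \N$, then $\rho_A^{-1}(B) = \bigcup_{i \in B} \rho_A^{-1}(i) \subset \bigcup_{i \in C} \rho_A^{-1}(i) = \rho_A^{-1}(C)$, directly from the definition $\rho_A^{-1}(B) = \bigcup_{i \in B}\rho_A^{-1}(i)$. This is the only structural fact about $\rho_A^{-1}$ that is needed.

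Next, fix $n \geq n_0$ and induct on $m$. The base case $m = 1$ is exactly the defining property of {\tt (AF)} in Definition \ref{asympfin}: $N(1,n) \subset N(2,n)$ for $n \geq n_0$. For the inductive step, assume $N(m,n) \subset N(m+1,n)$. Applying $\rho_A^{-1}$ to both sides and using the monotonicity observation yields $\rho_A^{-1}(N(m,n)) \subset \rho_A^{-1}(N(m+1,n))$, which by the recursive definition $N(k+1,n) = \rho_A^{-1}(N(k,n))$ is precisely $N(m+1,n) \subset N(m+2,n)$. This closes the induction, and since $n \geq n_0$ was arbitrary the statement follows for all $m \in \N$ and all $n \geq n_0$.

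I do not expect any genuine obstacle here; the lemma is a routine consequence of the monotonicity of the preimage operation together with the base case built into the definition of {\tt (AF)}. The only point requiring a modicum of care is making sure the monotonicity of $\rho_A^{-1}$ is stated and invoked cleanly, since it is what allows the base-case inclusion to be pushed through each stage of the recursion.
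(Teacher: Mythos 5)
Your proposal is correct and follows essentially the same route as the paper: induction on $m$ with the base case given by {\tt (AF)}, and the inductive step carried out by the monotonicity of $\rho_A^{-1}$ (which the paper verifies element-wise rather than stating as a separate observation). There is no substantive difference.
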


\begin{proof}
Fix $n\ge n_0$.
We show the lemma by induction for $m \in \N$.
The case $m=1$ is exactly {\tt (AF)}.
Assume that $N(m, n) \subset N(m+1, n)$.
Take $i \in N(m+1, n)=\rho _A^{-1}(N(m,n))$.
Then, $A(i, j)=1$ for some $j \in N(m, n) \subset N(m+1, n)$.
This implies $i \in N(m+2, n)$. 
\end{proof}
Assume that {\tt (UCF)} holds.
Then, for any $m, n \in \N$, $N(m,n)$ is a finite set, and
\begin{align*}
I(m,n)\coloneqq  \max N(m,n)
\end{align*}
is well-defined.
Assume also that {\tt (AF)} holds. 
Then, $n\le I(m,n)\le I(m+1,n)$ for $m\in\N$, $n\ge n_0$.

\begin{dfn}\label{dfn:SI}
We say that the \emph{subexponential inverse image property} {\tt (SI)} holds for $A$ if
 \[
\mathcal I \coloneqq \limsup _{n\to\infty} \limsup _{m\to \infty} \frac{\log I(m,n)}{m} =0.
\]
\end{dfn}

For each $m,n,p\in \N$ with $p\le m-1$, we define
\[
L(m, n) \coloneqq  \left\{  \{\alpha_i\}_{i=1}^m \in \Sigma _A^*:  \alpha _m \leq n \right\}, \quad 
L_p(m,n)\coloneqq \left\{ \{ \alpha_i\}_{i=1}^p :  \{ \alpha_i\}_{i=1}^m\in L(m,n)\right\}.
\]
Then, it is immediate to see that $L_1(m,n) = N(m,n)$.
Note that 
\[
L(m,n) \subset L_p(m,n)L(m-p,n) \coloneqq  \left\{  \{ \alpha_i\}_{i=1}^m : \{ \alpha_i\}_{i=1}^p\in L_p(m,n), \;  \{ \alpha_i\}_{i=p+1}^m\in L(m-p,n)\right\}
\]
while the inverse inclusion is not true in general.
However, it will be important to estimate $\vert L_p(m,n)\vert $ from above by using $\frac{\vert L(m,n)\vert }{\vert L(m-p,n)\vert }$.
Taking it into account, we introduce the following condition.
\begin{dfn}
We say that the \emph{subexponential distortion property} {\tt (SD)} holds for $A$
 \[
\mathcal D\coloneqq \limsup _{n\to\infty} \limsup _{m\to \infty} \frac{\log D(m,n)}{m} =0,
\]
where
\[
D(m,n)\coloneqq \max _{1\le p\le m-1} \frac{\vert  L_p(m,n)\vert \cdot \vert L(m-p,n)\vert }{\vert L(m,n)\vert }.
\]
\end{dfn}
\begin{remark}\label{rem:1117}
Assume that $A$ has the following self-similar inverse image structure: for $m,n,p\in \N$ with $p\le m-1$ and $i, j \in N(m-p+1,n)$, 
\begin{equation}\label{eq:1211a}
\vert L^i_p(m,n) \vert = \vert L^j_p(m,n)\vert ,
\end{equation}
where $L^j_p(m,n)\coloneqq \{ \{\alpha _i\}_{i=1}^p \in L_p(m,n) : \alpha _p = j\}$.
(Note that the renewal shift satisfies the condition with $\vert L^i_p(m,n)\vert = 2^{p-1}$.)
Then, it is straightforward to see that\footnote{For each $\{\alpha _i\}_{i=1}^m\in L(m,n)$ and $j \in N(m-p+1,n)$, denote by $\Gamma (\{\alpha _i\}_{i=1}^m, j ) $
the set of sequences $\{ \gamma _i\}_{i=1}^m$ such that $\gamma _i= \alpha _i$ for $i=p+1,\ldots ,m$, $\gamma _{p} = j$ and $\{ \gamma _i\} _{i=1}^p\in L_p(m,n)$.
Fix $j \in N(m-p+1,n)$.
Then, it follows from \eqref{eq:1211a} that $\vert \Gamma (\{\alpha _i\}_{i=1}^m,j ) \vert = \vert L_p^j(m,n)\vert $, and that for each $\{\beta_i\} _{i=p+1}^m \in L(m-p,n) $, there are exactly $\vert L^{j}(m,n)\vert $-many $\{\alpha _i\} _{i=1}^m$'s in $L(m,n)$ such that 
\[
L_p^j(m,n) \times \{ \{\beta_i\} _{i=p+1}^m\} = \Gamma (\{\alpha _i\} _{i=1}^m, j).
\]
Hence, 
\[
\vert L_p^j(m,n) L(m-p,n) \vert \le \frac{1 }{\vert L^{j}(m,n)\vert }\sum _{\{\alpha _i\}_{i=1}^m\in L(m,n)}  \vert \Gamma (\{\alpha _i\}_{i=1}^m, j )\vert  = \vert L(m,n)\vert .
\]
Thus, we immediately get the claim by \eqref{eq:1211a}.}
\[
\frac{\vert  L_p(m,n)\vert \cdot \vert L(m-p,n)\vert }{\vert L(m,n)\vert } \le I(m-p+1,n).
\]
Hence, $\mathcal D\le \mathcal I$ when {\tt (AF)} holds.
That is, {\tt (AF)} and {\tt (SI)} imply {\tt (SD)} under \eqref{eq:1211a}.
\end{remark}

\begin{dfn}\label{procolfin}
A finite division $H^{(1)}, \cdots, H^{(m)}$ of $\N$ is said to be an \textit{$A$-orthogonal division} if $\vert \rho _A^{-1}(k) \cap H^{(l)}\vert \le 1$ for any $k \in \N$ and $l \in \{1, \cdots, m\}$.
We say that $A$ is \textit{properly column finite} {\tt (PCF)} if there exists an $A$-orthogonal division. 
\end{dfn}
\begin{remark}\label{rem:1117}
Let $H^{(1)}, \cdots, H^{(m)}$ be an $A$-orthogonal division.
Then for each $ i, j \in H^{(l)}$ with $l \in \{1, \cdots, m \}$,  one has 
\[
S_iS^* _j =0 \quad \text{if and only if $i\neq j$}.
\]
Thus,
\[
S_{(l)} \coloneqq \sum_{i \in H^{(l)}} S_i \quad (l=1, \cdots, m)
\]
are well-defined partial isometries (the sum is given by strong operator topology on some appropriate $*$-representations of $\mathcal O_A$),
and $S_{(l)}S_{(l)}^* = \sum _{i, j \in H^{(l)}} S_i S_j^* = \sum _{i \in H^{(l)}} S_i S_i^* $.
Hence, it follows from \eqref{eq:1111d} that 
\[
\sum_{l=1}^mS_{(l)}S_{(l)}^* =1.
\]
Although we will not use the equality in this form, this well explains why {\tt (PCF)} is useful in our proof, where the argument of \cite{BG2000} in the finite matrix case is heavily used.
\end{remark}

\begin{lem}
{\tt (PCF)} implies {\tt (UCF)}.
\end{lem}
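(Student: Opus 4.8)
The plan is to unwind both definitions and observe that an $A$-orthogonal division directly furnishes a uniform bound on the column sizes $|C_k|=|\rho_A^{-1}(k)|$. Suppose $H^{(1)},\ldots,H^{(m)}$ is an $A$-orthogonal division of $\N$ witnessing {\tt (PCF)}. The key point is that $\{H^{(l)}\}_{l=1}^{m}$ is a partition of $\N$ into pairwise disjoint sets, so for any fixed $k\in\N$ the column-support $C_k=\rho_A^{-1}(k)$ decomposes as the disjoint union $C_k=\bigsqcup_{l=1}^{m}\bigl(\rho_A^{-1}(k)\cap H^{(l)}\bigr)$.

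First I would record the cardinality identity $|C_k|=\sum_{l=1}^{m}\bigl|\rho_A^{-1}(k)\cap H^{(l)}\bigr|$, valid precisely because the division is into pairwise disjoint pieces covering $\N$. Next I would invoke the defining inequality of an $A$-orthogonal division, namely $\bigl|\rho_A^{-1}(k)\cap H^{(l)}\bigr|\le 1$ for every $k\in\N$ and every $l\in\{1,\ldots,m\}$, to bound each summand by $1$. Combining, $|C_k|\le m$ for every $k\in\N$, hence $M_A=\max_{k\in\N}|C_k|\le m<\infty$, which is exactly the statement of {\tt (UCF)}.

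There is essentially no obstacle here: the implication is immediate from the definitions once one notices that a finite division of $\N$ is in particular a partition. The only point requiring a word of care is the interpretation of ``division'' as a decomposition into pairwise disjoint subsets whose union is $\N$, so that the counting step is a genuine equality; in fact even the weaker bound $|C_k|\le\sum_{l=1}^{m}\bigl|\rho_A^{-1}(k)\cap H^{(l)}\bigr|$, coming just from $C_k\subset\bigcup_{l=1}^{m}H^{(l)}=\N$, already suffices for the conclusion $M_A\le m$.
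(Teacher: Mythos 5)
Your proof is correct and follows essentially the same route as the paper's: both arguments use the defining bound $\lvert\rho_A^{-1}(k)\cap H^{(l)}\rvert\le 1$ over the $m$ pieces of the division to conclude $\lvert C_k\rvert\le m$ for every $k$, hence $M_A\le m<\infty$. Your write-up is just a slightly more explicit version (making the identification $C_k=\rho_A^{-1}(k)$ and the disjoint-union counting step precise), so nothing further is needed.
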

 
\begin{proof}
By assumption, we have an $A$-orthogonal division $H^{(1)}, \cdots, H^{(m)}$.
For each $i \in \N, l \in \{1, \cdots, m\}$, there exists at most one $j \in H^{(l)}$ with $A(i, j) =1$.
Thus, we have $|C_i| \leq m$ for any $i \in \N$.
\end{proof}

When $A$ satisfies {\tt (PCF)}, one can find the smallest natural number $m_A$ such that there exists an $A$-orthogonal division consisting of $m_A$ divisions. 
An $A$-orthogonal division is said to be \textit{minimal} if it consists of $m_A$ divisions. 
For each minimal $A$-orthogonal division $H^{(1)}, \cdots, H^{(m_A)}$,
we define 
\[
P_A(H^{(1)}, \cdots, H^{(m_A)}) \coloneqq   |\{l \in \{1, \cdots, m_A\} : H^{(l)} \ \text{is an infinite set} \}|.
\] 
We write
\[
P_A \coloneqq  \min \{ P_A(H^{(1)}, \cdots, H^{(m_A)}) : H^{(1)}, \cdots, H^{(m_A)} \ \text{is a minimal $A$ -othogonal division} \}. 
\]
\begin{dfn}\label{procolfin}
We say that $A$ is \textit{orthogonal} {\tt (O)} if {\tt (PCF)} holds and $P_A=1$.
\end{dfn}

Now we are ready to state our main results precisely.
\begin{thm} \label{thm:1}
Assume that {\tt (SH)}, {\tt (FS)}, {\tt (AF)} and {\tt (PCF)} hold for $A$ and there exists a KMS state for the canonical gauge action $\Gamma$.
Then, we have
\[
ht (\gamma _A) \leq  h_{G}(\sigma _A) + \log P_A +\mathcal I 
\]
and
\begin{equation*}
ht (\gamma _A) \leq \max \left\{h_{G}(\sigma _A) , \log P_A\right\} +\mathcal I +\mathcal D.
\end{equation*}
In particular, if in addition  $\log P_A \le h_{G}(\sigma _A)$ (such as  {\tt (O)} holds) and  {\tt (SI)} and {\tt (SD)}  hold, then
\begin{equation*}
ht (\gamma _A) \leq h_{G}(\sigma _A).
\end{equation*}
\end{thm}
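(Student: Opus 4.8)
The plan is to bound $ht(\gamma_A)$ from above by producing, for each finite $\omega\subset\mathcal O_A$ and each $\delta>0$, explicit completely positive approximations $(\phi_n,\psi_n,\mathcal A_n)$ of $\bigcup_{j=0}^{n-1}\gamma_A^j(\omega)$ whose ranks grow at most like $e^{n(h_G(\sigma_A)+\log P_A+\mathcal I)}$ (and, in the second estimate, like $e^{n(\max\{h_G,\log P_A\}+\mathcal I+\mathcal D)}$). By the basic properties of $ht$ recalled in Section \ref{subsec23} — in particular that $ht(\gamma_A)=\sup_j ht(\gamma_A,\omega_j)$ along an increasing sequence whose iterated images span a dense subalgebra — it suffices to treat $\omega$ of the form $\{S_\alpha S^*_\beta : \alpha,\beta\in\Sigma_A^*,\ |\alpha|,|\beta|\le r\}$ for fixed $r$; note $\gamma_A^j(S_\alpha S^*_\beta)=\sum_{|\mu|=j}S_\mu S_\alpha S^*_\beta S^*_\mu$ is supported on words whose terminal $r$ coordinates are the ``old'' $\alpha,\beta$ data, so $\bigcup_{j=0}^{n-1}\gamma_A^j(\omega)$ lives in the finite-dimensional-modulo-tails algebra spanned by $S_\gamma(\prod_{i\in F}S_i^*S_i)S^*_\delta$ with $|\gamma|,|\delta|\le n+r$ and suitably controlled cylinders.

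\textbf{Step 1 (from paths to a finite-dimensional model).} First I would exploit {\tt (PCF)}: fixing a minimal $A$-orthogonal division $H^{(1)},\dots,H^{(m_A)}$ realizing $P_A$, the partial isometries $S_{(l)}=\sum_{i\in H^{(l)}}S_i$ of Remark \ref{rem:1117} let one re-encode a length-$n$ path as a word of length $n$ in the $m_A$-letter alphabet $\{1,\dots,m_A\}$ \emph{together with} the terminal vertex; orthogonality guarantees the $S_{(l)}$ behave like Cuntz--Krieger generators, so the argument of Boca--Goldstein \cite{BG2000} for finite matrices applies to the ``coarse'' data. The subtlety absent in the finite case is the vertex bookkeeping: after $n$ steps a path of interest ends in $N(n+1,n')$ for the relevant $n'$, a set of size $I(n+1,n')$, and it is precisely here that $\mathcal I$ enters — one loses a factor $I(n+1,n')$, contributing $\mathcal I$ to the exponential rate via Definition \ref{dfn:SI}.

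\textbf{Step 2 (the approximation itself).} For the block $\mathcal A_n$ I would take a matrix algebra indexed by pairs of admissible words of length $\approx n$ ending in the allowed vertex set, grouped by their $H^{(l)}$-itinerary. The map $\phi_n:\mathcal O_A\to\mathcal A_n$ is built from the state furnished by the hypothesis: the existence of a KMS state for the gauge action $\Gamma$, passed through the GNS construction of Section 2.4, supplies a conditional-expectation-type projection onto $(\mathcal O_A)^\Gamma$ and a family of vector states that makes the truncation $f\mapsto \sum_{|\mu|\le n}(\cdots)$ completely positive with controlled error on $\omega$ — this is the standard mechanism (cf.\ Choda \cite{C1996}) by which a KMS/gauge-invariant state converts the formal finite-dimensionality into genuine cp maps of the required rank. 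The first bound $ht\le h_G+\log P_A+\mathcal I$ then follows by counting: $\mathrm{rank}\,\mathcal A_n\le (\text{number of length-}n\ \text{loops})\cdot P_A^{n}\cdot I(n+1,n')$, and $\frac1n\log(\#\text{loops})\to h_G(\sigma_A)$ by Section \ref{ss:Ge}. The second bound replaces the crude $P_A^n$ factor by a more careful split of each path at a cut point $p$, estimating $|L_p(m,n)|$ via $|L(m,n)|/|L(m-p,n)|$ up to the distortion $D(m,n)$ of {\tt (SD)}; here instead of multiplying the loop count by $P_A^n$ one interleaves ``free'' blocks and ``loop'' blocks and optimizes, which yields $\max\{h_G,\log P_A\}$ in the exponent at the cost of the extra $\mathcal D$. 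The final assertion is then immediate: if $\log P_A\le h_G(\sigma_A)$ and {\tt (SI)}, {\tt (SD)} hold, then $\mathcal I=\mathcal D=0$ and $\max\{h_G,\log P_A\}=h_G$, so both inequalities collapse to $ht(\gamma_A)\le h_G(\sigma_A)$; and {\tt (O)} gives $P_A=1$, hence $\log P_A=0\le h_G(\sigma_A)$ (using $h_G\ge 0$), which is the parenthetical special case.

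\textbf{Main obstacle.} The hard part will be Step 2: making the truncation maps genuinely completely positive with \emph{rank} (not merely dimension) of the order claimed, while the vertex set at time $n$ keeps growing — this is exactly where non-local-finiteness bites and where the KMS state is indispensable, since without a gauge-invariant state controlling the tails there is no canonical finite-dimensional range algebra of the right size. A secondary technical point is verifying that the $\log P_A$ contribution is genuinely unavoidable in the first bound yet can be absorbed into $\max\{h_G,\log P_A\}$ in the second, which requires the combinatorial inequality $L(m,n)\subset L_p(m,n)L(m-p,n)$ together with {\tt (SD)} to be deployed at the level of cp-ranks rather than mere cardinalities.
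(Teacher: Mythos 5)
Your outline follows essentially the same route as the paper's proof: completing the family $\{S_\alpha S_\alpha^*\}$ to a partition of unity using the orthogonal division (which is where the $P_A^p$ factor enters), bounding the resulting cp-rank by the cardinality of the extended word set, relating that count to $h_G(\sigma_A)$ up to $\mathcal I$ via loop counts and to $\max\{h_G(\sigma_A),\log P_A\}$ up to $\mathcal I+\mathcal D$ via the cut-point/distortion estimate, and invoking the KMS state only through its GNS representation and the conditional expectation onto the gauge-fixed subalgebra (together with an almost-invariant vector on $\Z$) to pass from $(\mathcal O_A)^\Gamma$ to all of $\mathcal O_A$. The one point worth noting is that in the paper the gauge-invariant part needs no KMS state at all -- the embedding $\rho_{m,n}$ into the matrix amplification is already a $*$-homomorphism and the rank control comes from showing $\rho_{m,n}\gamma_A^l(f)=\sum_\kappa X_\kappa\otimes S_\kappa^*S_\kappa$ with $\kappa$ ranging over the finite set $\overline{L(1,n)}$ -- but this is a refinement of, not a departure from, your plan.
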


 \begin{thm} \label{thm:2}
Assume that {\tt (SH)}, {\tt (UCF)} and {\tt (FS)} hold for $A$. 
Then, it holds that
\begin{equation*}
 h_{G}(\sigma _A)\leq ht (\gamma _A).
\end{equation*}
\end{thm}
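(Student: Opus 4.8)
\textbf{Proof proposal for Theorem \ref{thm:2}.}

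The plan is to construct, for each $\delta>0$, completely positive approximations $(\phi,\psi,\mathcal A)$ for the relevant finite sets $\bigcup_{j=0}^{n-1}\gamma_A^j(\omega)$ whose rank grows at worst like the number of admissible words of length $n$ landing in a fixed cylinder, and then to read off $h_{\mathrm G}(\sigma_A)$ as a lower bound for $ht(\gamma_A,\omega)$ for a suitable $\omega$. More precisely, I would take $\omega$ to be a finite subset of $\Omega_A$ (say $\omega=\{S_a S_a^*, S_a S_b^*,\dots\}$ for finitely many symbols $a,b$) containing enough of the diagonal projections $S_a S_a^*$ that the iterates $\gamma_A^j(S_a S_a^*)=\sum_{|\mu|=j,\,t(\mu\text{-reversed})\dots}S_\mu S_a S_a^* S_\mu^*$ involve, after $n$ steps, exactly the cylinders $[\mu a]$ with $|\mu|=n$. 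Since $ht(\gamma_A)=\sup_\omega ht(\gamma_A,\omega)$, it suffices to exhibit one $\omega$ with $ht(\gamma_A,\omega)\ge h_{\mathrm G}(\sigma_A)$; by the second bullet in Section \ref{subsec23} one may even pass to an increasing exhaustion $\{\omega_k\}$ and take $\sup_k$.

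The key device for the lower estimate on $rcp$ is a standard dimension-counting argument (as in \cite{BG2000,C1996}): if $\mathcal D_A$ (or its relevant finite-dimensional corner) contains $N$ mutually orthogonal nonzero projections $p_1,\dots,p_N$ that all appear (up to $\delta$) in $\psi\circ\phi$, then $\mathrm{rank}\,\mathcal A\ge N$, because a cp map that is $\delta$-close to the identity on a set of orthogonal projections cannot compress them below their own number. Applying this with the projections $\{S_\mu S_a S_a^* S_\mu^* : |\mu|=n,\ \mu a \in \Sigma_A^*\}$ — which are pairwise orthogonal and all lie in $\bigcup_{j=0}^{n-1}\gamma_A^j(\omega)$ for appropriate $\omega$ — gives
\[
rcp\Big(\bigcup_{j=0}^{n-1}\gamma_A^j(\omega);\delta\Big)\ \ge\ \#\{\mu\in\Sigma_A^*:|\mu|=n,\ \mu a\in\Sigma_A^*\},
\]
and after taking $\frac1n\log(\cdot)$ and $n\to\infty$ this quantity converges to $h_{\mathrm G}(\sigma_A)$ by the Gurevich formula in Section \ref{ss:Ge} (here one uses mixing, from {\tt (SH)}, to replace the periodic-orbit count by the cylinder-word count and to see the limit is independent of $a$; these comparisons are classical in Sarig's framework \cite{Sariglecturenote,Sarig1999}). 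Since this holds for every $\delta>0$ and the $rcp$ is known finite (Kirchberg--Wassermann embedding, as recalled in Section \ref{subsec23}, together with {\tt (UCF)} ensuring all the sums defining $\gamma_A$ are finite), we get $ht(\gamma_A,\omega)\ge h_{\mathrm G}(\sigma_A)$ and hence $ht(\gamma_A)\ge h_{\mathrm G}(\sigma_A)$.

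The main obstacle I anticipate is twofold and essentially the same issue one meets in setting up $\gamma_A$ in the non-locally-finite case: first, one must verify that the projections $S_\mu S_a S_a^* S_\mu^*$ genuinely belong to $\bigcup_{j=0}^{n-1}\gamma_A^j(\omega)$ — i.e.~that iterating $\gamma_A$ on the chosen generators produces these diagonal elements with coefficient $1$ and does not dilute them by spreading mass over infinitely many terms — which requires {\tt (FS)} to rewrite $S_j^* S_j$ as a finite combination (Lemma \ref{finsum}) whenever it intervenes, and {\tt (UCF)} to keep each application of $\gamma_A$ a finite sum. Second, one has to be careful that these projections are genuinely nonzero and mutually orthogonal in the canonical representation $\pi:\mO_A\to B(l^2(\Sigma_A))$; this is where $\mu a\in\Sigma_A^*$ (admissibility) and the orthogonality relations \eqref{EL1}--\eqref{EL3} are used, and it is the place where the argument would fail without a good handle on which words are admissible. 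Once these bookkeeping points are settled, the dimension bound and the passage to the Gurevich entropy are routine.
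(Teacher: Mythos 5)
There is a genuine gap at the heart of your counting argument. The completely positive $\delta$-rank $rcp\bigl(\bigcup_{j=0}^{n-1}\gamma_A^j(\omega);\delta\bigr)$ only gives you control of $\psi\circ\phi$ on the elements of $\bigcup_{j=0}^{n-1}\gamma_A^j(\omega)$ themselves, and for a diagonal generator $S_aS_a^*\in\omega$ the iterate $\gamma_A^j(S_aS_a^*)=\sum_{|\mu|=j}S_\mu S_aS_a^*S_\mu^*$ is the \emph{sum} of the exponentially many orthogonal projections you want to count, not the individual summands. Knowing $\lVert\psi\phi(q)-q\rVert<\delta$ for a single projection $q=\sum_{i=1}^N p_i$ gives no lower bound on $\mathrm{rank}\,\mathcal A$ in terms of $N$ (a rank-one $\mathcal A$ can approximate a single projection), so the orthogonal-projection lemma you invoke simply does not apply to the set at hand. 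You flag this as your ``main obstacle'' but then attribute it to the finiteness issues addressed by {\tt (FS)} and {\tt (UCF)}; those conditions make each application of $\gamma_A$ a finite sum, but they cannot place the individual terms $S_\mu S_aS_a^*S_\mu^*$ inside $\bigcup_j\gamma_A^j(\omega)$. Nor can you enlarge $\omega$ to contain them, since $\omega$ must be a \emph{fixed} finite set before the limit $n\to\infty$ is taken; with $\omega$ fixed you only ever control finitely many such projections. This is precisely why lower bounds for $ht$ are not obtained by naive counting on the iterates but by restricting to an invariant commutative subalgebra or by a variational inequality. (A smaller imprecision: the word count $\#\{\mu:|\mu|=n,\ \mu a\in\Sigma_A^*\}$ need not converge to $h_G(\sigma_A)$ after taking $\tfrac1n\log$; by Proposition \ref{lem:la1109} its $\limsup$ can be as large as $h_G(\sigma_A)+\mathcal I$. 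Only the $\liminf\ge h_G(\sigma_A)$ is available, which would suffice for a lower bound if the counting step were valid.)

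For contrast, the paper's proof avoids any direct $rcp$ lower bound. It restricts $\gamma_A$ to the commutative subalgebra $\mathcal D_A\cong C(X_A)$ (whose spectrum $X_A$ is compact, unlike $\Sigma_A$), uses the extension $\hat\sigma_A$ and the commutative diagram of Proposition \ref{commprop}, and then chains: the Gurevich variational principle $h_G(\sigma_A)=\sup_\mu h_\mu(\sigma_A)$; the passage from each invariant measure $\mu$ to an invariant state $\varphi_\mu$ on $C(X_A)$ with $h_\mu(\sigma_A)\le h^{ST}_{\varphi_\mu}((\hat\sigma_A)_*)=h_{\varphi_\mu}((\hat\sigma_A)_*)$ via Sauvageot--Thouvenot entropy and nuclearity; the non-commutative variational inequality $h_\varphi(\gamma)\le ht(\gamma)$; and finally monotonicity $ht(\gamma_A|_{\mathcal D_A})\le ht(\gamma_A)$. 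Note that even the seemingly cheaper route $ht(\gamma_A)\ge ht(\gamma_A|_{\mathcal D_A})=h_{\mathrm{top}}(\hat\sigma_A)\ge h_G(\sigma_A)$ requires working on the compact space $X_A$ rather than on $\Sigma_A$; whether $h_{\mathrm{top}}(\hat\sigma_A)$ actually equals $h_G(\sigma_A)$ is left open in Section \ref{s:problems}. If you want to salvage a more hands-on argument, you would have to run it inside $\mathcal D_A\cong C(X_A)$ with Voiculescu's open-cover machinery, which effectively reproduces the paper's route.
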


Note that Theorem \ref{thm:main} immediately follows from Theorems \ref{thm:1} and \ref{thm:2}.

\begin{example}\label{ex:1227}
There exists a (uniformly) locally finite  $A$ with $\mathcal I >0 $.
For example, consider the $0$-$1$ matrix $A$ given by  
\begin{equation}\label{eq:counter_example}
A(i,j) =
\begin{cases}
1 \quad &  (\text{$i\in \{ 2j-1, 2j\}$})\\
0 \quad & (\text{otherwise})
\end{cases}.
\end{equation}
Then we have $I(m,n) = 2^{m-1}\cdot n$,  so that $\mathcal I(n_1) = \log 2$ for any $n_1\in \mathbb N$.

There also exists a $0$-$1$ matrix $A$ with $h_G(\sigma_A) < \log P_A$.
For example, consider the $0$-$1$ matrix $A$ given by  
\begin{equation}\label{eq:counter_example}
A(i,j) =
\begin{cases}
1 \quad &  (\text{$i=1$ or $j\in \{ i-1, i, i+2, i+3, i+5, i+6\}$})\\
0 \quad & (\text{otherwise}).
\end{cases}
\end{equation}
Then we have $P_A = 8$ since we need 8 $A$-orthogonal divisions to separate 8 letters $2, 3, 4, 5, 6, 7, 8, 9 \in \N$.
On the other hand, we have $M_A =7$.
Furthermore, as a general fact, it holds that $h_G(\sigma_A) \le \log M_A$ since $h_G(\sigma _A)= \lim _{m\to \infty} \frac{1}{m}\log A^{m}(j,j)$ for any $j\in \N$ (cf.~\cite{Sariglecturenote}; refer also to Lemma \ref{lem:la} and \eqref{eq:1221a}).
In summary, we have
\[
h_G(\sigma_A) \leq \log M_A < \log P_A.
\]
\end{example}

\begin{example}\label{example:1}
The renewal shift (given in \eqref{eq:renewal}) satisfies {\tt (SH)}, {\tt (FS)}, {\tt (AF)},  {\tt (SI)}, {\tt (SD)} and {\tt (O)} and admits a KMS state for the canonical gauge action $\Gamma$:
\begin{itemize}
\item  {\tt (SH)} holds because for any $i, j\in \N$, $A^{i}(i,j)=A(i,i-1)\cdots A(2,1)A(1,j) =1$;
 \item  {\tt (FS)} holds with an $A$-infinite emitter $1$ and $A$-finite emitters $2, 3, \ldots$;
  \item {\tt (AF)} and {\tt (SI)} hold with $N(m,n)=\{1,\ldots ,m+n\}$;
  \item {\tt (SD)} holds as mentioned in Remark \ref{rem:1117};
 \item {\tt (O)} holds with $m_A=2$ and $H^{(1)} =\{1\}$, $H^{(2)} =\{2, 3,\ldots \}$;
 \item A KMS state (with the inverse temperature parameter $\beta = \log 2$) for $\Gamma$ exists due to Proposition 4.117, Remark 5.15 and Corollary 5.29 of \cite{Raszeja}.
 \end{itemize}
Hence, it follows from Theorems \ref{thm:1} and \ref{thm:2} that   $ht(\gamma _A) = h_G(\sigma _A) $ holds.
\end{example}

\begin{example}\label{example:2}
Another example to which Theorems  \ref{thm:1} and \ref{thm:2} are applicable is the lazy renewal shift given in \eqref{eq:lazyrenewal}.
In fact, {\tt (SH)}, {\tt (FS)}, {\tt (AF)} and {\tt (SI)} hold for the same reason as  the renewal shift.
Similarly, {\tt (PCF)} with $P_A =2$ holds, by taking $m_A=3$, $H^{(1)} =\{1\}$, $H^{(2)} =\{2, 4,\ldots \}$ and $H^{(3)} =\{3, 5,\ldots \}$.
To see {\tt (SD)}, for each $m,n,p,q\in \mathbb N$ with $q\le p \le m$, we let 
\[
a_{q,p}(m,n)\coloneqq \lv \left\{\{\alpha _i\}_{i=q}^p : \{\alpha _i\}_{i=1}^m \in L(m,n), \; \alpha _ q=1\right\}\rv
\]
and
\[
b_{q,p}(m,n)\coloneqq \lv \left\{\{\alpha _i\}_{i=q}^p : \{\alpha _i\}_{i=1}^m \in L(m,n), \; \alpha _ q\neq 1\right\}\rv.
\]
Then, with $c_{q,p}(m,n) \coloneqq  a_{q,p}(m,n) + b_{q,p}(m,n)$, it holds that
\[
\vert L(m,n)\vert =c_{1,m}(m,n), \quad 
\vert L(m-p,n)\vert =c_{p+1,m}(m,n), \quad
  \vert L_p(m,n)\vert =c_{1,p}(m,n) 
\]
and
\[
\left(
\begin{array}{c}
a_{q-1,p}(m,n)\\
b_{q-1,p}(m,n)
\end{array}
\right)= B
\left( \begin{array}{c}
a_{q,p}(m,n)\\
b_{q,p}(m,n)
\end{array}\right)
 \quad \text{with} \; 
B=
\left(\begin{array}{cc}
1 & 1\\
1 & 2
\end{array}
\right) .
\]
Furthermore, it holds that
\[
\left( \begin{array}{c}
a_{p,p}(m,n)\\
b_{p,p}(m,n)
\end{array}\right) = 
\left( \begin{array}{c}
1\\
n + m- p -1
\end{array}\right) 
\]
Hence, a straightforward calculation (starting from the diagonalization of $B$) gives that, by denoting $\lambda _+\coloneqq  \frac{3+\sqrt 5}{2}$ and $\lambda _-\coloneqq  \frac{3-\sqrt 5}{2}$,
\begin{equation}\label{eq:1212a}
\vert L(m,n)\vert = (C_1n +D_1)\lambda_+^{m-1} +(C_2n+D_2) \lambda _-^{m-1} 
\end{equation}
with constants $C_1, C_2 >0$ and $D_1$ and $D_2$ being independent of $n$ and $m$, and similar estimates hold for $\vert L_p(m,n)\vert$ and $\vert L(m-p,n)\vert $.
Thus, we get
\[
\frac{\vert L_p(m,n)\vert \cdot \vert L(m-p,n)\vert }{\vert L(m,n)\vert } \le C (n+m -p) +D
\]
with constants $C>0$ and $D$ being independent of $n, m$ and $p$.
This immediately concludes {\tt (SD)}.
On the other hand, by \eqref{eq:1212a}, one can easily see that $h_G(\sigma _A)= \log \lambda _+ $ (see Proposition \eqref{lem:la1109} for details), so that $h_G(\sigma _A) >\log P_A$.

The existence of a KMS state can be seen by an observation similar to one for the renewal shift in Example \ref{example:1}, as follows.
 Recall that $[1]=\{\{x_i\}_{i=1}^\infty\in\Sigma_A : x_1=1\}$.
 For $\beta=\log \lambda_+$,
\begin{align*}
\sum_{n=1}^{\infty} e^{-n \left(h_G(\sigma_A)-\beta\right)} e^{-\beta n}\lv\{ x\in[1]:\sigma^nx=x \} \rv
&=\sum_{n=1}^{\infty} e^{-n h_G(\sigma_A)} \lv\{ x\in[1]:\sigma^nx=x \} \rv\\
&=\sum_{n=1}^{\infty} \lambda_+^{-n} \lv L(n,2)\rv
=\infty.
\end{align*}
This shows that the lazy renewal shift (with the potential $-\beta$) is recurrent (moreover, one can show that this is actually positive recurrent).
Thus generalized Ruelle's Perron--Frobenius theorem implies that there is some $e^{\beta}$-conformal measure $\nu$ on $\Sigma_A$, which is finite on each cylinder set, and we only need to show that $\nu$ is a finite measure.
(For the definition of $e^{\beta}$-conformal measures and the connection between it and a KMS state, see Definition 1.21, Corollary 1.39, Theorem 5.13 and Remark 5.15 in \cite{Raszeja}.)
Recall that an $e^{\beta}$-conformal measure corresponds to an eigenmeasure of the Ruelle operator with the eigenvalue $e^{h_G(\sigma_A)-\beta}=1$ and one has
\[
\nu([1])=\int e^{-\beta}\sum_{y:\sigma_Ay=x}1_{[1]}(y)d\nu(x)
=\lambda_+^{-1}\sum_{s:A(1,s)=1}\nu([s])
=\lambda_+^{-1}\nu\left(\Sigma_A\right).
\]
Therefore, $\nu$ is a finite measure since $\nu([1])<\infty$ and there exists a KMS state.

Consequently, by virtue of  Theorems \ref{thm:1} and \ref{thm:2},  we get the optimal estimate
 \[
  ht(\gamma _A) = h_G(\sigma _A) 
 \]
 for the lazy renewal shift $A$.
\end{example}

\begin{remark}
The above approach for the existence of a KMS state, passing through an $e^{\beta}$-conformal measure, seems to apply not only to renewal type shifts but to more general Markov shifts, under appropriate assumptions such as {\tt (SH)} and {\tt (FS)}.
This would be established in the forthcoming paper.
\end{remark}


\section{Problems}\label{s:problems}

 \subsection{Spectral radius}
When a $0$-$1$  matrix $A$ is a finite matrix, it is well known that there is a nice relation between $h_G(\sigma _A)$  and the spectral radius $r(A)$ of $A$, that is, $h_G(\sigma _A)=\log r(A)$.
However, the definition of the `spectral radius' $r(A)$ of a $0$-$1$  matrix $A=\{A(i,j)\}_{(i,j)\in D^2}$ with an infinite alphabet $D$   is far from trivial. 
In the context of graph theory, it is usual to assume that $A$ is uniformly locally finite and define $r(A)$ as the spectral radius of the linear operator $L_A: l^2(D)\to l^2(D)$ given by
\begin{equation}\label{dfn:la}
L_A\left(\alpha \right) \coloneqq  \left\{ \sum_{j=1}^\infty A(i, j)\alpha _j \right\}_{i\in\mathbb N} \qquad (\alpha =\left  \{\alpha _i\right\}_{i\in\mathbb N}   \in l^2(D)).
\end{equation}
In fact, $A$ is uniformly locally finite if and only if $L_A: l^2(D)\to l^2(D)$ is bounded, and moreover, it holds that
\begin{equation}\label{eq:1117g}
r(L_A: l^2(D)\to l^2(D)) = \sup \left\{ r(F) : \text{$F$: finite submatrix of $A$}\right\}
\end{equation}
(cf.~\cite{Mohar82}), where $r(S: E\to E)$ is the spectral radius of a bounded operator $S: E\to E$ on a Banach space $E$.
On the other hand, it is known that if $\sigma _A$ is mixing, then $h_G(\sigma _A)$ also coincides with the right-hand side of \eqref{eq:1117g}, refer to e.g.~\cite{Sariglecturenote}. 
Consequently, one gets 
\[
h_G(\sigma _A) = r(L_A: l^2(D)\to l^2(D)). 
\]

However, several important countable Markov shifts including the renewal shift violate the local finiteness.
Thus, according to the following lemma, we consider $L_A: l^1(D)\to l^1(D)$ by \eqref{dfn:la} with $l^1(D)$ instead  $l^2(D)$.
Recall Definition \ref{dfn:ucf} for $M_A$.
\begin{lem}\label{lem:la}
 {\tt (UCF)} holds for $A$  if and only if $L_A$ is bounded on $l^1(D)$.
In fact, 
\[
\Vert L_A\Vert _{l^1} =M_A.
\]
\end{lem}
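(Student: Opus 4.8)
The plan is to compute the operator norm of $L_A$ acting on $\ell^1(D)$ directly and then read off the equivalence with {\tt (UCF)}. Recall that for an operator on $\ell^1$ whose matrix has entries $A(i,j)$, the norm is the supremum over \emph{columns} of the $\ell^1$-norms of the columns, i.e.\ $\Vert L_A\Vert_{\ell^1} = \sup_{j\in D}\sum_{i\in D} \lvert A(i,j)\rvert = \sup_{j\in D}\lvert C_j\rvert$. The second equality holds because $A(i,j)\in\{0,1\}$, so $\sum_i A(i,j) = \lvert\{i : A(i,j)=1\}\rvert = \lvert C_j\rvert$. Hence $\Vert L_A\Vert_{\ell^1} = \sup_{j} \lvert C_j\rvert$, which is finite (and equals $M_A$) precisely when {\tt (UCF)} holds, and is $+\infty$ (so $L_A$ is unbounded, or rather not a bounded operator on $\ell^1(D)$) otherwise.

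In more detail, the steps are as follows. First I would establish the upper bound $\Vert L_A\alpha\Vert_{\ell^1}\le \big(\sup_j\lvert C_j\rvert\big)\Vert\alpha\Vert_{\ell^1}$ for $\alpha\in\ell^1(D)$ with finite support (or more carefully, verifying the sum defining $L_A\alpha$ converges): writing $L_A\alpha = \{\sum_j A(i,j)\alpha_j\}_i$, one has
\[
\Vert L_A\alpha\Vert_{\ell^1} = \sum_{i} \Big\lvert \sum_j A(i,j)\alpha_j\Big\rvert \le \sum_i \sum_j A(i,j)\lvert\alpha_j\rvert = \sum_j \lvert\alpha_j\rvert \sum_i A(i,j) = \sum_j \lvert\alpha_j\rvert\,\lvert C_j\rvert \le M_A \Vert\alpha\Vert_{\ell^1},
\]
where the interchange of summation is justified by Tonelli's theorem since all terms are nonnegative. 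This shows that if {\tt (UCF)} holds, then $L_A$ is bounded with $\Vert L_A\Vert_{\ell^1}\le M_A$. Second, for the reverse inequality, I would test $L_A$ on the standard basis vectors $e_j$ of $\ell^1(D)$: $L_A e_j = \{A(i,j)\}_i$, so $\Vert L_A e_j\Vert_{\ell^1} = \sum_i A(i,j) = \lvert C_j\rvert$, while $\Vert e_j\Vert_{\ell^1}=1$. Taking the supremum over $j$ gives $\Vert L_A\Vert_{\ell^1}\ge \sup_j\lvert C_j\rvert = M_A$. Combining the two bounds yields $\Vert L_A\Vert_{\ell^1}=M_A$.

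Finally, for the equivalence: if {\tt (UCF)} fails, then $\sup_j\lvert C_j\rvert = \infty$, and the computation $\Vert L_A e_j\Vert_{\ell^1} = \lvert C_j\rvert$ on unit vectors shows $L_A$ is not bounded on $\ell^1(D)$; conversely boundedness was shown above to follow from {\tt (UCF)}. (One should be slightly careful about what ``$L_A$ is bounded on $\ell^1(D)$'' means when some column has infinitely many $1$'s — then $L_A e_j\notin\ell^1(D)$, so $L_A$ does not even map $\ell^1(D)$ into itself; this case is naturally subsumed under ``not bounded''.)

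I do not expect any serious obstacle here — the only point requiring a little care is the Fubini/Tonelli interchange of the double sum and, on the soft side, being precise about the domain issue when columns are infinite (i.e.\ distinguishing ``unbounded operator'' from ``not a well-defined operator $\ell^1\to\ell^1$''); both are handled by the nonnegativity of the entries and by the basis-vector test, respectively.
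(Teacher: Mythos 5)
Your proposal is correct and follows essentially the same route as the paper's proof: the upper bound $\Vert L_A\alpha\Vert_{\ell^1}\le M_A\Vert\alpha\Vert_{\ell^1}$ via the nonnegative interchange of the double sum, and the lower bound $\Vert L_A\Vert_{\ell^1}\ge\lvert C_j\rvert$ by testing on the standard basis vectors. Your additional remark about distinguishing ``unbounded'' from ``not mapping $\ell^1$ into itself'' when a column is infinite is a reasonable point of care that the paper glosses over, but it does not change the argument.
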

\begin{proof}
Assume that $A$ satisfies {\tt (UCF)}. 
For any $\{\alpha_i\}_{i=1} ^\infty \in l^1(D)$, the series $\sum_{j\in D} A(i, j)\alpha_j$ converges absolutely because $ A(i,j)=0$ or $1$ for each $i, j\in\mathbb N$.
Furthermore, since $A(i,j)\vert \alpha _j\vert$ is non-negative for any $i, j\in \mathbb N$, we have
\[
\sum_{i\in D} \left| \sum_{j\in D} A(i, j)\alpha_j \right| \leq \sum_{j\in D} \sum_{i\in D}  A(i, j)|\alpha_j| \leq \sum_{j\in D} M_A |\alpha_j| 
= M_A \sum_{j\in D}|\alpha_j|.
\]
This shows that $L_A $ is a bounded operator on $l^1(D)$ and $\|L_A\| _{l^1}\leq M_A$.

We assume that $L_A: l^1(D)\to l^1(D)$ is bounded.
For each $ j \in D$,  let $(\delta^{(j)} _i)_{i\in D}$ be a sequence defined by
\[
\delta_i ^{(j)} =
\begin{cases}
1 \quad &  (\text{if $j=i$}) \\
0  \quad & (\text{otherwise})
\end{cases}.
\]
Then we have
\[
\left\vert \{ i \in \N :  A(i, j) =1\} \right\vert = \|L_A ((\delta^{(j)} _i)_{i\in D}) \| _{l^1} \leq \|L_A\|_{l^1}
\]
for any $j \in \N$.
Thus, $A$ is uniformly column finite and $M_A \leq \|L_A\|_{l^1}$.
This completes the proof.
\end{proof}
 
On the other hand, a slight modification of the proof of Theorem \ref{thm:1} tells us that, under the assumptions of Theorem \ref{thm:1}, it holds that\footnote{
It will be shown that $ht(\gamma _A)\le \sup _{n\ge N} \limsup _{m\to\infty} \frac{1}{m}\log (\sum_{p=0}^m  \vert L_{m-p}(m,n) \vert P_A ^{p})$ with some large number $N$, see Propositions \ref{mainprop1}, \ref{prop:5.6} and
\eqref{eq:1211dd}.
On the other hand, as in \eqref{eq:1119c}, together with {\tt (AF)}, we can see that
\begin{equation}\label{eq:1221a}
\vert L_{m-p}(m,n)\vert = \sum_{i,j \in \mathbb N, \; j \leq I(p+1,n)} A^{m-p-1}(i, j) \le \Vert L_A ^{m-p-1}\alpha ^{(I(p+1,n))} \Vert _{l^1}
\le \Vert L_A ^{m-p-1} \Vert _{l^1}
\end{equation}
where $\Vert L_A ^{m-p-1} \Vert _{l^1}$ is the operator norm of $L_A ^{m-p-1}$ and $\alpha ^{(k)} = \{ \alpha ^{(k)} _j\}_{j=1}^\infty \in l^1(D)$ is given by
$\alpha ^{(k)} _j =1$ for $j\le k$
and $=0$ for $j> k$.
Hence, the claim follows from the Gelfant formula $\lim_{m\to\infty}\Vert L_A^m\Vert ^{\frac{1}{m}} _{l^1}=r(L_A: l^1(D)\to l^1(D))$, by repeating the argument in the proof of the second inequality of Theorem \ref{mainthm1}. 
}
\[
ht(\gamma _A) \le \max\{ \log r (L_A:l^1 (D ) \to l^1(D )) ,  \log P_A\}. 
\]
Therefore, the following question is natural and important:
\begin{problem}
Under {\tt (SH)} and {\tt (UCF)}, does it hold that
\[
\log r (L_A:l^1 (D ) \to l^1(D ))   \le h_G(\sigma _A) + \mathcal I.
\]
\end{problem}

\subsection{Markov extension}
An important class of countable Markov shifts with an infinite alphabet appears in Hofbauer's Markov extension \cite{Hofbauer1986} (and its generalization by Buzzi \cite{Buzzi1997}) of subshifts with a finite alphabet, such as the $\beta$-shift with a non-algebraic $\beta >1$, the even shift and the Dyck shift.
See Appendix \ref{app:B1} for its definition.
We simply recall that $h_G(\sigma _{A(\Sigma ')}) = h_{\mathrm{top}}(\sigma _{\Sigma '})$, where $A(\Sigma ')$ is the transition matrix of a Markov extension of a subshift $\Sigma '$.
For a large class of Markov extensions $\Sigma _A$ (with $A=A(\Sigma ')$) of subshifts including the $\beta$-shift, it may hold that the \emph{transpose} matrix $A^{tr}$ of $A$ satisfies {\tt (SH)}, {\tt (FS)},  {\tt (AF)}, {\tt (SI)}, {\tt (SD)} and {\tt (O)} (refer to Appendix \ref{app:B1}), while it is known that $h_{\mathrm{top}} (\sigma _A) =h_{\mathrm{top}} (\sigma _{A^{tr}})$ when $A$ is a finite matrix.
However, it is unclear that one can even introduce a canonical cp map $\gamma _A$ on $\mathcal O_A$ (being canonical in the sense of Section \ref{subsec31}, especially satisfying the commutative diagram with $(\sigma _A)_*$) in such a case.
 Therefore, we leave the following problem.
 \begin{problem}
Assume that $A^{tr}$ satisfies {\tt (SH)}, {\tt (FS)}, {\tt (AF)},  {\tt (SI)}, {\tt (SD)} and {\tt (O)}. 
Then, does there exist a canonical cp map $\gamma _A:\mathcal O_A\to \mathcal O_A$ in the sense of Section \ref{subsec31}?
Moreover, $ht (\gamma _A) =ht (\gamma _{A^{tr}})$?
In particular, does it hold that $ht (\gamma _A) =h_{\mathrm{top}} (\sigma _{A})$, where $A$ is the Markov extension of the $\beta$-shift with a non-algebraic $\beta >1$?
 \end{problem}

In \cite{M2005}, Matsumoto considered the non-commutative entropy of the canonical cp map $\gamma _{\mathcal L(\Sigma ')}$ of his C$^*$-algebra $\mathcal O_{\mathcal L(\Sigma ')}$ associated with (the $\lambda$-graph $\mathcal L(\Sigma ')$ of) subshifts $\Sigma '$ over a finite symbol set. 
He showed that 
\[
ht(\gamma _{\mathcal L(\Sigma ')} \vert _{\mathcal D_{\mathcal L(\Sigma ')}}) =h_{\mathrm{top}}(\tilde{\sigma} _{\Sigma '}) >h_{\mathrm{top}}(\sigma _{\Sigma '})
\]
when $\Sigma '$ is the Dyck shift, where $\mathcal D_{\mathcal L(\Sigma ')}$ is the `diagonal' commutative C$^*$-subalgebra of $\mathcal O_{\mathcal L(\Sigma ')}$ and $\tilde{\sigma} _{\Sigma '}$ is the continuous left-shift operation on a compact Hausdorff space consisting of infinite words.
On the other hand, if $A$ satisfies {\tt (SH)}, {\tt (UCF)} and {\tt (FS)} and $X_A$ is compact, then 
\[
ht(\gamma _A\vert _{\mathcal D_A}) =h_{\mathrm{top}}(\hat \sigma _A) \ge h_G(\sigma _A).
\]
See Appendix \ref{app:B2} for details. 
The Markov extension of the Dyck shift satisfies these conditions (while {\tt (PCF)} is not satisfied; cf.~\cite{TY2022}), but it is unclear whether $h_{\mathrm{top}}(\hat \sigma _A) = h_G(\sigma _A)$.
Therefore, the following question seems to be natural.
 \begin{problem}
Assume that $A$ satisfies {\tt (SH)}, {\tt (UCF)} and {\tt (FS)}, and $X_A$ is compact. Then, does it hold that $h_{\mathrm{top}}(\hat \sigma _A) = h_G(\sigma _A)$? 
In particular,  does it hold for the Markov extension of the Dyck shift?
 \end{problem}

\section{Proof of Theorem \ref{thm:1}}

 Throughout this section, we assume that {\tt (SH)}, {\tt (FS)},  {\tt (AF)} and {\tt (PCF)}  hold for $A$ and there exists a KMS state on $\mO _A$. 
We first prepare notations for the proof of Theorem \ref{thm:1} (Section \ref{ss:1np}).
Next, we give the proof of Theorem \ref{thm:1} with $\left(\mO _A\right)^\Gamma$ instead of $\mO_A$ (Section \ref{ss:1goa}). In this step, the existence of a KMS state is not needed.
Finally, we complete the proof of Theorem \ref{thm:1} in Section \ref{ss:1oa}.
\subsection{Preliminary}\label{ss:1np}
Recall that $\vert \alpha \vert =m$ for  $\alpha = \{ \alpha _j\}_{j=1}^m \in  \Sigma _A^*\setminus \{e\}$ and $\vert e\vert =0$, where $e \in \Sigma _A^*$ is the empty word.
Recall also that $t(\alpha) = \alpha_m$ for $\alpha = \{ \alpha _j\}_{j=1}^m \in \Sigma _A^*\setminus \{e\}$ and $t(e)=1$.
Note that 
\begin{equation}\label{eq:1117}
A(i,j) =0 \quad \text{if and only if} \quad S_i S_j =0,
\end{equation}
so that $\alpha \not\in \Sigma _A^*$ if and only if $S_\alpha = S_{\alpha _1}\cdots S_{\alpha _m}=0$.
Let $w(m, n) \coloneqq  |L(m, n)|$ for $m,n\in \mathbb N$. Then, it follows from {\tt (UCF)} that $w(m, n)  <\infty$.

The following estimate is important to prove Theorem \ref{thm:1}. 
\begin{prop}\label{lem:la1109}
Assume that {\tt (SH)}, {\tt (UCF)} and {\tt (FS)}  hold.  
Then, for any sufficiently large $n\in \N$,
\[
h_G(\sigma _A) \le \liminf_{m\to\infty} \frac{1}{m} \log w(m,n) \le \limsup_{m\to\infty} \frac{1}{m} \log w(m,n) \le h_G(\sigma _A) +  \mathcal I. 
\]
\end{prop}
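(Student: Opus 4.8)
The plan is to connect $w(m,n) = |L(m,n)|$ with the combinatorial quantities that compute the Gurevich entropy, namely the counting functions $A^k(j,j)$ for periodic orbits and the growth of admissible words. First I would recall the two standard facts (stated earlier and in \cite{Sariglecturenote}): under {\tt (SH)} the map $\sigma_A$ is mixing, so $h_G(\sigma_A) = \lim_{m\to\infty}\frac{1}{m}\log A^m(j,j)$ for every fixed $j\in\N$, and moreover $h_G(\sigma_A)$ equals the supremum of $r(F)$ over finite submatrices $F$ of $A$, equivalently $h_G(\sigma_A) = \lim_m \frac1m\log\bigl(\sum_{i,j\le n} A^m(i,j)\bigr)$ once $n$ is large enough that $[1,n]$ contains a recurrent vertex and the relevant loops. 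Fixing $n_0$ from {\tt (AF)} and enlarging $n$ if necessary so that $1\le n$, I would write
\begin{equation*}
w(m,n) = |L(m,n)| = \#\{\{\alpha_i\}_{i=1}^m\in\Sigma_A^* : \alpha_m\le n\} = \sum_{j\le n}\ \sum_{i\in\N} A^{m-1}(i,j),
\end{equation*}
where the inner sum over $i$ is finite by {\tt (UCF)} (it is supported on $N(m,n)\subset[1,I(m,n)]$).

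For the lower bound, I would restrict the sum: $w(m,n)\ge A^{m-1}(j_0,j_0)$ for a fixed recurrent $j_0\le n$, so $\liminf_m \frac1m\log w(m,n)\ge h_G(\sigma_A)$. (Alternatively, one restricts to a finite submatrix $F$ on $[1,n]$ with $r(F)$ close to $h_G$, and bounds $w(m,n)$ below by the corresponding entry sum of $F^{m-1}$.) The middle inequality $\liminf\le\limsup$ is trivial. The substantive estimate is the upper bound: I would bound the sum over $i$ by the number of vertices that can reach $[1,n]$ in $m-1$ steps, which is exactly $|N(m,n)|\le I(m,n)$, times the maximum number of length-$(m-1)$ paths from any single such vertex into $[1,n]$. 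More precisely, using the decomposition by the vertex visited at an intermediate time or, more directly, writing
\begin{equation*}
w(m,n) = \sum_{j\le n}\sum_{i\in N(m,n)} A^{m-1}(i,j) \le |N(m,n)|\cdot \max_{i\in N(m,n)}\sum_{j\le n} A^{m-1}(i,j),
\end{equation*}
and then controlling $\max_i \sum_{j\le n}A^{m-1}(i,j)$ by $w(m-1,n)$-type quantities or by $A^{m}(j_0,j_0)$ after prepending/appending a bounded connecting word (here mixing of $\sigma_A$ and fixing the recurrent $j_0\le n$ is used, together with {\tt (AF)} to guarantee the needed monotonicity $N(m-1,n)\subset N(m,n)$). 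Taking $\frac1m\log$ and $\limsup_{m\to\infty}$, the factor $|N(m,n)|\le I(m,n)$ contributes $\limsup_m \frac{\log I(m,n)}{m}$, and then $\limsup_{n\to\infty}$ of that is $\mathcal I$ by Definition \ref{dfn:SI}; the remaining factor contributes $h_G(\sigma_A)$.

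The main obstacle I anticipate is the upper bound bookkeeping: turning the naive inequality $w(m,n)\le |N(m,n)|\cdot(\text{paths from one vertex})$ into something whose exponential growth rate is genuinely $h_G(\sigma_A)$ rather than something a priori larger. The issue is that a single vertex $i\in N(m,n)$ far from $[1,n]$ might support many paths, and one must reorganize these paths so that their count is comparable to returns to a fixed recurrent state — this is where mixing, the finiteness $|C_j|\le M_A$ from {\tt (UCF)}, and crucially the monotonicity from {\tt (AF)} (Lemma \ref{seqofsets}) enter, ensuring that the ``reachable in $m-1$ steps'' sets are nested and that the transfer of a path to a periodic word costs only a bounded, $n$-independent (though possibly $m$-dependent but subexponential) factor. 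I would handle this exactly as in the argument sketched in the footnote to \eqref{eq:1221a}, comparing $w(m,n)$ with $\|L_A^{m-1}\alpha^{(I(m,n))}\|_{l^1}$ and then with $I(m,n)\cdot\|L_A^{m-1}\|$-type bounds; but for the clean statement here the periodic-orbit comparison is cleaner, so I would phrase the proof in terms of $A^k(j_0,j_0)$ and absorb the vertex-counting overhead into the $\mathcal I$ term.
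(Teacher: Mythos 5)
Your overall skeleton matches the paper's: the identity $w(m,n)=\sum_{j\le n}\sum_{i}A^{m-1}(i,j)$, the lower bound by restricting to a single diagonal entry $A^{m-1}(j,j)$ with $j\le n$, and an upper bound of the form $n\cdot I(m,n)\cdot(\text{return count})$ whose $I(m,n)$ factor produces the $\mathcal I$ term. The lower bound is fine as you state it.

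There is, however, a genuine gap in your upper bound, and you half-acknowledge it yourself. To dominate $A^{m-1}(i,j)$ by a return count $A^{m-1+k}(j,j)$ you must prepend a connecting path from $j$ back to $i$, and you need its length $k$ to be bounded \emph{uniformly over all} $i\in N(m,n)$, with a bound independent of $m$; otherwise $\limsup_m\frac1m\log A^{m+k(m)}(j,j)$ need not equal $h_G(\sigma_A)$. Mixing alone does not give this: it provides a finite connecting length for each fixed pair $(i,j)$, but $N(m,n)$ grows with $m$ and $\sup_{i\in N(m,n)}\mathrm{dist}(j,i)$ can be unbounded (for a locally finite mixing matrix it necessarily tends to infinity as $i\to\infty$). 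Neither {\tt (UCF)} nor the nesting from {\tt (AF)} repairs this, and your alternative fallback via $\|L_A^{m-1}\|_{l^1}$ only yields $\log r(L_A:l^1\to l^1)$, which the paper explicitly leaves as an open problem to compare with $h_G(\sigma_A)+\mathcal I$. The missing ingredient is that {\tt (FS)} together with the unitality of $\mO_A$ (part of {\tt (SH)}) forces the existence of an $A$-\emph{infinite emitter} $J$, i.e.\ $A(J,i)=1$ for all $i\ge N_J$. Taking $n\ge N_J$, every large $i$ is reached from $j$ by the path $j\to\cdots\to J\to i$ of length $k_j+1$ with $k_j$ depending only on $j\le n$, and the finitely many remaining pairs $i,j\le n$ contribute a finite maximum $K$ of connecting lengths. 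This uniform bound is what makes $\max_{0\le k\le K,\,j\le n}A^{m+k}(j,j)$ a legitimate majorant with growth rate exactly $h_G(\sigma_A)$; without invoking the infinite emitter your "bounded connecting word" assertion is unsubstantiated.
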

\begin{proof}
We first observe that, for each $m\ge 2$ and $n\in \N$,
\begin{equation}\label{eq:1119c}
\begin{split}
w(m, n) 
&= \left|\left\{ \{ \alpha _j\} _{j=1}^m  \in \N^m: \alpha_m \leq n, \prod_{k=1}^{m-1}A(\alpha_k, \alpha_{k+1}) =1 \right\}\right| \\
& = \sum_{i_1, \cdots, i_m \in \N, i_m \leq n} \prod_{k=1}^{m-1}A(i_k, i_{k+1}) = \sum_{i,j \in \mathbb N, \; j \leq n} A^{m-1}(i, j).
\end{split}
\end{equation}
Hence,  due to the fact that $h_G(\sigma _A)= \lim _{m\to \infty} \frac{1}{m}\log A^{m}(j,j)$ for any $j\in \N$ (cf.~\cite{Sariglecturenote}), we get $h_G(\sigma _A) \le \liminf_{m\to\infty} \frac{1}{m} \log w(m,n) $.
Below we will see the other inequality.

Notice also that $A$ is not locally finite due to {\tt (FS)} and the fact that $\mathcal O_A$ is unital (refer to Section \ref{ss:EL}).
So, there exists an $A$-infinite emitter, denoted by $J\in \N$ (otherwise, $A$ is row finite by {\tt (FS)}, implying that $A$ is locally finite due to {\tt (UCF)}).
Let 
\[
N_J\coloneqq \min \{ j_0\in \N : \forall j\ge j_0, \;  A(J,j) =1\}, 
\]
which is well-defined (and finite) because $J$ is an $A$-infinite emitter.

Fix $n\ge N_J$.
For $1\le i,j \le n$, we introduce
\[
k_j\coloneqq \min \{ l \in \N : A^l(j, J)\neq 0\}, \quad k_{i,j}\coloneqq \min \{ l \in \N : A^l(i,j)\neq 0\}.
\]
By the mixing property in {\tt (SH)}, 
\[
K\coloneqq  \max \left\{ \max _{ 1\le j \le n} (k_j +1), \max  _{ 1\le i,j \le n}k_{i,j}\right\} \le I(m,n).
\]

Let $j \in \{1,\ldots , n\}$.
When $i\ge n$, 
\begin{align*}
A^{m+ k_j+1}(j,j) &\ge A^{k_j}(j,J) A^{m+1}(J,j)\\
& \ge A^{m+1}(J,j)\\
& \ge A(J,i)A^m(i,j) = A^m(i,j).
\end{align*}
On the other hand, when $i <n$,
\begin{align*}
A^{m+ k_{i,j}}(j,j) \ge A^{k_{i,j}}(j,i) A^{m}(i,j) \ge A^{m}(i,j).
\end{align*}
Therefore, we have
 \[
\sum_{i,j\in \mathbb N, \; j \le n} A^{m-1}(i, j) \le n\cdot I(m,n)  \cdot \max _{0\le k\le K, \; 1\le j\le n} A^{m+k}(j,j).
 \]
 By using again the fact that $h_G(\sigma _A)= \lim _{m\to \infty} \frac{1}{m}\log A^{m}(j,j)$ for any $j\in \N$, we get
 \begin{multline*}
 \frac{1}{m} \log w(m,n) 
  \le  \frac{1}{m}\left(\log n +\log I(m,n) \right) + \max _{0\le k\le K, \; 1\le j\le n}  \frac{1}{m} \log A^{m+k}(j,j)\\
  \to h_G(\sigma _A) +\lim_{m\to\infty} \frac{1}{m}  \log I(m,n)
 \end{multline*}
 as $m\to \infty$.
 This completes the proof.
\end{proof}

\begin{remark}\label{rm:1227b}
For a locally finite transition matrix $A$,  the block entropy $h_b(A)$ and the loop entropy $h_l(A)$ can be written as
\begin{align*}
&h_b(A)= \limsup_{m\to\infty} \frac{1}{m} \log\left \vert \{ \alpha =\{ \alpha _k\} _{k=1}^m \in \Sigma _A ^*:  \alpha _1 =  j\}\right\vert ,\\
&h_l(A)= \limsup_{m\to\infty} \frac{1}{m} \log \left\vert \{ \alpha =\{ \alpha _k\} _{k=1}^m \in \Sigma _A ^*:  \alpha _1 = \alpha _m = j\}\right\vert 
\end{align*}
with some $j\in \mathbb N$. These quantities are independent of the choice of $j$ (cf.~\cite{JP06}).
As in the proof of Proposition \ref{lem:la1109}, one can see that
\[
\left\vert \{ \alpha =\{ \alpha _k\} _{k=1}^m \in \Sigma _{A^{tr}} ^*:  \alpha _1 =  j\}\right\vert = \sum_{i\in \mathbb N} A^m(i,j)
\]
and
\[
h_G(\sigma _A)  \le h_b(A^{tr}) \le h_G(\sigma _A) + \mathcal I.
\]
On the other hand, by Example \ref{ex:1227}, there exists a locally finite transition matrix $A$ with $\mathcal I >0$.
\end{remark}

\subsection{Non-commutative topological entropy for $(\mO_A)^\Gamma$}\label{ss:1goa}
We consider the canonical faithful $*$-representation $\pi : \mO_A\to B(l^2(\Sigma _A))$ of $\mO _A$, as before, and simply write $S_j$ for $\pi (S_j)$.
Similarly, we simply write $ \mathcal O_A \subset B(l^2(\Sigma _A))$ if it makes no confusion.

We will compute the non-commutative entropy of $\gamma _A\vert _{(\mathcal O_A)^\Gamma}$ by using the arguments in \cite{BG2000} for Markov shifts with finite symbols.
However, since we are considering Markov shifts with infinite symbols, for each $n, m \in \N$ the sum $\sum_{\alpha \in L(m, n)}S_\alpha S^* _\alpha$ is not equal to the unit of $\mO_A$.
In order to overcome the problem, we assumed {\tt (PCF)}.

Fix a minimal $A$-orthogonal division $H^{(1)}, \cdots, H^{(m_A)}$ with 
\[
P_A(H^{(1)}, \cdots, H^{(m_A)}) = P_A.
\]
Choose $n_1 \in \N$ and a finite subset $F$ of $\{1, \cdots, m_A\}$ with $|F|=P_A$ such that  we have 
\[
\N \backslash \{1, \cdots, n\} \subset \bigcup_{l \in F}H^{(l)}.
\]
for any $n \geq n_1$.
After an appropriate renumbering, we may assume that we have
\[
\N \backslash \{1, \cdots, n\} \subset \bigcup_{l =1}^{P_A} H^{(l)}.
\] 
for $n \geq n_1$.
Then, it follows from Lemma \ref{seqofsets} that for each $k \in \N$, $n \geq \max\{n_0, n_1\}$ and $l_k \in \{1, \cdots, P_A\}$,
\begin{equation}\label{eq:1119}
\N \setminus N(k,n) \subset \bigcup_{l _k=1}^{P_A} H^{(l_k)}.
\end{equation}
Using the strong operator topology on $B(l^2(\Sigma _A))$ (with which $\mO _A$ was identified), for each $k \in \N$, $n \geq \max\{n_0, n_1\}$ and $l_k \in \mathcal P_A\coloneqq \{1, \cdots, P_A\}$ we define
\[
S_{(k, n; l_k)}  \coloneqq  \sum_{\alpha \in H^{(l_k)}\setminus N(k, n)} S_\alpha .
\]
Then, by \eqref{eq:1119}, repeating the argument in Remark \ref{rem:1117}, we have 
\[
S_{(k,n;l_k)}S^* _{(k,n;l_k)} =\sum _{\alpha \in H^{(l_k)}\setminus N(k,n)} S_\alpha S_\alpha ^*.
\]
Thus, it follows from \eqref{eq:1111d} that
\begin{equation}\label{eq:1111c}
\sum_{\alpha \in N(k, n)} S_\alpha S^* _\alpha + \sum_{l_k \in \{1, \cdots, P_A\}} S_{(k,n;l_k)}S^* _{(k,n;l_k)} = 1.
\end{equation}
Furthermore, it follows from \eqref{eq:1117} and Lemma \ref{seqofsets} that for any $p' \ge p+1$ and $l_{p'}\in\mathcal P_A$, it holds that
\begin{equation}\label{eq:1117b}
S_{(p', n; l_{p'})}S_\alpha =0  \quad \text{ for all $\alpha \in N(p,n)$}.
\end{equation}
On the other hand, for any $p'\le p-1$ and $l_{p'}\in\mathcal P_A$, it does not necessarily hold that 
\[
S_\alpha S_{(p', n; l_{p'})} =0 \quad \text{ for all $\alpha \in N(p,n)$ }
\]
(e.g.~the renewal shift with $\alpha =1$).
Taking it into account,  we introduce the notation
\[
S_{\alpha (m-p,n; l_{m-p}) \cdots (1,n; l_{1})  } \coloneqq  S_\alpha S_{(m-p,n; l_{m-p})} \cdots S_{(1,n; l_{1})}
\]
for $\alpha \in L_p(m,n)$ and $(l_{m-p},\ldots ,l_{1})\in \mathcal P_A^{m-p}$ with $1\le p\le m-1$, and define 
\begin{equation*}
\overline{L(m, n)} \coloneqq  \bigcup_{p =0}^m\left\{ \alpha (m-p,n;l_{m-p}) \cdots (1,n;l_{1})  \, :\, \alpha \in L_p(m,n),\;   (l_{m-p},\ldots ,l_{1}) \in \mathcal P_A ^{m-p} \right\},
\end{equation*}
where the set inside the union over $p$ is interpreted as $L(m,n)$ when $p=m$ and as $\left\{ (m,n;l_m) \cdots (1,n;l_{1}) \, :\,  (l_m,\ldots ,l_{1}) \in \mathcal P_A^{m} \right\}$ when $p=0$.
Note that  $\overline{L(m, n)}$ is a finite set.
We write 
\[
\overline{w(m, n)} : = \left|\overline{L(m, n)}\right|.
\] 

\begin{prop}\label{prop:1117}
 Assume that {\tt (SH)}, {\tt (AF)} and {\tt (PCF)}   hold for $A$.
Then we have
\[
\sum_{\alpha \in \overline{L(m, n)}} S_\alpha S^* _\alpha =1
\]    
for any $m \in \N$ and $n\ge \max\{ n_0, n_1\}$. 
\end{prop}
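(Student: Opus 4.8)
The plan is to prove the identity $\sum_{\alpha \in \overline{L(m,n)}} S_\alpha S^*_\alpha = 1$ by induction on $m$, treating $n \ge \max\{n_0, n_1\}$ as fixed throughout, so that \eqref{eq:1111c} and \eqref{eq:1117b} are available. The base case $m=1$ is essentially \eqref{eq:1111c} with $k=1$: one has $N(1,n) = \{1,\ldots,n\}$, so $L_1(1,n) = L(1,n) = N(1,n)$ (the $p=1$ contribution) and $\{(1,n;l_1) : l_1 \in \mathcal P_A\}$ (the $p=0$ contribution), and these are exactly the index sets appearing in \eqref{eq:1111c} with $k=1$.

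For the inductive step, I would start from the decomposition $\overline{L(m+1,n)}$ according to the length $p$ of the ``genuine admissible prefix'' and, crucially, peel off the \emph{first} letter. The key structural observation is that every element of $\overline{L(m+1,n)}$ is of the form $S_\beta \cdot (\text{tail})$ where $\beta \in \N$ ranges over a suitable set and the tail is an element of $\overline{L(m,n)}$ with an extra compatibility condition; more precisely, I expect to rewrite
\begin{equation*}
\sum_{\alpha \in \overline{L(m+1,n)}} S_\alpha S^*_\alpha
= \sum_{\gamma \in \overline{L(m,n)}} \Big( \sum_{\beta} S_\beta S_\gamma S^*_\gamma S^*_\beta \Big),
\end{equation*}
where for each tail $\gamma$ the inner sum runs over those $\beta$ that can legally precede $\gamma$, namely $\beta$ such that $S_\beta S_\gamma \ne 0$. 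Using \eqref{EL3} (equivalently \eqref{eq:1117}), $S_\beta S_\gamma \ne 0$ precisely when $A(\beta, \gamma_1) = 1$ where $\gamma_1$ is the first letter (or first ``$S_{(\cdot)}$-block'') of $\gamma$; and by \eqref{eq:1111c} applied at the appropriate level together with the block-merging identity from Remark \ref{rem:1117}, $\sum_{\beta : A(\beta,i)=1} S_\beta S_\beta^* = 1$ for any fixed $i$ (this is just \eqref{eq:1111d} reorganized). Hence the inner sum collapses to $S_\gamma S^*_\gamma$, and the result follows from the induction hypothesis $\sum_{\gamma \in \overline{L(m,n)}} S_\gamma S^*_\gamma = 1$.

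The bookkeeping obstacle — and the step I expect to be the main difficulty — is verifying that this ``first-letter peeling'' sets up a \emph{bijection} between $\overline{L(m+1,n)}$ and the set of pairs $(\beta, \gamma)$ with $\gamma \in \overline{L(m,n)}$ and $S_\beta S_\gamma \ne 0$, respecting the three-way case split ($p = m+1$, i.e.\ fully admissible; $1 \le p \le m$, a genuine prefix followed by $S_{(\cdot)}$-blocks; $p=0$, all blocks). One has to check that prepending a letter $\beta$ to a length-$p$ admissible prefix of an element of $L(m,n)$ lands in $L_{p+1}(m+1,n)$ — here is where \eqref{eq:1117b} and Lemma \ref{seqofsets} (monotonicity of the $N(k,n)$) are needed, to guarantee that the decreasing chain of $S_{(k,n;l_k)}$'s is indexed consistently after the shift $k \mapsto k+1$ — and conversely that every element of $\overline{L(m+1,n)}$ arises this way. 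The case $p=0$ needs separate care since there is no admissible prefix to prepend to; there one uses \eqref{eq:1111c} directly on the leading block $S_{(m+1,n;l_{m+1})}$ and the identity $\sum_{\alpha \in N(m+1,n)} S_\alpha S^*_\alpha + \sum_{l} S_{(m+1,n;l)} S^*_{(m+1,n;l)} = 1$ to split it into the $p \ge 1$ and $p = 0$ parts.

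Once the combinatorial correspondence is pinned down, the operator-algebraic content is light: it is just repeated use of \eqref{eq:1111c}, \eqref{eq:1111d}, \eqref{EL3}, and the orthogonality $S_\alpha^* S_\beta = 0$ for distinct admissible $\alpha,\beta$ of equal length (from \eqref{EL2}), all of which have been established. An alternative, possibly cleaner route that I would consider if the induction bookkeeping becomes unwieldy is to prove the identity ``from the right'' instead: peel off the last block rather than the first letter, using \eqref{eq:1111c} with $k = $ (current depth) to expand $1$ as $\sum_{\alpha \in N(k,n)} S_\alpha S^*_\alpha + \sum_{l_k} S_{(k,n;l_k)} S^*_{(k,n;l_k)}$ and feed it into the induction hypothesis at level $m$; this matches more directly the way $\overline{L(m,n)}$ is built up in layers, though it requires checking that $S_\gamma S_{(k,n;l_k)} \ne 0$ exactly when $\gamma$'s last letter lies in an appropriate $N$-set, which is again \eqref{eq:1117} combined with the definition of $S_{(k,n;l_k)}$ as a sum over $H^{(l_k)} \setminus N(k,n)$.
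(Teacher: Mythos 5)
Your overall skeleton---induction on $m$ with $n$ fixed, base case from \eqref{eq:1111c}, peeling off the first letter, and treating the all-block ($p=0$) terms separately via \eqref{eq:1117b}---is the same as the paper's. But the core computation in your inductive step is wrong, and it is precisely the step you dismiss as ``light operator-algebraic content.'' You group the double sum by the tail $\gamma\in\overline{L(m,n)}$ and claim that the inner sum $\sum_{\beta}S_\beta S_\gamma S_\gamma^*S_\beta^*$ over admissible first letters $\beta$ collapses to $S_\gamma S_\gamma^*$, justified by the identity $\sum_{\beta:A(\beta,i)=1}S_\beta S_\beta^*=1$. That identity is false: $\sum_{\beta\in C_i}S_\beta S_\beta^*$ is the projection onto $\bigcup_{\beta\in C_i}[\beta]$, a proper subprojection of $1$, since under {\tt (UCF)} the set $C_i$ is finite while the alphabet is infinite (for the renewal shift and $i=1$ it is $S_1S_1^*+S_2S_2^*$). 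And even if it held, $\sum_\beta S_\beta X S_\beta^*\neq X$ in general: in the canonical representation, $\sum_{\beta\in C_{\gamma_1}}S_\beta S_\gamma S_\gamma^*S_\beta^*$ is the projection onto $\sigma_A^{-1}([\gamma])$, not onto $[\gamma]$. So the inner sum does not collapse, and your induction hypothesis is being applied to the wrong sum.

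The fix is to evaluate the two sums in the opposite order, which is what the paper does. Group by the first letter: for the terms with at least one genuine letter, \eqref{eq:1117} gives $\sum_{\beta\in L_{p+1}(m+1,n)}S_\beta f S_\beta^*=\sum_{\gamma\in N(m+1,n)}\sum_{\beta\in L_p(m,n)}S_\gamma S_\beta f S_\beta^* S_\gamma^*$, so summing over all such terms yields $\sum_{\gamma\in N(m+1,n)}S_\gamma\bigl(\sum_{\alpha\in\overline{L(m,n)}}S_\alpha S_\alpha^*\bigr)S_\gamma^*$, and now the \emph{inner} sum over tails is $1$ by the induction hypothesis. The all-block terms similarly reduce to $\sum_{l}S_{(m+1,n;l)}\,S_{(m+1,n;l)}^*$, using \eqref{eq:1117b} to see that inserting $\sum_{\alpha\in\overline{L(m,n)}}S_\alpha S_\alpha^*=1$ after $S_{(m+1,n;l_{m+1})}$ annihilates every summand of $\overline{L(m,n)}$ having a genuine prefix. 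Then \eqref{eq:1111c} at level $k=m+1$ finishes the step. Incidentally, the combinatorial bijection you flag as the main obstacle is the easy part (the paper disposes of it in one line); the real content of the proof is the order in which the two sums are collapsed, and your order does not work.
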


\begin{proof}
Fix an $n\ge \max\{ n_0, n_1\}$. 
We prove the claim by induction for $m \in \N$.
For $m=1$, since $L(1,n)=N(1,n)$,  we already have
\[
\sum_{\alpha \in \overline{L(1, n)}}S_\alpha S^* _\alpha = \sum_{\alpha \in L(1, n)}S_\alpha S^* _\alpha + \sum_{l_1\in \mathcal P_A} S_{(1,n;l_1)}S^* _{(1,n;l_1)} =1
\]
by \eqref{eq:1111c}.
Assume that 
\begin{equation}\label{eq:1220}
\sum_{\alpha \in \overline{L(m, n)}} S_\alpha S^* _\alpha =1
\end{equation}
for some $m \in \N$.
We divide $\sum_{\alpha \in \overline{L(m+1, n)}} S_\alpha S^* _\alpha $ as $\sum_{\alpha \in \overline{L(m+1, n)}} S_\alpha S^* _\alpha = I+ II$, where
\begin{align*}
&I\coloneqq
 \sum _{p=0}^m \sum_{\substack{(l_{m-p},\ldots ,l_{1})\in \mathcal P_A^{m-p}\\ \beta \in L_{p+1}(m+1,n)}}
 S_\beta S_{(m-p,n;l_{m-p})}\cdots S_{(1,n;l_{1})}   S_{(1,n;l_{1})}^* \cdots  S_{(m-p,n;l_{m-p})}^*S^* _\beta ,\\
&II\coloneqq\sum_{(l_{m+1},\ldots ,l_{1})\in \mathcal P_A^{m+1}} S_{(m+1,n;l_{m+1})}\cdots S_{(1,n;l_{1})}   S_{(1,n;l_{1})}^* \cdots  S_{(m+1,n;l_{m+1})}^* .
 \end{align*}
 Notice that, for each $p\in \{0,\ldots , m\}$ and $f \in \mathcal O_A$,
 \[
\sum _{\beta \in L_{p+1}(m+1,n)} S_\beta f S_\beta^* = \sum _{\gamma \in N(m+1,n)} \sum _{\beta \in L_p(m,n)} S_\gamma S_\beta f S_\beta ^* S_\gamma^*
\]
due to \eqref{eq:1117}.
Thus,
\[
I=\sum_{\gamma \in N(m+1, n)} S_\gamma\left(\sum_{\alpha \in \overline{L(m, n)}} S_\alpha S^* _\alpha \right)S^* _\gamma = \sum_{\gamma \in N(m+1, n)} S_\gamma S^* _\gamma 
\]
by \eqref{eq:1220}.
On the other hand, it follows from  \eqref{eq:1117b} and \eqref{eq:1220} that
 \[
 II = \sum_{l_{m+1} \in \mathcal P_A } S_{(m+1,n;l_{m+1})}  \left(\sum_{\alpha \in \overline{L(m, n)}} S_\alpha S^* _\alpha\right) S_{(m+1,n;l_{m+1})}^* = \sum_{l_{m+1} \in \mathcal P_A } S_{(m+1,n;l_{m+1})}  S_{(m+1,n;l_{m+1})}^*.
 \]
Combining these observations, we get
\[
\sum_{\alpha \in \overline{L(m+1, n)}} S_\alpha S^* _\alpha
=\sum_{\gamma \in N(m+1, n)} S_\gamma S^* _\gamma +
\sum_{l_{m+1} \in \{1, \ldots ,P_A\} } S_{(m+1,n;l_{m+1})} S_{(m+1,n;l_{m+1})}^* =1
\]
by \eqref{eq:1111c}.
This completes the proof.
\end{proof}

Let $\left(\mO_A\right)^{''} \subset B(l^2(\Sigma _A))$ be the double commutant of $\mO_A$.
For each $m \in \N$ and $n \ge \max\{n_0,n_1\}$, we define a linear map $\rho_{m, n} \colon \mO_A \to M_{\overline{L(m, n)}} (\C )\otimes \left(\mO_A\right)^{''}$ via
\[
\rho_{m, n}(f) \coloneqq  \sum_{\mu, \nu \in \overline{L(m, n)}} e_{\mu, \nu} \otimes S^* _\mu f S_\nu
\]
for $f \in \mO_A$. 
Here, $M_{\overline{ L(m,n)}}(\C )$ is the C$^*$-algebra consisting of matrices $E=(E(\alpha , \beta))_{\alpha , \beta \in \overline{ L(m,n)}}$ (abbreviated as $M_{\overline{ L(m,n)}}$ when it makes no confusion), 
and $e_{\mu, \nu} = (e_{\mu, \nu}(\alpha , \beta))_{\alpha , \beta \in \overline{ L(m,n)}}$ is the $0$-$1$ matrix such that $e_{\mu, \nu}(\alpha , \beta)=1$ if and only if $(\alpha , \beta) =(\mu, \nu)$.
Since we have $\sum_{\alpha \in \overline{L(m, n)}} S_\alpha S^* _\alpha=1$ by Proposition \ref{prop:1117}, the map $\rho_{m, n}$ is a $*$-homomorphism.
In addition, the simplicity of $\mO_A$ implies that $\rho_{m, n}$ is injective.

\begin{prop}\label{mainprop1}
Assume that {\tt (SH)}, {\tt (FS)}, {\tt (AF)} and {\tt (PCF)} hold for $A$.
Then one has 
\[
\ga_A|_{\left(\mO_A\right)^\Ga} \leq \sup_{n \ge \max \{n_0,n_1\}}\left(\limsup_{m\to \infty} \frac{1}{m}\log \overline{w(m, n)}\right).
\]
\end{prop}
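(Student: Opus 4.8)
The plan is to mimic the Boca--Goldstein computation \cite{BG2000} for finite Markov shifts, in which the level-$m$ core algebra $\mathrm{span}\{S_\mu S^*_\nu : |\mu|=|\nu|=m\}$ is finite dimensional; in the present non-locally-finite situation this core is infinite dimensional, so it will be replaced by a finite-dimensional substitute indexed by the finite set $\overline{L(m,n)}$, whose partition-of-unity property $\sum_{\mu\in\overline{L(m,n)}}S_\mu S^*_\mu = 1$ is precisely Proposition \ref{prop:1117}. As a preliminary reduction, note that by \eqref{eq:1221b} and Lemma \ref{finsum} (which expresses $S^*_iS_i$ as $1$ or a finite sum of range projections) each spanning element $S_\alpha\bigl(\prod_{i\in F}S^*_iS_i\bigr)S^*_\beta$ of $(\mO_A)^\Ga$ lies in $\mathrm{span}\,\omega_A$, so $(\mO_A)^\Ga=\overline{\mathrm{span}}\,\omega_A$. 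For $N\in\N$ I set $\omega_N\coloneqq\{S_\alpha S^*_\beta : |\alpha|=|\beta|\le N,\ \alpha_i\le N,\ \beta_i\le N\}$, a finite subset of $(\mO_A)^\Ga$ increasing in $N$. Since $\Ga_z\circ\ga_A=\ga_A\circ\Ga_z$ we have $\ga_A\bigl((\mO_A)^\Ga\bigr)\subset(\mO_A)^\Ga$, and since $\overline{\mathrm{span}}\bigcup_N\omega_N=(\mO_A)^\Ga$, the second bullet of Section \ref{subsec23} gives $ht\bigl(\ga_A|_{(\mO_A)^\Ga}\bigr)=\sup_N ht\bigl(\ga_A|_{(\mO_A)^\Ga},\omega_N\bigr)$, so it suffices to bound $ht\bigl(\ga_A|_{(\mO_A)^\Ga},\omega_N\bigr)$ for fixed $N$.

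Fix $n\ge\max\{n_0,n_1\}$ and $k\in\N$, and set $m\coloneqq N+k-1$. Since $\ga^j_A(S_\alpha S^*_\beta)=\sum_{\ga}S_{\ga\alpha}S^*_{\ga\beta}$ is a finite sum (by {\tt (UCF)}) over admissible prefixes $\ga$ of length $j$, every element of $\bigcup_{j=0}^{k-1}\ga^j_A(\omega_N)$ is a finite linear combination of terms $S_{\ga\alpha}S^*_{\ga\beta}$ with $|\ga\alpha|=|\ga\beta|=L\le m$. The core combinatorial fact I will establish is: when $L\le m$, for all $\mu,\nu\in\overline{L(m,n)}$ the element $S^*_\mu S_{\ga\alpha}S^*_{\ga\beta}S_\nu$ vanishes unless the generalized words $\mu,\nu$ both ``begin with'' the common prefix $\ga$ and, after $S^*_\mu$ absorbs $\ga\alpha$ and $S^*_{\ga\beta}$ absorbs (the front of) $\nu$, the remaining terminal generalized words in $\overline{L(m-L,n)}$ coincide; in that case the value is the range projection attached to that common terminal word. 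Cutting with $1=\sum_\mu S_\mu S^*_\mu$ on both sides and invoking Proposition \ref{prop:1117} at level $m-L$ then yields a decomposition $S_{\ga\alpha}S^*_{\ga\beta}=\sum_{(\mu,\nu)}S_\mu S^*_\nu$ over the finitely many compatible pairs, all of which share the same terminal piece. Hence $\bigcup_{j=0}^{k-1}\ga^j_A(\omega_N)$ lies in the linear span $B_{m,n}$ of the $S_\mu S^*_\nu$ with $\mu,\nu\in\overline{L(m,n)}$ sharing their terminal piece; the usual matrix-unit computation (the terminal projection acting as a left identity, exactly as in the finite case) shows $B_{m,n}$ is a finite-dimensional C$^*$-subalgebra of $\mO_A$, a direct sum of full matrix algebras, with $\mathrm{rank}\,B_{m,n}=|\overline{L(m,n)}|=\overline{w(m,n)}$.

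Finite-dimensional C$^*$-algebras are injective, so the identity of $B_{N+k-1,n}$ extends to a cp map $\phi\colon\mO_A\to B_{N+k-1,n}$; with $\psi\colon B_{N+k-1,n}\hookrightarrow\mO_A\subset B(l^2(\Sigma_A))$ the inclusion, $(\phi,\psi,B_{N+k-1,n})\in CPA(\mO_A)$ satisfies $\psi\circ\phi(S)=S$ for every $S\in\bigcup_{j=0}^{k-1}\ga^j_A(\omega_N)$, whence $rcp\bigl(\bigcup_{j=0}^{k-1}\ga^j_A(\omega_N);\delta\bigr)\le\overline{w(N+k-1,n)}$ for all $\delta>0$. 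Therefore
\[
ht\bigl(\ga_A|_{(\mO_A)^\Ga},\omega_N\bigr)\le\limsup_{k\to\infty}\frac1k\log\overline{w(N+k-1,n)}=\limsup_{m\to\infty}\frac1m\log\overline{w(m,n)},
\]
the last equality because $\overline{w(m,n)}=\sum_{p=0}^m|L_p(m,n)|P_A^{m-p}$ grows at most exponentially in $m$ under {\tt (UCF)}. Taking the supremum over $N$ and then over $n\ge\max\{n_0,n_1\}$ gives the stated inequality (in fact even with $\inf_n$ in place of $\sup_n$).

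The delicate step is the combinatorial fact in the middle paragraph: one must carry out, in the generalized-word setting, the bookkeeping for $S^*_\mu S_{\ga\alpha}$ and $S^*_{\ga\beta}S_\nu$, tracking how the pure-word prefix of $\mu=\mu'(m-p,n;l_{m-p})\cdots(1,n;l_1)$ interacts with the boundary factors $S_{(k,n;l_k)}$ as $S^*_\mu$ absorbs $\ga\alpha$ -- including the border case in which the pure part of $\mu$ is exactly $\ga\alpha$ -- and then verify that the resulting span $B_{m,n}$ is genuinely a C$^*$-algebra of rank $\overline{w(m,n)}$ rather than something larger. Once these structural facts are in place, the remaining entropy estimate is routine.
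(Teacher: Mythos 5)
Your architecture matches the paper's: the same reduction to finite generating subsets of $(\mO_A)^\Ga$, the same finite index set $\overline{L(m,n)}$ with the partition of unity of Proposition \ref{prop:1117}, and the same combinatorial core. Your packaging of the factorization is a legitimate variant: where the paper uses the $*$-homomorphism $\rho_{m,n}$ into $M_{\overline{L(m,n)}}(\C)\otimes(\mO_A)''$ together with an approximate factorization of the finite-dimensional commutative algebra $\mathcal F_n$ generated by $\{S^*_\kappa S_\kappa\}$, you factor directly through the finite-dimensional matrix-unit algebra $B_{m,n}$ and injectivity, which gives the cleaner exact bound $rcp\le\overline{w(m,n)}$ instead of $m_1\overline{w(m+m_0,n)}$ (asymptotically the same). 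But the proof is incomplete where it matters most: the ``core combinatorial fact'' you state in the middle paragraph and then defer is precisely the paper's Lemma \ref{lem:1113}, which occupies the bulk of the argument. Verifying that $S^*_\mu S_{\ga\alpha}S^*_{\ga\beta}S_\nu$ vanishes unless $\mu=\ga\alpha\mu''$, $\nu=\ga\beta\mu''$ with a common $\mu''$, and equals the terminal projection in that case, requires a genuine case analysis on the generalized letters: one must use Lemma \ref{seqofsets} (i.e.\ {\tt (AF)}) to show $S^*_{(m'-j+1,n;l)}S_{\eta_j}=0$ because $\eta_j\in N(l+\bar m-j+1,n)\subset N(m'-j+1,n)$, and use the disjointness of the blocks $H^{(l)}\setminus N(j,n)$ to get $S^*_{(j',n;l)}S_{(j',n;l')}=0$ for $l\ne l'$ (this last point is exactly what makes $B_{m,n}$ a direct sum of matrix algebras of rank $\overline{w(m,n)}$ rather than something larger). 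Stating that this ``bookkeeping'' must be done is not doing it.

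There is also a concrete error that shows the deferral is not harmless. Your test sets $\omega_N$ bound \emph{all} letters by $N$, and you then run the argument for an arbitrary $n\ge\max\{n_0,n_1\}$, claiming at the end that the bound even holds with $\inf_n$. This is false: the cutting identity $S_{\ga\alpha}S^*_{\ga\beta}=\sum_{\mu''}S_{\ga\alpha\mu''}S^*_{\ga\beta\mu''}$ over $\overline{L(m,n)}$ requires $t(\alpha),t(\beta)\le n$. For instance, in the renewal shift with $N>n$, the projection $S_NS^*_N$ does not lie in your $B_{1,n}$, since the only candidate block is $S_{(1,n;1)}S^*_{(1,n;1)}=\sum_{\alpha>n}S_\alpha S^*_\alpha\ne S_NS^*_N$; one only gets $S_NS^*_N=S_{(1,n;1)}\bigl(S^*_NS_N\bigr)S^*_{(1,n;1)}$, whose middle factor is not the common terminal projection. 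This is exactly why the paper defines $\omega_A(m_0,n)$ with $\alpha,\beta\in L(\bar m,n)$, i.e.\ with the \emph{terminal} letter bounded by the same $n$ used to build $\overline{L(m,n)}$. The fix is easy --- restrict to $n\ge\max\{n_0,n_1,N\}$, which costs nothing since $\limsup_m\frac1m\log\overline{w(m,n)}$ is nondecreasing in $n$ and a supremum over $n$ is taken --- but you must make it, and you must drop the $\inf_n$ claim. A last small point: $B_{m,n}$ is a subalgebra of $B(l^2(\Sigma_A))$ (or $(\mO_A)''$), not of $\mO_A$, since the $S_{(k,n;l_k)}$ are strong-operator sums; this does not break the $rcp$ estimate (injectivity of finite-dimensional C$^*$-algebras gives a ucp map $B(l^2(\Sigma_A))\to B_{m,n}$ which you restrict to $\mO_A$), but it should be said.
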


\begin{proof}
For each natural number $m_0, n$, we write 
\[
\omega_A(m_0, n) \coloneqq  \left\{ S_\alpha S_\beta ^* : \text{$\alpha , \beta \in L(\bar m, n)$ with $ \bar m\in\{1,\ldots , m_0\}$} \right\} \cup \{ 1\} \subset \omega _A
\]
(recall \eqref{eq:1117e} for $\omega _A$).
By \eqref{eq:1221b} and the property in Section \ref{subsec23}, it is sufficient to show that for all $\delta >0, m_0 \in \N$ and $n \geq \max\{n_0,n_1\}$ one has
\[
\limsup_{m \to \infty} \frac{1}{m} \log rcp\left(\bigcup_{k=0}^{m-1} \ga_A ^{k}\left(\omega_A(m_0, n)\right); \delta\right) \leq \limsup_{m\to \infty} \frac{1}{m}\log \overline{w(m, n)}.
\]
Let $\mathcal F_n$ be the C$^*$-subalgebra of $\left(\mO_A\right)^{''}$ generated by $\{ S^* _{\kappa} S_{\kappa} : \kappa \in  \overline{L(1, n)}\}$.
Since $\mathcal F_n$ is commutative, in particular exact, and $\{ S^* _{\kappa} S_{\kappa} : \kappa \in  \overline{L(1, n)}\}$ is a finite set, there are ccp maps 
\[
\phi_1 \colon \mathcal F_n \to M_{m_1}(\C), \quad \psi_1 \colon M_{m_1}(\C) \to B(l^2(\Sigma _A))
\]
for some $m_1\in\mathbb{N}$ with
\[
\| \psi_1\phi_1\left(S_\kappa ^* S_\kappa\right) - S_\kappa ^* S_\kappa \| \leq \delta \left( \overline{w(1, n)}\right)^{-1}
\]
for any $\kappa \in \overline{L(1, n)}$.
Hence, using Arveson's extension theorem,\footnote{Arveson's extension theorem states that for a C$^*$-subalgebra $\mathcal F$ of a C$^*$-algebra $\mathcal O$, any cp map $\phi $ from $\mathcal F$ to $B(\mathcal H)$ with a Hilbert space $\mathcal H$ can be extended to a cp map from $\mathcal O$ to $B(\mathcal H)$ (cf.~\cite{NS2006}).}
one can extend $\phi_1$ to a ccp map from $(\mathcal O_A)^{''}$ to $M_{m_1}(\C )$, which is also denoted by $\phi _1$. 

For each $m \in \N$, let 
\[
\Psi_{m, n}\colon M_{\overline{L(m, n)}}(\C ) \otimes B(l^2(\Sigma _A)) \to B(l^2(\Sigma _A))
\]
be a ccp extension of $\rho_{m, n} ^{-1}$.
Write $m'\coloneqq  m+m_0$ and set 
\[
\phi\coloneqq  (id_{M_{{\overline{L(m', n)}}}(\C )} \otimes \phi_1)\rho_{m', n} \ , \ \psi \coloneqq \Psi_{m', n}(id_{M_{{\overline{L(m', n)}}}(\C )} \otimes \psi_1).
\]
See the following diagram.
\vspace{0.1cm}
$$\xymatrix@M=4pt{
\mbox{\footnotesize $\mathcal O_A$} \ar@{->} [rr] ^(0.45){\rho_{m',n}}  
\ar@{->} [ddrrr] _(0.4){\phi} & & \ar @{} [d] |{} 
\mbox{\footnotesize $\rho_{m',n}(\mathcal O_A)$}
\ar@{->} [ddr] ^{id \otimes \phi_1} \ar@{->} [rr] ^{id_{\rho_{m',n}(\mathcal O_A)}}
 &  & 
\ar @{} [dr] < 8pt> |{} \mbox{\footnotesize $\rho_{m',n}(\mathcal O_A)$}
 \ar@{->} [rr] ^{\rho_{m',n}^{-1}} 
\ar@{^{(}->} [d] &  & \mbox{\footnotesize $B (l^2(\Sigma _A))$}  \\ 
&   &  &   & \mbox{\footnotesize $M_{\overline{L(m',n)}}  \otimes \mathcal O_A$}
\ar @{} [d]  |{}  \ar@{->} [urr] _{\Psi_{m',n}} &  & \\
         &     &   & 
\mbox{\footnotesize $M_{\overline{L(m',n)}} \otimes M_{m_1} $}  
\ar@{->} [ur] _(0.55){id \otimes \psi_1} 
\ar@/_3pc/ @{->} [uurrr] _(0.65){\psi} &   &   &  }$$

\vspace{0.1cm}

The argument in \cite[Lemma 2]{BG2000}, with a modification due to the introduction of $\overline{L(m,n)}$, shows the following.
\begin{lem}\label{lem:1113}
Assume that {\tt (AF)} holds.
Then, for any $f \in \omega_A(m_0, n), m\in \N$ and $l=0, \cdots, m-1$, there exists a set of partial isometries $\{ X_{\kappa} : {\kappa} \in \overline{L(1, n)} \} \subset M_{{\overline{L(m', n)}}}$ with
\[
\rho_{m+m_0, n}\ga_A ^l(f) = \sum_{{\kappa} \in \overline{L(1, n)}} X_{\kappa} \otimes S^* _{\kappa} S_{\kappa}.
\]
\end{lem}
\begin{proof}
Let $m'\coloneqq  m+m_0$. 
We first consider the case $f= S_\alpha S_\beta ^*$ with $\alpha = \{\alpha _i\}_{i=1}^{\bar m}, \beta =\{\beta _i\}_{i=1}^{\bar m} \in L(\bar m,n)$ for some $\bar m\in \{1,\ldots , m_0\}$.
Then, it follows from \eqref{eq:1117}  that
\begin{align*}
\rho_{m',n} \gamma_A^l (S_\alpha S_\beta^*) 
&=\sum_{\mu,\nu \in \overline{L(m',n)}}\sum_{\eta \in \Sigma _A^*,\; \vert \eta \vert =l} 
e_{\mu , \nu} \otimes S_\mu^* S_\eta S_\alpha  S_\beta^* S_\eta^* S_\nu \\
&=\sum_{\mu,\nu \in \overline{L(m',n)}}\sum_{\eta \in L_{l}(l+\bar m,n)} 
e_{\mu , \nu} \otimes S_\mu^* S_\eta S_\alpha  S_\beta^* S_\eta^* S_\nu .
\end{align*}
For $\mu =\{\mu _i\}_{i=1}^{m'}$ and $\nu =\{\nu _i\}_{i=1}^{m'}$, write 
\[
\mu '\coloneqq\{\mu _i\}_{i=l+1}^{m'}, \; \mu ''\coloneqq\{\mu _i\}_{i=l+\bar m+ 1}^{m'},  
\; \nu '\coloneqq\{\nu _i\}_{i=l+1}^{m'}, \; \nu ''\coloneqq\{\nu _i\}_{i=l+\bar m + 1}^{m'}.
\]
Then, $\vert \mu ''\vert =\vert \nu '' \vert = m' - l -m_0 \ge 1$ and $t(\mu'')=\mu_{m'}$.
For $\eta =\{\eta _i\}_{i=1}^{l}$, it follows from \eqref{EL2}, \eqref{EL3} and Lemma \ref{seqofsets} that
\begin{align*}
S_\mu^* S_\eta & =(S_{\mu _2}\cdots S_{\mu_{m'}})^* (S_{\mu_1}^* S_{\eta _1}S_{\eta _2}) (S_{\eta _3}\cdots S_{\eta _l}) \\
&=
\begin{cases}
(S_{\mu _2}\cdots S_{\mu_{m'}})^* (S_{\eta _2}\cdots S_{\eta _l}) \quad &(\text{if $\mu _1=\eta _1$})\\
0 \quad &(\text{otherwise})
\end{cases}.
\end{align*}
More precisely, if $\mu _1$ belongs to $\N$, then this is just a consequence of \eqref{EL2} and \eqref{EL3}.
Otherwise (so that $\mu _1\neq \eta _1$), by the construction of $\overline{L(m',n)}$,  $\mu _{1} =(m',n; l_{m'})$ with some $l_{m'}\in \mathcal P_A$. 
Thus, observing that $\eta _1\in N(l+\bar m,n) \subset N(m',n)$ by Lemma \ref{seqofsets} (recall that $\eta \in L_l(l+\bar m,n)$), we have
\[
S_{\mu_1}^* S_{\eta _1}=\sum _{\gamma \in H^{(l_{m'})} \setminus N(m',n)} S_{\gamma }^* S_{\eta _1} = 0
\]
by \eqref{EL2}.

Similarly, due to  \eqref{EL2}, \eqref{EL3} and  Lemma \ref{seqofsets}, we get
\[
S_\mu^* S_\eta S_\alpha  S_\beta^* S_\eta^* S_\nu = 
\begin{cases}
S_{ \mu'}^* (S_{\eta_l}^*S_{\eta _l})S_\alpha  S_\beta^* (S_{\eta _l}^* S_{\eta _l} ) S_{\nu '} \quad &(\text{if $\mu = \eta \mu '$ and $\nu = \eta \nu '$}) \\
0 \quad &(\text{otherwise})
\end{cases}.
\]
 Indeed, for the most involved part, observe that if   $\mu _j =(m' -j +1,n; l_{m'-j+1})$ for some $l_{m'-j+1}\in \mathcal P_A$ with $1\le j\le l$, then 
  \[
  S_{\mu_j}^* S_{\eta _j}=\sum _{\gamma \in H^{(l_{m'-j+1})} \setminus N(m'-j+1,n)} S_{\gamma }^* S_{\eta _j} = 0
\]
because $\eta _j\in N(l+\bar m -j +1,n) \subset N(m'-j+1,n)$.
 It also follows  from \eqref{EL3} that
\[
S_{ \mu'}^* (S_{\eta_l}^*S_{\eta _l})S_\alpha  S_\beta^* (S_{\eta _l}^* S_{\eta _l} ) S_{\nu '} 
=
\begin{cases}
S_{ \mu'}^* S_\alpha  S_\beta^* S_{\nu '} \quad &(\text{if  $\eta \alpha , \eta \beta \in \Sigma _A^*$})\\
0 \quad & (\text{otherwise})
\end{cases}.
\]

Furthermore, by using \eqref{EL2}, \eqref{EL3} and Lemma \ref{seqofsets} again, we get
\[
S_{ \mu'}^* S_\alpha  S_\beta^* S_{\nu '} = 
S_{ \mu''}^* (S_{\alpha _{\bar m}}^*S_{\alpha _{\bar m}} )( S_{\beta _{\bar m}}^*  S_{\beta _{\bar m}} )S_{\nu ''} = S_{\mu ''} ^*S_{\nu ''}  
\]
if $\mu ' = \alpha \mu ''$, $\nu '= \beta \nu ''$ and $\alpha \mu '' , \beta \nu '' \in \Sigma _A^*$, 
and $S_{ \mu'}^* S_\alpha  S_\beta^* S_{\nu '} = 0$ otherwise.
As before, observe that if   $\mu _{l+j} =(m' -l - j +1,n; l_{m' -l -j+1})$ for some $l_{m' -l -j+1}\in \mathcal P_A$ with $1\le j\le \bar m$, then 
  \[
  S_{\mu_{l+j}} ^*S_{\alpha _j}=\sum _{\gamma \in H^{(l_{m'-l-j+1})} \setminus N(m'-l-j+1,n)} S_{\gamma } ^*S_{\alpha _j} = 0
\]
because $\alpha _j\in N(\bar m -j +1,n) \subset N(m'-l-j+1,n)$.
Finally, note that for each $j \in\{l+\bar m+1,\ldots , m'\}$,  we have 
\[
S^*_{\mu _j} S_{\nu _j} = 
\begin{cases}
S^*_{\mu _j} S_{\mu _j} \quad & (\text{if $\mu _j= \nu _j$})\\
0 \quad & (\text{otherwise})
\end{cases}
\]
by \eqref{EL2} and the fact that $\N$ is decomposed into mutually disjoint subsets $N(j,n)$, $H^{(l_1)} \setminus N(j,n)$, $\ldots $, $H^{(l_{P_A})} \setminus N(j,n)$. 
In particular, observe that if $\mu _j=(m'-j+1,n;l_{m'-j+1})$ and $\nu _j=(m'-j+1,n;l_{m'-j+1}')$ for some $l_{m'-j+1}, l_{m'-j+1}' \in \mathcal P_A$ with $l_{m'-j+1} \neq l_{m'-j+1}'$, then
\[
S^*_{\mu _j} S_{\nu _j} = \sum _{\gamma \in H^{(l_{m'-j+1})} \setminus N(m'-j+1,n)} \, \sum _{\gamma ' \in H^{(l_{m'-j+1}')} \setminus N(m'-j+1,n)} S_{\gamma }^* S_{\gamma '} =0
\]
by \eqref{EL2}. 
Hence, we get
\[
S_{\mu ''} ^*S_{\nu ''} = 
\begin{cases}
S_{t(\mu '')}^*S_{t(\mu '')} \quad &(\text{if  $\mu ''=\nu ''$})\\
0 \quad &(\text{otherwise})
\end{cases}.
 \]

In conclusion, we get
\begin{align*}
\rho_{m',n} \gamma_A^l (S_\alpha S_\beta^*) 
= \sum_{\eta \in L_{l}(l+\bar m,n)} 
\sum_{\substack{\mu '' \in \overline{L(m'-l-\bar m,n)}\\ 
\eta \alpha \mu^{\prime \prime},\eta \beta \mu^{\prime \prime}
\in \Sigma _A^*}} 
e_{\eta \alpha \mu^{\prime \prime},\eta \beta \mu^{\prime \prime}}
\otimes S_{t(\mu '')}^*S_{t(\mu'')} =\sum_{\kappa \in \overline{L(1, n)}} X_{\kappa} \otimes S^* _{\kappa} S_{\kappa}
\end{align*}
with
\[
X_{\kappa} =\sum_{\eta \in L_{l}(l+\bar m,n)} 
\sum_{\substack{\mu '' \in \overline{L(m'-l-\bar m,n)}\\ 
\eta \alpha \mu^{\prime \prime},\eta \beta \mu^{\prime \prime}
\in \Sigma _A^*, \; t(\mu '') =\kappa }} e_{\eta \alpha \mu^{\prime \prime},\eta \beta \mu^{\prime \prime}} \in M_{\overline{L(m',n)}}(\C ).
\]
Each $X_{\kappa} $ is a partial isometry because for each $\gamma , \gamma ' \in \overline{L(m',n)}$,  the $\gamma$-row of the matrix $X_{\kappa}$ is a non-zero vector only if $\gamma =\eta \alpha \mu^{\prime \prime}$ with some $\eta \in L_{l}(l+\bar m,n)$, $\mu '' \in \overline{L(m'-l-\bar m,n)}$ satisfying $\eta \alpha \mu^{\prime \prime},\eta \beta \mu^{\prime \prime} \in \Sigma _A^*$, $t(\mu '') =\kappa $, 
and in the non-zero vector case, all the $\gamma '$-entry of the $\gamma$-row is $0$ except the $\eta \beta \mu^{\prime \prime}$-entry with value $1$.

For the case $f=1$,  a similar computation yields
 \begin{align*}
\rho_{m',n} \gamma_A^l (1) 
=\sum_{\kappa \in \overline{L(1, n)}} X_{\kappa} \otimes S^* _{\kappa} S_{\kappa}
\end{align*}
with
\[
X_{\kappa} =\sum_{\eta \in L_{l}(l+\bar m,n)} 
\sum_{\substack{\mu ' \in \overline{L(m'-l,n)}\\ 
\eta  \mu^{\prime }
\in \Sigma _A^*, \; t(\mu ') =\kappa }} e_{\eta  \mu^{\prime},\eta  \mu^{\prime}} \in M_{\overline{L(m',n)}}(\C ).
\]
This completes the proof.
\end{proof}
By Lemma \ref{lem:1113}, for any $f \in \omega_A(m_0, n), m\in \N$ and $l=0, \cdots, m-1$ one has
\[
\left\| (id_{M_{{\overline{L(m', n)}}}} \otimes \psi_1 \phi_1) \left(\rho_{m', n}\ga_A ^l(f)\right) - \rho_{m', n}\ga_A ^l(f) \right\| \leq \delta.
\]
Thus, we obtain
\begin{equation*}
\begin{split}
\left\| \psi \phi \left(\ga_A ^l(f) \right) - \ga_A ^l(f) \right\| 
& = \left\| \Psi_{m', n}(id_{M_{{\overline{L(m', n)}}}} \otimes \psi_1 \phi_1) \left(\rho_{m', n}\ga_A ^l(f)\right) - \Psi_{m', n}\rho_{m', n}\ga_A ^l(f) \right\| \\
&\leq \left\| (id_{M_{{\overline{L(m', n)}}}} \otimes \psi_1 \phi_1) \left(\rho_{m', n}\ga_A ^l(f)\right) - \rho_{m', n}\ga_A ^l(f) \right\| \leq \delta.
\end{split} 
\end{equation*}
This inequality implies that we have
\[
\frac{1}{m} \log rcp\left(\bigcup_{k=0}^{m-1} \ga_A ^{k}\left(\omega_A(m_0, n)\right); \delta\right) \leq \frac{1}{m}\log \left(m_1\overline{w(m+m_0, n)}\right).
\]
for any $m \in \N$.
Thus, we get
\[
\limsup_{m \to \infty} \frac{1}{m} \log rcp\left(\bigcup_{k=0}^{m-1} \ga_A ^{k}\left(\omega_A(m_0, n)\right); \delta\right) \leq \limsup_{m\to \infty} \frac{1}{m}\log \overline{w(m, n)}.
\]
This completes the proof of Proposition \ref{mainprop1}.
\end{proof}

\begin{thm}\label{mainthm1}
Assume that {\tt (SH)}, {\tt (FS)}, {\tt (AF)} and {\tt (PCF)} hold for $A$.
Then one has
\[
ht(\ga_A|_{\left(\mO_A\right)^\Ga} )\leq   h_G(\sigma _A)   + \log P_A  +\mathcal I
\]
and
\[
ht(\ga_A|_{\left(\mO_A\right)^\Ga} )\leq \max\{ h_G(\sigma _A),   \log P_A\} + \mathcal I+\mathcal D.
\]
\end{thm}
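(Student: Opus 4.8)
The plan is to combine Proposition~\ref{mainprop1} with sharp bounds on the combinatorial quantity $\overline{w(m,n)}$. Splitting $\overline{L(m,n)}$ according to the length $p$ of the genuine prefix $\alpha\in L_p(m,n)$ preceding the block of formal symbols $(k,n;l_k)$ gives the identity
\[
\overline{w(m,n)}=w(m,n)+P_A^{\,m}+\sum_{p=1}^{m-1}\lvert L_p(m,n)\rvert\,P_A^{\,m-p},
\]
so by Proposition~\ref{mainprop1} it suffices to bound $\sup_{n\ge\max\{n_0,n_1\}}\limsup_{m\to\infty}\tfrac1m\log\overline{w(m,n)}$. I would first record three facts. (a) The truncation map $L(m,n)\to L_p(m,n)$ is onto, hence $\lvert L_p(m,n)\rvert\le w(m,n)$ for every $p$. (b) By the definition of $D(m,n)$, one has $\lvert L_p(m,n)\rvert\le D(m,n)\,w(m,n)/w(m-p,n)$ for $1\le p\le m-1$. (c) $\overline{w(m,n)}$ is non-decreasing in $n$, so $\limsup_m\tfrac1m\log\overline{w(m,n)}$ is non-decreasing in $n$ and its supremum over $n\ge\max\{n_0,n_1\}$ equals its limit as $n\to\infty$; this is what will let me feed in Proposition~\ref{lem:la1109}, which holds only for large $n$ (and applies since {\tt (PCF)} implies {\tt (UCF)}).

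For the first inequality I would use (a) together with $P_A^{\,m-p}\le P_A^{\,m}$ to get $\overline{w(m,n)}\le(m+1)P_A^{\,m}w(m,n)$, so that $\limsup_m\tfrac1m\log\overline{w(m,n)}\le\log P_A+\limsup_m\tfrac1m\log w(m,n)\le\log P_A+h_G(\sigma_A)+\mathcal I$ for all large $n$ by Proposition~\ref{lem:la1109}. By (c) and Proposition~\ref{mainprop1}, this yields $ht(\ga_A|_{(\mO_A)^\Ga})\le h_G(\sigma_A)+\log P_A+\mathcal I$.

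For the sharper inequality, I would fix $\varepsilon>0$ and choose $n$ large enough that Proposition~\ref{lem:la1109} applies (so $h_G(\sigma_A)\le\liminf_m\tfrac1m\log w(m,n)$ and $\limsup_m\tfrac1m\log w(m,n)\le h_G(\sigma_A)+\mathcal I$) and that $\limsup_m\tfrac{\log D(m,n)}{m}\le\mathcal D+\varepsilon$ (possible because $\mathcal D$ is the $\limsup$ over $n$ of this quantity). Using (b) and reindexing $q=m-p$,
\[
\overline{w(m,n)}\le w(m,n)+P_A^{\,m}+D(m,n)\,w(m,n)\sum_{q=1}^{m-1}\frac{P_A^{\,q}}{w(q,n)}.
\]
Since $w(q,n)\ge e^{q(h_G(\sigma_A)-\varepsilon)}$ for $q$ beyond some $q_0=q_0(n,\varepsilon)$, splitting this sum at $q_0$ bounds it by $C(n,\varepsilon)\,m\,e^{m\theta_+}$, where $\theta\coloneqq\log P_A-h_G(\sigma_A)+\varepsilon$ and $\theta_+\coloneqq\max\{\theta,0\}$ (the tail $\sum_{q\ge q_0}e^{q\theta}$ is at most $m$ when $\theta\le0$ and at most $e^{m\theta}/(e^{\theta}-1)$ when $\theta>0$). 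Taking $\tfrac1m\log$ and $\limsup_{m\to\infty}$, the three summands have exponential rates at most $h_G(\sigma_A)+\mathcal I$, $\log P_A$, and $(\mathcal D+\varepsilon)+(h_G(\sigma_A)+\mathcal I)+\theta_+$, respectively. A short case analysis — $\theta_+\le\varepsilon$ when $\log P_A\le h_G(\sigma_A)$, and $\theta_+=\log P_A-h_G(\sigma_A)+\varepsilon$ otherwise — shows that the third rate dominates the other two and is at most $\max\{h_G(\sigma_A),\log P_A\}+\mathcal I+\mathcal D+2\varepsilon$. Hence $\limsup_m\tfrac1m\log\overline{w(m,n)}\le\max\{h_G(\sigma_A),\log P_A\}+\mathcal I+\mathcal D+2\varepsilon$ for all sufficiently large $n$; applying (c), Proposition~\ref{mainprop1}, and finally $\varepsilon\to0$ gives $ht(\ga_A|_{(\mO_A)^\Ga})\le\max\{h_G(\sigma_A),\log P_A\}+\mathcal I+\mathcal D$.

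The main obstacle is the bookkeeping in the last step: one has to control the series $\sum_{q}P_A^{\,q}/w(q,n)$, whose growth rate switches at $\log P_A=h_G(\sigma_A)$, carry the auxiliary parameter $\varepsilon$ through the two resulting cases cleanly, and — because Proposition~\ref{lem:la1109} only supplies the needed two-sided growth estimate on $w(m,n)$ for large $n$ whereas Proposition~\ref{mainprop1} takes a supremum over all $n\ge\max\{n_0,n_1\}$ — genuinely invoke the monotonicity of $\overline{w(m,n)}$ in $n$ (fact (c)) to reduce to that regime. The remaining ingredients, the two elementary inequalities (a) and (b) on $\lvert L_p(m,n)\rvert$ and the manipulation of limits superior, are routine.
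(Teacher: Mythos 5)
Your proposal is correct and follows essentially the same route as the paper: it rests on Proposition \ref{mainprop1}, the decomposition $\overline{w(m,n)}=\sum_{p}\lvert L_{p}(m,n)\rvert P_A^{\,m-p}$ (the paper's \eqref{eq:1211dd} with the index reversed), the crude bound $\lvert L_p(m,n)\rvert\le w(m,n)$ for the first inequality, and the distortion bound $\lvert L_p(m,n)\rvert\le D(m,n)\,w(m,n)/w(m-p,n)$ combined with the two-sided growth estimate of Proposition \ref{lem:la1109} for the second. Your explicit handling of the monotonicity in $n$ and of the geometric series' phase transition at $\log P_A=h_G(\sigma_A)$ only makes more careful what the paper compresses into its constant $C_\epsilon$.
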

\begin{proof}
By Proposition \ref{mainprop1}, since the sequence $\displaystyle \left\{ \limsup_{m\to \infty} \frac{1}{m}\log \overline{w(m, n)} \right\}_{n \in \N}$ is increasing, it is sufficient to show that 
\begin{equation}\label{eq:1211b}
\limsup_{m\to \infty} \frac{1}{m}\log \overline{w(m, n)} \leq  h_G(\sigma _A)   + \log P_A  +\mathcal I
\end{equation}
and
\begin{equation}\label{eq:1211c}
\limsup_{m\to \infty} \frac{1}{m}\log \overline{w(m, n)} \leq \max\{ h_G(\sigma _A),   \log P_A\} +\mathcal I +\mathcal D
\end{equation}
for any large $n \in \N$.
By the construction of $\overline{L(m,n)}$, one has
\begin{align}\label{eq:1211dd}
\overline{w(m, n)} = \sum_{p=0}^m  \vert L_{m-p}(m,n) \vert P_A ^{p}
\end{align}
with $\vert L_0(m,n) \vert \coloneqq 1$.
Since $\vert L_{m-p}(m,n)\vert \le \vert L(m,n)\vert =w(m,n)$, it follows from Proposition \ref{lem:la1109} that
\[
 \frac{1}{m}\log \overline{w(m, n)} \le
  \frac{1}{m}\log w(m,n) +   \frac{1}{m}\log \frac{P_A^{m+1}-1}{P_A-1} \to h_G(\sigma _A) +\mathcal I + \log P_A
\]
as $m\to \infty$, so \eqref{eq:1211b} holds.

Next we prove \eqref{eq:1211c}.
By   the construction of $D(m,n)$, we have
\begin{align*}
 \vert L_{m-p}(m,n) \vert P_A ^{p} \le  D(m,n) \cdot  \frac{w(m,n)}{w(p,n)} \cdot P_A ^{p} 
\end{align*}
for each $p\in \{1, \ldots ,m-1\}$.
On the other hand, it follows from Proposition \ref{lem:la1109} that for any $\epsilon >0$, there is an integer $p_0$ such that 
 \[
 e^{p(h_G(\sigma _A) -\epsilon )}\le w(p,n) \le e^{p(h_G(\sigma _A) + \mathcal I +\epsilon )}
 \]
 for all $p \ge p_0$ and large $n$.
 On the other hand, if $p< p_0$ and $m$, $n$ are large, then
 \[
 \vert L_{m-p}(m,n) \vert P_A ^{p}\le \vert L(m,n)\vert P_A^{p_0}  \le e^{m(h_G(\sigma _A) + \mathcal I +\epsilon )} P_A^{p_0} 
 \]
 by  Proposition \ref{lem:la1109}.
 Thus,   we have
 \begin{align*}
 \overline{w(m, n)} & \le p_0 P_A^{p_0} e^{m(h_G(\sigma _A) + \mathcal I +\epsilon )} 
 +D(m,n)  \sum _{p=p_0}^m e^{m(h_G(\sigma _A) + \mathcal I +\epsilon ) - p(h_G(\sigma _A) -\epsilon )}P_A^p \\
& \le 
D(m,n) C_{\epsilon} \left(  e^{m(h_G(\sigma _A) + \mathcal I +\epsilon )} + e^{m (\mathcal I +2\epsilon )} P_A^m  + 1 \right)
 \end{align*}
 with some positive constant $C_{\epsilon}$  being independent of $m$ (notice that $D(m,n) \ge 1$ by construction).
 Therefore, since
 \[
\limsup _{m\to\infty} \frac{1}{m}\log \left(a _m +b_m\right) = \max\left\{ \limsup _{m\to\infty} \frac{1}{m}\log a _m , \limsup _{m\to\infty} \frac{1}{m}\log b_m\right\} 
 \]
 for any sequences $\{ a_m\} _{m\in \N}$, $\{ b_m\} _{m\in \N}$ of positive numbers, we get
\begin{equation*}
\limsup_{m\to \infty} \frac{1}{m}\log \overline{w(m, n)}  
\le \max\left\{h_G(\sigma _A) +\mathcal I +\epsilon ,
  \log P_A + \mathcal I +2\epsilon \right\} +\mathcal D.
\end{equation*}
Since $\epsilon$ is arbitrary, we complete the proof of \eqref{eq:1211c}.
\end{proof}

\subsection{Non-commutative topological entropy for $\mO_A$}\label{ss:1oa}
In this section, we additionally assume the existence of a KMS state for the gauge action $\Ga$ and compute the non-commutative entropy $ht(\ga_A)$. 
Let $\varphi$ be a KMS state for $\Ga$ with some inverse temperature satisfying that
\[
\sum_{i=1}^\infty \varphi\left(S_i S_i ^*\right) =1.
\]
We denote  the GNS representation of $\varphi$ by $(\pi_\varphi, \mathcal{H}_\varphi)$. 
We identify $\pi_\varphi(\mO_A)$ with $\mO_A$.
Let $\left(\mO_A\right)^{''}$ be the double commutant of $\mO _A$ in $B(\mH_\varphi)$. 
Then the canonical gauge action $\Ga$ extends to an action of $\T$ on $\left(\mO_A\right)^{''}$. 
For each $p \in \Z$, we define a linear map
\[
E_p\colon \left(\mO_A\right)^{''} \to \left(\mO_A\right)^{''}
\]
by
\[
E_p(f)\coloneqq \int_\T z^{-p}\Ga_z(f)\ dz, \quad f \in \left(\mO_A\right)^{''},
\]
where $dz$ is the Haar measure on $\mathbb T$.
Let $\overline{\left(\mO_A\right)^{\Ga}}$ be the closure of $\left(\mO_A\right)^{\Ga}$ with respect to the strong operator topology.
Note that $E_0$ is a conditional expectation onto $\overline{\left(\mO_A\right)^{\Ga}}$ (and thus it is a ccp map).

For $m, n, m_0 \in \N$, we set
\[
\Omega_A(m, n) \coloneqq  \left\{ S_\alpha S_\beta ^* \in \Omega_A\setminus \{1\} : |\alpha| \leq m, |\beta| \leq m, t(\alpha) \leq n, t(\beta) \leq n \right\} \cup \{ 1\},
\]
\[
\mathcal{L}(m, n, m_0)\coloneqq \bigcup_{k=m}^{m+m_0} \overline{L(k, n)} \quad \text{and} \quad
\mathcal{W}(m, n, m_0)\coloneqq \left| \mathcal{L}(m, n, m_0) \right|.
\]
Notice that for each $f \in \Omega_A(m,n)$ with $m,n\in \N$, there is $p\in \Z$ with $\vert p\vert \le m$ such that $\Gamma _z(f) =z^p f$ for each $z\in \mathbb T$ because $\Gamma _z(S_\alpha S_\beta ^*) = z^{\vert \alpha \vert - \vert \beta \vert } S_\alpha S_\beta ^*$ for $\alpha ,\beta \in \Sigma _A^*$, and that
\begin{equation}\label{eq:1221c}
E_q(f) =
\begin{cases}
f \quad &(q=p)\\
0 \quad &(\text{otherwise})
\end{cases}
\end{equation}
when $\Gamma _z(f) =z^p f$.

\begin{remark}\label{rem:1228b}
Let $\varphi$ be a KMS state for $\Ga$ with some inverse temperature such that
\[
\sum_{i=1}^\infty \varphi\left(S_i S_i ^*\right) =1.
\]
Then one has
\[
\sum_{j=1}^\infty S_j S_j ^* =1
\]
with respect to the strong operator topology on $\mathcal{B}(\mathcal{H}_\varphi)$.
Indeed, for any $i \in \N$, a computation shows
\begin{equation}\label{RemEq}
\left(1- \sum_{j=1}^\infty S_j S_j ^*\right)S_i =0 
\end{equation}
and
\begin{equation}\label{RemEqStar}
\begin{split}
\lim_{n \to \infty} \varphi\left(S_i \left(1- \sum_{j=1}^n S_j S_j ^*\right)^2 S_i ^*\right)
&=\lim_{n \to \infty} e^{-\beta}\varphi\left(\left(1- \sum_{j=1}^n S_j S_j ^*\right)^2 S_i ^*S_i \right) \\
&=\lim_{n \to \infty} e^{-\beta}\varphi\left(\left(1- \sum_{j=1}^n S_j S_j ^*\right) S_i ^*S_i \left(1- \sum_{j=1}^n S_j S_j ^*\right)\right) \\
&\leq \lim_{n \to \infty} e^{-\beta}\| S_i \|^2 \varphi\left(1- \sum_{j=1}^n S_j S_j ^*\right) =0.
\end{split}
\end{equation}
Since $\mathcal{O}_A$ is generated by $\{ S_i \}_{i \in \N}$, equations (\ref{RemEq}) and (\ref{RemEqStar}) imply the required convergence. 
\end{remark}
Using the strong operator topology on $B(\mathcal{H}_\varphi)$, we define 
\[
S_{(k, n; l_k)}  \coloneqq  \sum_{\alpha \in H^{(l_k)}\setminus N(k, n)} S_\alpha.
\]
for $k\in \N$, $n \in \mathbb N$ and $l_k \in \mathcal P_A$.
From now on we do not consider $l^2(\Sigma_A)$.
Thus, the symbol $S_{(k, n; l_k)}$ does not cause confusion.
Due to Remark \ref{rem:1228b}, one can show Proposition \ref{prop:1117} for these new $S_{(k, n; l_k)}$'s.

We consider the following ccp maps motivated by \cite{Matsumoto2002,PWY2000}.
The first map 
\[
\Phi_{m, n, m_0} \colon \mO_A \to M_{\mathcal{L}(m, n, m_0)} \otimes \overline{\left(\mO_A\right)^{\Ga}}
\]  
is given by
\[
\Phi_{m, n, m_0}(f) \coloneqq  \sum_{\mu, \nu \in \mathcal{L}(m, n, m_0)} e_{\mu, \nu} \otimes S^*_\mu E_{|\mu| -|\nu|}(f)S_\nu, \quad f \in \mO_A.
\]
\begin{lem}
Assume that {\tt (FS)} {\tt (AF)} and {\tt (PCF)} hold for $A$.
Assume also that there exists a KMS state $\varphi$ for the canonical gauge action $\Ga$ with
\[
\sum_{i=1}^\infty \varphi\left(S_i S_i ^*\right) = 1.
\]
Let $m, n, m_0 \in \N$. 
Consider 
\[
V\coloneqq  \mathcal{W}(m, n, m_0)^{-\frac{1}{2}} \sum_{\mu, \nu \in \mathcal{L}(m, n, m_0)} e_{\mu, \nu} \otimes S _\nu \in M_{\mathcal{L}(m, n, m_0)} \otimes \left(\mO_A\right)^{''}.
\]
Then one has
\[
\Phi_{m, n, m_0}(f) = (id_{M_{\mathcal{L}(m, n, m_0)}} \otimes E_0)(V^* (1 \otimes f)V)
\]
for any $f \in \mO_A$.
In particular,
$\Phi_{m, n, m_0}$ is a ccp map.
\end{lem}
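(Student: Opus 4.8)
The plan is to compute $V^{*}(1\otimes f)V$ directly, apply $\mathrm{id}_{M_{\mathcal{L}}}\otimes E_{0}$ entrywise, recognise the outcome as $\Phi_{m,n,m_{0}}(f)$, and then deduce complete positivity from the factorisation and contractivity by evaluating at the unit. Throughout I abbreviate $W\coloneqq\mathcal{W}(m,n,m_{0})$ and $\mathcal{L}\coloneqq\mathcal{L}(m,n,m_{0})$, and for $\mu\in\overline{L(k,n)}$ I write $|\mu|\coloneqq k$, so that $\mathcal{L}$ is the disjoint union of the sets $\overline{L(k,n)}$ with $m\le k\le m+m_{0}$.

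\emph{Step 1.} From $V=W^{-1/2}\sum_{\mu,\nu\in\mathcal{L}}e_{\mu,\nu}\otimes S_{\nu}$ one gets $V^{*}=W^{-1/2}\sum_{\alpha,\beta\in\mathcal{L}}e_{\alpha,\beta}\otimes S_{\alpha}^{*}$, so multiplying out and using $e_{\alpha,\beta}e_{\mu,\nu}=\delta_{\beta,\mu}\,e_{\alpha,\nu}$ the constrained middle index $\beta=\mu$ runs freely over $\mathcal{L}$, contributing a factor $W$ which cancels $W^{-1/2}\cdot W^{-1/2}$:
\[
V^{*}(1\otimes f)V=\sum_{\mu,\nu\in\mathcal{L}}e_{\mu,\nu}\otimes S_{\mu}^{*}fS_{\nu}.
\]
This is the sole role of the normalisation. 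It is worth noting that $V$ is \emph{not} itself a contraction: the parallel computation gives $VV^{*}=\tfrac{m_{0}+1}{W}\bigl(\sum_{\mu,\mu'\in\mathcal{L}}e_{\mu,\mu'}\bigr)\otimes 1$, of norm $m_{0}+1$, since $\sum_{\nu\in\mathcal{L}}S_{\nu}S_{\nu}^{*}=(m_{0}+1)\cdot 1$ by Proposition~\ref{prop:1117}. Hence contractivity of $\Phi_{m,n,m_{0}}$ will not come from $\|V\|\le 1$, and the conditional expectation will be essential.

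\emph{Step 2.} It remains to prove, for $\mu,\nu\in\mathcal{L}$, that $E_{0}(S_{\mu}^{*}fS_{\nu})=S_{\mu}^{*}E_{|\mu|-|\nu|}(f)S_{\nu}$. The point is $\Ga_{z}(S_{\mu})=z^{|\mu|}S_{\mu}$: for letters in $\N$ this is immediate from $\Ga_{z}(S_{j})=zS_{j}$, and for a tail letter $S_{(k,n;l_{k})}=\sum_{\alpha\in H^{(l_{k})}\setminus N(k,n)}S_{\alpha}$ (a strong-operator convergent sum) it follows because $\Ga$, being implemented by a unitary representation of $\T$ on $\mH_{\varphi}$ --- here one uses that the KMS state $\varphi$ is $\Ga$-invariant --- extends to a point-strong continuous action of $\T$ on $(\mO_{A})''$; taking products gives $\Ga_{z}(S_{\mu})=z^{|\mu|}S_{\mu}$ for every $\mu\in\mathcal{L}$. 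Therefore $\Ga_{z}(S_{\mu}^{*}fS_{\nu})=z^{|\nu|-|\mu|}S_{\mu}^{*}\Ga_{z}(f)S_{\nu}$, and integrating over $\T$ while pulling the fixed operators $S_{\mu}^{*}$ and $S_{\nu}$ through the (weakly convergent) integral $\int_{\T}\cdots\,dz$ yields the claim. Combining with Step~1,
\[
(\mathrm{id}_{M_{\mathcal{L}}}\otimes E_{0})\bigl(V^{*}(1\otimes f)V\bigr)=\sum_{\mu,\nu\in\mathcal{L}}e_{\mu,\nu}\otimes S_{\mu}^{*}E_{|\mu|-|\nu|}(f)S_{\nu}=\Phi_{m,n,m_{0}}(f),
\]
and since each entry equals $E_{0}(S_{\mu}^{*}fS_{\nu})\in\overline{(\mO_{A})^{\Ga}}$, the map $\Phi_{m,n,m_{0}}$ indeed takes values in $M_{\mathcal{L}}\otimes\overline{(\mO_{A})^{\Ga}}$, so the displayed formula also pins down its codomain.

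\emph{Step 3 (ccp and the main obstacle).} Complete positivity is now immediate: $f\mapsto 1\otimes f$ is a unital $*$-homomorphism, $g\mapsto V^{*}gV$ is completely positive, and $\mathrm{id}_{M_{\mathcal{L}}}\otimes E_{0}$ is completely positive because $E_{0}$ is a conditional expectation. For contractivity, evaluate at $1$: since $E_{p}(1)=\delta_{p,0}\cdot 1$ the cross-length terms drop out and $\Phi_{m,n,m_{0}}(1)=\bigoplus_{k=m}^{m+m_{0}}R_{k}^{*}R_{k}$, where $R_{k}$ is the row operator $(S_{\nu})_{\nu\in\overline{L(k,n)}}$; by Proposition~\ref{prop:1117} (valid for the present $S_{(k,n;l_{k})}$ thanks to Remark~\ref{rem:1228b}) one has $R_{k}R_{k}^{*}=\sum_{\nu\in\overline{L(k,n)}}S_{\nu}S_{\nu}^{*}=1$, so each $R_{k}^{*}R_{k}$, hence $\Phi_{m,n,m_{0}}(1)$, is a projection. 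As a completely positive map on the unital C$^{*}$-algebra $\mO_{A}$ has norm $\|\Phi_{m,n,m_{0}}(1)\|\le 1$, it follows that $\Phi_{m,n,m_{0}}$ is ccp. I expect the only real work to be Step~2: getting the gauge-equivariance of the tail partial isometries $S_{(k,n;l_{k})}$ right (which forces one to invoke normality/strong continuity of the extended $\Ga$ on $(\mO_{A})''$) and justifying the interchange of $\int_{\T}\cdots\,dz$ with multiplication by $S_{\mu}^{*}$ and $S_{\nu}$; the accompanying subtlety, flagged in Step~1, is that contractivity genuinely relies on $E_{0}$ killing the cross-length blocks rather than on any bound on $V$ itself.
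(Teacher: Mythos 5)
Your proof is correct and follows the same overall strategy as the paper's: expand $V^*(1\otimes f)V$ into $\sum_{\mu,\nu\in\mathcal L(m,n,m_0)}e_{\mu,\nu}\otimes S^*_\mu fS_\nu$ (the normalization $\mathcal W(m,n,m_0)^{-1/2}$ being absorbed exactly as you say by the free middle index) and then match entries after applying $id\otimes E_0$. Where you genuinely diverge is in the entrywise identity $E_0(S^*_\mu fS_\nu)=S^*_\mu E_{|\mu|-|\nu|}(f)S_\nu$: the paper verifies it on the spanning set $\Omega_A\cup\{1\}$, where both sides reduce to the same Kronecker factor $\delta_{|\mu|-|\nu|,|\alpha|-|\beta|}$, and then extends by linearity and norm continuity; you instead prove it for arbitrary $f$ in one stroke from the equivariance $\Ga_z(S_\mu)=z^{|\mu|}S_\mu$ and the interchange of $\int_{\T}\cdots\,dz$ with multiplication by the fixed operators $S^*_\mu$, $S_\nu$. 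Your route obliges you to check that $\Ga$ extends normally to $\left(\mO_A\right)''$ so that the tail letters $S_{(k,n;l_k)}$ (strong-operator sums not lying in $\mO_A$) are genuinely gauge-equivariant of degree one --- the paper simply asserts this extension when introducing $E_p$ --- but in exchange it dispenses with the density argument and works verbatim for $f\in\left(\mO_A\right)''$. You also supply a point the paper leaves entirely implicit: since $\|V\|^2=m_0+1>1$, contractivity cannot be read off from the factorization, and your observation that $\Phi_{m,n,m_0}(1)$ is the direct sum over $k$ of the projections $R_k^*R_k$ (using $R_kR_k^*=1$ from Proposition \ref{prop:1117} together with Remark \ref{rem:1228b}) is the right way to close that gap; the only caveat is that this step, like the paper's eventual use of the lemma in Proposition \ref{prop:5.6}, tacitly requires $n\ge\max\{n_0,n_1\}$ so that Proposition \ref{prop:1117} applies, even though the lemma is stated for arbitrary $n\in\N$.
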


\begin{proof}
Fix any $S_\alpha S^* _\beta \in \Omega_A\setminus \{1\}$.
We have
\[
S^* _\mu E_{|\mu| - |\nu|}(S_\alpha S^* _\beta)S_\nu = \delta_{|\mu|-|\nu|, |\alpha|-|\beta|}S^* _\mu S_\alpha S^* _\beta S_\nu = E_0 \left(S^* _\mu S_\alpha S^* _\beta S_\nu \right)
\]
and
\[
S^* _\mu E_{|\mu| - |\nu|}(1)S_\nu = \delta_{|\mu|, |\nu|}S^* _\mu S_\nu = E_0 \left(S^* _\mu 1 S_\nu \right)
\]
for any $\mu, \nu, \in \mathcal{L}(m, n, m_0)$.
Since $\mO_A$ is generated by $\Omega_A \cup \{ 1 \}$, we obtain 
\[
S^* _\mu E_{|\mu| - |\nu|}(f)S_\nu = E_0 \left(S^* _\mu f S_\nu \right)
\]
for any $f \in \mO_A$.
This shows
\begin{equation*}
\begin{split}
\Phi_{m, n, m_0}(f) 
&= \sum_{\mu, \nu \in \mathcal{L}(m, n, m_0)} e_{\mu, \nu} \otimes S^* _\mu E_{|\mu| - |\nu|}(f)S_\nu \\
&= \sum_{\mu, \nu \in \mathcal{L}(m, n, m_0)} e_{\mu, \nu} \otimes E_0 \left(S^* _\mu f S_\nu \right) \\
&= (id_{M_{\mathcal{L}(m, n, m_0)}} \otimes E_0) \left( \sum_{\mu, \nu \in \mathcal{L}(m, n, m_0)} e_{\mu, \nu} \otimes S^* _\mu f S_\nu \right)
\end{split}
\end{equation*}
On the other hand, one computes
\begin{equation*}
\begin{split}
V^* (1 \otimes f)V &= \mathcal{W}(m, n, m_0)^{-1}\left( \sum_{\mu, \nu \in \mathcal{L}(m, n, m_0)} e_{\mu, \nu} \otimes S^* _{\mu} \right) (1 \otimes f) 
\left(\sum_{\mu', \nu' \in \mathcal{L}(m, n, m_0)} e_{\mu', \nu'} \otimes S _{\nu'} \right) \\
&=\mathcal{W}(m, n, m_0)^{-1} \left( \sum_{\mu, \nu '\in \mathcal{L}(m, n, m_0)} e_{\mu, \nu'} \otimes \mathcal{W}(m, n, m_0) S^*  _{\mu} f S_{\nu'}\right) \\
&= \sum_{\mu, \nu \in \mathcal{L}(m, n, m_0)} e_{\mu, \nu} \otimes S^* _\mu f S_\nu .
\end{split}
\end{equation*}
Thus, we obtain the conclusion.
\end{proof}

For each $\delta >0$ and $m, m_0 \in \N$, we choose $\xi  \equiv \xi _{m,m_0,\delta }\in l^2(\Z)$ with $\|\xi\|_2 \leq 1$ and 
\[
\lv  \sum_{\substack{s, r\in\{m,\ldots , m+m_0\} \\  s- r =p}} \xi (s)\overline{\xi (r)} -1 \rv \leq \frac{\delta}{2}
\]
for any $-(m+m_0) \leq p \leq m+m_0$.
The existence of such a function $\xi$ is ensured by Proposition 7.3.8 of \cite{EO2018} due to the amenability of $\Z$ (cf.~\cite[Lemma 3.4]{B1999}).
We define a ccp map
\[
\Psi _{m, n, m_0,\delta}\colon M_{\mathcal{L}(m, n, m_0)} \otimes \overline{\left(\mO_A\right)^{\Ga}} \to \left(\mO_A\right)^{''}
\]
by
\[
\Psi _{m, n, m_0,\delta }\left(\sum_{\mu, \nu \in \mathcal{L}(m, n, m_0)}e_{\mu, \nu } \otimes X_{\mu, \nu} \right) \coloneqq  \sum_{\mu, \nu \in \mathcal{L}(m, n,m_0)} \xi(|\mu|)\overline{\xi (|\nu|)}S_\mu X_{\mu, \nu} S^* _\nu, \quad X_{\mu, \nu} \in \overline{\left(\mO_A\right)^{\Ga}}.  
\]
The following is our analog of \cite[Lemma 3.4]{Matsumoto2002}.
\begin{lem}\label{lem:1221}
For any $f\in \Omega_A(m_0,n)$, $l\in \{0,\ldots ,m-1\}$ and $\delta >0$ with $m,n,m_0\in \N$, it holds that
 \[
\left\| \Psi_{m', n, m_0,\delta} \circ \Phi_{m', n, m_0 }\left(\gamma _A^l f \right) -\gamma _A^l (f) \right\| \leq \frac{\delta}{2}.
 \]
 \end{lem}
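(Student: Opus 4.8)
The plan is to exploit the gauge-equivariance of $\gamma_A$ together with Proposition \ref{prop:1117} in order to show that $\Psi_{m',n,m_0,\delta}\circ\Phi_{m',n,m_0}$ (with $m'=m+m_0$) acts on $\gamma_A^l(f)$ simply as multiplication by a scalar, which the choice of $\xi$ forces to lie within $\delta/2$ of $1$.

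First I would record that $\gamma_A$ commutes with the canonical gauge action $\Ga$; this is immediate from the defining formula $\gamma_A(S_\alpha S^*_\beta)=\sum_i S_{i\alpha}S^*_{i\beta}$, since $|i\alpha|-|i\beta|=|\alpha|-|\beta|$. Writing $f=S_\alpha S^*_\beta\in\Omega_A(m_0,n)$ (the case $f=1$ is trivial, as then $\gamma_A^l(f)=1$), the element $f$ is gauge-homogeneous of degree $p:=|\alpha|-|\beta|$ with $|p|\le m_0$, hence so is $\gamma_A^l(f)$, meaning $\Ga_z(\gamma_A^l(f))=z^{p}\gamma_A^l(f)$ for all $z\in\T$. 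Applying \eqref{eq:1221c} in the von Neumann algebra $(\mO_A)''\subset B(\mathcal{H}_\varphi)$, we obtain $E_{|\mu|-|\nu|}(\gamma_A^l(f))=\gamma_A^l(f)$ if $|\mu|-|\nu|=p$ and $E_{|\mu|-|\nu|}(\gamma_A^l(f))=0$ otherwise, whence
\[
\Phi_{m',n,m_0}\bigl(\gamma_A^l(f)\bigr)=\sum_{\substack{\mu,\nu\in\mathcal{L}(m',n,m_0)\\ |\mu|-|\nu|=p}}e_{\mu,\nu}\otimes S^*_\mu\,\gamma_A^l(f)\,S_\nu .
\]

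Next I would apply $\Psi_{m',n,m_0,\delta}$ and regroup the resulting sum according to the lengths $s=|\mu|$ and $r=|\nu|$, using that $\mathcal{L}(m',n,m_0)$ is the disjoint union of the finite sets $\overline{L(k,n)}$, $k=m',\dots,m'+m_0$, with $\overline{L(k,n)}$ consisting of words of length exactly $k$. This gives
\[
\Psi_{m',n,m_0,\delta}\circ\Phi_{m',n,m_0}\bigl(\gamma_A^l(f)\bigr)=\sum_{\substack{s,r\in\{m',\dots,m'+m_0\}\\ s-r=p}}\xi(s)\overline{\xi(r)}\,\Bigl(\sum_{\mu\in\overline{L(s,n)}}S_\mu S^*_\mu\Bigr)\gamma_A^l(f)\Bigl(\sum_{\nu\in\overline{L(r,n)}}S_\nu S^*_\nu\Bigr).
\]
Now I invoke Proposition \ref{prop:1117} in the form valid on the GNS space $\mathcal{H}_\varphi$, which holds by Remark \ref{rem:1228b} together with the definition of the operators $S_{(k,n;l_k)}$ on $B(\mathcal{H}_\varphi)$: for every $k$ in the relevant range one has $\sum_{\kappa\in\overline{L(k,n)}}S_\kappa S^*_\kappa=1$. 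The two inner factors collapse to $1$, leaving
\[
\Psi_{m',n,m_0,\delta}\circ\Phi_{m',n,m_0}\bigl(\gamma_A^l(f)\bigr)=\Bigl(\sum_{\substack{s,r\in\{m',\dots,m'+m_0\}\\ s-r=p}}\xi(s)\overline{\xi(r)}\Bigr)\gamma_A^l(f).
\]

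Finally I would bound the remaining scalar. As $\xi=\xi_{m',m_0,\delta}$ was chosen so that $\bigl|\sum_{s-r=p}\xi(s)\overline{\xi(r)}-1\bigr|\le\delta/2$ for every $p$ with $|p|\le m'+m_0$ (in particular for our $p$), and as $\|\gamma_A^l(f)\|\le\|f\|\le1$ (since $\gamma_A$ is ucp, hence contractive, and $\|f\|\le1$ for $f\in\Omega_A(m_0,n)$), we conclude
\[
\bigl\|\Psi_{m',n,m_0,\delta}\circ\Phi_{m',n,m_0}(\gamma_A^l(f))-\gamma_A^l(f)\bigr\|=\Bigl|\sum_{s-r=p}\xi(s)\overline{\xi(r)}-1\Bigr|\;\bigl\|\gamma_A^l(f)\bigr\|\le\frac{\delta}{2},
\]
which is the assertion. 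The only step that is not purely mechanical is the transfer of Proposition \ref{prop:1117} from the canonical representation on $l^2(\Sigma_A)$ to $\mathcal{H}_\varphi$; this is precisely what Remark \ref{rem:1228b} provides, so I do not expect a real obstacle --- the rest is the bookkeeping of the grading and a disjointness argument.
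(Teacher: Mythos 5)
Your argument is correct and follows essentially the same route as the paper's proof: gauge-homogeneity of $\gamma_A^l(f)$ of degree $p$ with $\lvert p\rvert\le m_0$ reduces $\Phi_{m',n,m_0}$ to the terms with $\lvert\mu\rvert-\lvert\nu\rvert=p$, Proposition \ref{prop:1117} (valid on $\mathcal H_\varphi$ by Remark \ref{rem:1228b}) collapses the length-$s$ and length-$r$ sums to $1$, and the choice of $\xi$ bounds the remaining scalar by $\delta/2$. No gaps.
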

 \begin{proof}
Let $f\in \Omega_A(m_0,n)$ and $l\in \{0,\ldots ,m-1\}$.
It follows from the observation around \eqref{eq:1221c} that there exists $p\in \N$ with $\vert p\vert \le m_0$ such that $E_q(\gamma _A^l(f)) =\gamma _A^l(f)$ if $q=p$ and $=0$ otherwise.
Hence,
 \begin{align*}
\Psi_{m', n, m_0,\delta} \circ \Phi_{m', n, m_0}\left(\gamma _A^l (f) \right)&=\sum_{\mu, \nu \in \mathcal{L}(m, n,m_0)} \xi(|\mu|)\overline{\xi (|\nu|)}S_\mu S^*_\mu E_{|\mu| -|\nu|}(\gamma _A^l(f))S_\nu S^* _\nu\\
&=
\sum_{\substack{\mu, \nu \in \mathcal{L}(m, n,m_0) \\ \vert \mu \vert - \vert \nu \vert =p}} \xi(|\mu|)\overline{\xi (|\nu|)}S_\mu S^*_\mu \gamma _A^l(f)S_\nu S^* _\nu\\
&=\sum_{\substack{s, r\in\{m,\ldots , m+m_0\} \\  s- r =p}} \xi(s)\overline{\xi (r)} \left(\sum _{\mu  \in \overline{L(s, n)}}S_\mu S^*_\mu \right) \gamma _A^l(f) \left(\sum _{\nu  \in \overline{L(r, n)}}S_\nu S^* _\nu \right).
\end{align*}
Therefore, it follows from Proposition \ref{prop:1117} that
 \begin{align*}
\Psi_{m', n, m_0,\delta} \circ \Phi_{m', n, m_0}\left(f \right)&
=\gamma _A^l(f)  \sum_{\substack{s, r\in\{m,\ldots , m+m_0\} \\  s- r =p}} \xi(s)\overline{\xi (r)}.
\end{align*}
Since $\Vert \gamma _A\Vert =1$ and $\Vert f\Vert =1$, by the construction of $\xi$, this implies the conclusion.
 \end{proof}

Fix an integer $n_1$ and a minimal $A$-orthogonal division $H^{(1)}, \cdots, H^{(m_A)}$ such that $H^{(1)}, \cdots, H^{(P_A)}$ are infinite sets and
\[
\N \backslash \{1, \cdots, n\} \subset \bigcup_{l=1}^{P_A}H^{(l)}
\]
for any $n\ge n_1$. 

\begin{prop}\label{prop:5.6}
Assume that {\tt (SH)}, {\tt (FS)}, {\tt (AF)} and {\tt (PCF)} hold for $A$.
Assume also that there exists a KMS state $\varphi$ for the canonical gauge action $\Ga$.
Then one has 
\[
ht(\ga_A) \leq \sup_{n \ge \max\{n_0,n_1\}}\left(\limsup_{m\to \infty} \frac{1}{m}\log \overline{w(m, n)}\right).
\]
\end{prop}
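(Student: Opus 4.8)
The plan is to follow the pattern of Proposition~\ref{mainprop1}, but with the pair of ccp maps $(\Phi_{m',n,m_0},\Psi_{m',n,m_0,\delta})$ (here and below $m':=m+m_0$) playing the role of $\rho_{m,n}$, and with the infinite-dimensional factor $\overline{(\mO_A)^\Ga}$ cut down to a finite-dimensional subalgebra before ranks are counted. First I would reduce the statement to an $rcp$ estimate: since $\mO_A$ is the norm closure of $\mathrm{span}\,\Omega_A$ (under {\tt (FS)}) and $\bigcup_{m_0,n}\Omega_A(m_0,n)=\Omega_A\cup\{1\}$, the finite sets $\Omega_A(j,j)$, $j\in\N$, increase and the norm closure of the linear span of $\bigcup_{j,k}\ga_A^k(\Omega_A(j,j))$ equals $\mO_A$, so by the properties of $ht$ recalled in Section~\ref{subsec23} we have $ht(\ga_A)=\sup_j ht(\ga_A,\Omega_A(j,j))$; by monotonicity of $ht(\ga_A,\cdot)$ it then suffices to show that for all $m_0\in\N$, $n\ge\max\{n_0,n_1\}$ and $\delta>0$,
\[
\limsup_{m\to\infty}\frac1m\log rcp\!\left(\bigcup_{l=0}^{m-1}\ga_A^l\big(\Omega_A(m_0,n)\big);\delta\right)\le\limsup_{m\to\infty}\frac1m\log\overline{w(m,n)}.
\]
The $rcp$ here may be computed in the GNS representation $\pi_\varphi$, which is faithful because $\mO_A$ is simple by {\tt (SH)} and for which $rcp$ is finite since $\mO_A$ is nuclear (Brown's independence of representation).

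The crucial input is the analog of Lemma~\ref{lem:1113} for $\Phi_{m',n,m_0}$: for $f\in\Omega_A(m_0,n)$ and $0\le l\le m-1$,
\[
\Phi_{m',n,m_0}\big(\ga_A^l(f)\big)=\sum_{\kappa\in\overline{L(1,n)}}Y_\kappa\otimes S_\kappa^*S_\kappa
\]
for certain matrices $Y_\kappa\in M_{\mathcal{L}(m',n,m_0)}(\C)$ which are partial isometries (partial permutation matrices); in particular $\Phi_{m',n,m_0}(\ga_A^l(f))$ lies in $M_{\mathcal{L}(m',n,m_0)}(\C)\otimes\mathcal{F}_n$, where $\mathcal{F}_n$ is the finite-dimensional commutative C$^*$-algebra generated by $\{S_\kappa^*S_\kappa:\kappa\in\overline{L(1,n)}\}$. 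To see this one writes $\Phi_{m',n,m_0}(\ga_A^l(f))=\sum_{\mu,\nu}e_{\mu,\nu}\otimes E_0(S_\mu^*\ga_A^l(f)S_\nu)$ and notes that, because $\ga_A^l(S_\alpha S_\beta^*)$ is $\Ga$-homogeneous of degree $|\alpha|-|\beta|$, the conditional expectation $E_0$ simply discards the pairs $(\mu,\nu)$ with $|\mu|-|\nu|$ not equal to that degree and acts as the identity on the rest; on the remaining terms $S_\mu^*\ga_A^l(f)S_\nu$ is computed exactly as in the proof of Lemma~\ref{lem:1113}, peeling off prefixes by \eqref{EL2}, \eqref{EL3}, \eqref{eq:1117} and Lemma~\ref{seqofsets}, and handling the boundary symbols $(k,n;l_k)$ by {\tt (PCF)} as there. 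I expect this computation to be the main obstacle, since one has to keep simultaneous track of the multi-level index set $\mathcal{L}(m',n,m_0)=\bigcup_{k=m'}^{m'+m_0}\overline{L(k,n)}$, the degree-$l$ shift produced by $\ga_A^l$, the word $\alpha\beta^*$, and the boundary symbols.

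With this in hand I would close the argument as in Proposition~\ref{mainprop1}. Let $\phi_1\colon\mathcal{F}_n\to M_{m_1}(\C)$ and $\psi_1\colon M_{m_1}(\C)\to B(\mathcal{H}_\varphi)$ be ccp maps with $\|\psi_1\phi_1(S_\kappa^*S_\kappa)-S_\kappa^*S_\kappa\|\le\frac{\delta}{2\,\overline{w(1,n)}}$ for every $\kappa\in\overline{L(1,n)}$ (possible since $\mathcal{F}_n$ is finite-dimensional, hence exact), extend $\phi_1$ to $(\mO_A)''$ by Arveson's theorem, and set
\[
\phi:=(id_{M_{\mathcal{L}(m',n,m_0)}}\otimes\phi_1)\circ\Phi_{m',n,m_0},\qquad\psi:=\Psi_{m',n,m_0,\delta}\circ(id_{M_{\mathcal{L}(m',n,m_0)}}\otimes\psi_1),
\]
which are ccp and factor through the finite-dimensional algebra $M_{\mathcal{L}(m',n,m_0)}(\C)\otimes M_{m_1}(\C)$ of rank $\mathcal{W}(m',n,m_0)\cdot m_1$. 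For $f\in\Omega_A(m_0,n)$ and $0\le l\le m-1$, the displayed form of $\Phi_{m',n,m_0}(\ga_A^l f)$ and the triangle inequality over $\kappa\in\overline{L(1,n)}$ (using $\|Y_\kappa\|\le1$) give $\|(id\otimes\psi_1\phi_1)\Phi_{m',n,m_0}(\ga_A^l f)-\Phi_{m',n,m_0}(\ga_A^l f)\|\le\frac\delta2$, and combining with Lemma~\ref{lem:1221} and $\|\Psi_{m',n,m_0,\delta}\|\le1$ yields $\|\psi\phi(\ga_A^l f)-\ga_A^l f\|\le\delta$. Hence $rcp(\bigcup_{l=0}^{m-1}\ga_A^l(\Omega_A(m_0,n));\delta)\le m_1\cdot\mathcal{W}(m',n,m_0)$ with $m_1=m_1(n,\delta)$ independent of $m$; since $\mathcal{W}(m',n,m_0)\le(m_0+1)\max_{m'\le k\le m'+m_0}\overline{w(k,n)}$ and $m'=m+m_0$, taking $\frac1m\log$ and letting $m\to\infty$ gives the displayed bound (the bounded window and the polynomial prefactor do not affect the exponential growth rate), and taking the supremum over $m_0$, $n\ge\max\{n_0,n_1\}$ and $\delta>0$ finishes the proof.
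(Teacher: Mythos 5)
Your proposal is correct and follows essentially the same route as the paper: reduce to an $rcp$ estimate over the sets $\Omega_A(m_0,n)$, establish the analog of Lemma \ref{lem:1113} for $\Phi_{m',n,m_0}$ via the $\Gamma$-homogeneity of $\gamma_A^l(S_\alpha S_\beta^*)$ (so that $E_{|\mu|-|\nu|}$ only filters index pairs), cut down $\mathcal F_n$ by a ccp pair $(\phi_1,\psi_1)$ with error $\delta/(2\,\overline{w(1,n)})$, and combine with Lemma \ref{lem:1221} and the bound $\mathcal W(m',n,m_0)\le(m_0+1)\overline{w(m+2m_0,n)}$. The only difference is that you spell out the homogeneity argument that the paper compresses into ``similar arguments to the proof of Proposition \ref{mainprop1}''.
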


\begin{proof}
The proof is similar to Proposition \ref{mainprop1}.
We show that for all $\delta >0, m_0 \in \N$ and $n \geq \max\{n_0,n_1\}$ one has
\[
\limsup_{m \to \infty} \frac{1}{m} \log rcp\left(\bigcup_{k=0}^{m-1} \ga_A ^{k}\left(\Omega_A(m_0, n)\right); \delta\right) \leq \limsup_{m\to \infty} \frac{1}{m}\log \overline{w(m, n)}.
\]
Let $\mathcal{F}_n$ be the commutative C$^*$-subalgebra of $\left(\mO_A\right)^{''}$ generated by $\{ S^* _\kappa S_\kappa : \kappa \in  \overline{L(1, n)}\}$.
As in the proof of Proposition \ref{mainprop1}, one can find ccp maps
\[
\phi_2 \colon \left(\mO_A\right)^{''} \to M_{m_2}(\C), \quad \psi_2 \colon M_{m_2}(\C) \to B(\mathcal{H}_\varphi )
\]
with
\[
\| \psi_2\phi_2\left(S_\kappa ^* S_\kappa\right) - S_\kappa ^* S_\kappa \| \leq \delta \left(2 \overline{w(1, n)}\right)^{-1}
\]
for any $\kappa \in \overline{L(1, n)}$.
Write $m'\coloneqq  m+m_0$ and set 
\[
\phi\coloneqq  (id_{M_{{\mathcal{L}(m', n, m_0)}}} \otimes \phi_2)\Phi_{m', n, m_0} \ , \ \psi \coloneqq \Psi _{m', n, m_0}(id_{M_{{\mathcal{L}(m', n, m_0)}}} \otimes \psi_2).
\]
See the following diagram, and compare it with the diagram in Section \ref{ss:1goa}.
\vspace{0.1cm}
$$\xymatrix@M=4pt{
\mbox{\footnotesize $\mathcal O_A$} \ar@{->} [rr] ^(0.45){\Phi_{m',n,m_0}}  
\ar@{->} [ddrrr] _(0.4){\phi} & & \ar @{} [d] |{} 
\mbox{\footnotesize $\Phi_{m',n,m_0}(\mathcal O_A)$}
\ar@{->} [ddr] ^{id \otimes \phi_2} \ar@{} [rr] ^{\empty}
 &  & 
\ar@{} [dr] < 8pt> |{} \mbox{\footnotesize $\empty$}
 \ar@{} [rr] ^{\empty} 
\ar@{} [d] &  & \mbox{\footnotesize $B (\mathcal H_\varphi)$}  \\ 
&   &  &   & \mbox{\footnotesize $M_{\mathcal L(m',n,m_0)}  \otimes \mathcal O_A$}
\ar @{} [d]  |{}  \ar@{->} [urr] _{\Psi_{m,n,m_0,\delta}} &  & \\
         &     &   & 
\mbox{\footnotesize $M_{\mathcal L(m',n,m_0)} \otimes M_{m_2} $}  
\ar@{->} [ur] _(0.55){id \otimes \psi_2} 
\ar@/_4pc/ @{->} [uurrr] _(0.65){\psi} &   &   &  }$$

\vspace{0.1cm}

The similar arguments to the proof of Proposition \ref{mainprop1} imply that for any $f \in \Omega_A(m_0, n), m\in \N$ and $l=0, \cdots, m-1$, one has
\[
\left\| (id_{M_{\mathcal L(m', n, m_0)}} \otimes \psi_2 \phi_2) \left(\Phi_{m', n, m_0}\left(\ga_A ^l(f)\right)\right) - \Phi_{m', n, m_0}\left( \ga_A ^l(f) \right) \right\| \leq \frac{\delta}{2}.
\]
Combining it with Lemma \ref{lem:1221}, we obtain
\begin{multline*}
\left\| \psi \phi \left(\ga_A ^l(f) \right) - \ga_A ^l(f) \right\| \\
 \leq \left\| \Psi _{m', n, m_0,\delta }(id_{M_{{\mathcal{L}(m', n, m_0)}}} \otimes \psi_2 \phi_2) \left(\Phi_{m', n, m_0}\ga_A ^l(f)\right) - \Psi_{m', n, m_0,\delta }\Phi_{m', n, m_0}\ga_A ^l(f) \right\| \\
+ \left\| \Psi_{m', n, m_0,\delta } \circ \Phi_{m', n, m_0}\left(\ga_A ^l(f) \right) - \ga_A ^l(f) \right\| 
 \leq \delta.
\end{multline*}
This inequality implies that we have
\[
\frac{1}{m} \log rcp\left(\bigcup_{k=0}^{m-1} \ga_A ^{k}\left(\omega_A(m_0, n)\right); \delta\right) \leq \frac{1}{m}\log \left(m_2\mathcal{W}(m+m_0, n, m_0)\right)
\]
for any $m \in \N$.
Thus, we get
\[
\limsup_{m \to \infty} \frac{1}{m} \log rcp\left(\bigcup_{k=0}^{m-1} \ga_A ^{k}\left(\omega_A(m_0, n)\right); \delta\right) \leq \limsup_{m\to \infty} \frac{1}{m}\log \mathcal{W}(m, n, m_0).
\]
Moreover, we have
\[
\mathcal{W}(m, n, m_0) = \sum_{k=m}^{m+m_0}\overline{w(k, n)} \leq (m_0+1) \overline{w(m+m_0, n)}.
\]
This implies
\begin{equation*}
\begin{split}
\limsup_{m \to \infty} \frac{1}{m} \log rcp\left(\bigcup_{k=0}^{m-1} \ga_A ^{k}\left(\omega_A(m_0, n)\right); \delta\right) &\leq \limsup_{m\to \infty} \frac{1}{m}\log \mathcal{W}(m, n, m_0) \\
&\leq \limsup_{m\to \infty} \frac{1}{m}\log \overline{w(m, n)}.
\end{split}
\end{equation*}
This completes the proof.
\end{proof}

Now  Theorem \ref{thm:1}  immediately  follows from \eqref{eq:1211b}, \eqref{eq:1211c} and Proposition \ref{prop:5.6}.

\section{Proof of Theorem \ref{thm:2}}\label{s:lower}

We prove Theorem \ref{thm:2} passing through the diagram in Subsection \ref{subsec31}, which is a consequence of Proposition \ref{prop:wconti}.
 Assume that {\tt (SH)}, {\tt (UCF)}, {\tt (FS)}  hold for $A$. 
 We divide our proof into four steps.

{\bf Step 1: Variational Principle for commutative entropies.} 
It follows from \cite{Sarig1999}  for potential 0 (more originally from the Ruessian literature \cite{Gurevich1969}) that if $\sigma _A: \Sigma _A\to \Sigma _A$ is topologically mixing,  then it holds that
\[
h_{\mathrm{G}}( \sigma _A) =\sup \left\{ h_\mu (\sigma _A) \,\big\vert\, \text{$\mu$: $\sigma_A$-invariant Borel probability measure}\right\}
\]
(recall Section \ref{ss:Ge}).
In the following step, we will obtain an inequality that relates each metric entropy of $\mu$ and the corresponding non-commutative metric entropy.

{\bf Step 2: Commutative/non-commutative metric entropies.} 
Given a probability measure $\mu$ on $\Sigma _A$, one can define a state $\varphi _\mu$ on the C$^*$-algebra $C(X_A)$ by
\[
\varphi _\mu(f) =\int _{\Sigma_A} \iota_*fd\mu 
\]
(recall \eqref{eq:1228a} for $\iota$).
If $\mu$ is a $\sigma_A$-invariant probability measure, then
\[
\varphi _\mu \left((\hat{\sigma} _A)_*f \right) = \int _{\Sigma_A} f\circ \hat{\sigma} _A\circ \iota\ d\mu 
=\int _{\Sigma_A} f\circ \iota\circ \sigma _A d\mu  = \int _{\Sigma_A} f\circ \iota\ d\mu
=\varphi_{\mu}(f).
\]
That is, $\varphi _\mu$ is a $(\hat{\sigma} _A)_*$-invariant state.
Recall that the Sauvageot--Thouvenot entropy $h_{\varphi_{\mu}}^{ST}((\hat{\sigma}_A)_*)$ for the C$^*$-dynamical system $(C(X_A),\varphi_{\mu},(\hat{\sigma}_A)_*)$ (see Definition 5.1.1 in \cite{NS2006}) is given by
\[
h_{\varphi_{\mu}}^{ST}\left(\left(\hat{\sigma}_A\right)_*\right)=\sup\left\{h_{\nu}(\xi;T)+\sum_{Z\in\xi}S\left(\lambda\left(\,\cdot\,\otimes 1_Z\right),\varphi_{\mu}\right)\right\}\footnote{Here $h_{\nu}(\xi;T)$ is the entropy of a finite partition $\xi$ with respect to $(X,\nu,T)$ and $S$ is the relative entropy of positive linear functionals on $C(X_A)$, which is non-negative for all states on $C(X_A)$.},
\]
where the supremum is taken over all stationary couplings $\lambda$ (i.e., $\lambda$ is a $((\hat{\sigma}_A)_*\otimes (T)_*)$-invariant state on $C(X_A)\otimes L^{\infty}(X,\nu)$ with $\lambda\vert_{C(X_A)}=\varphi_{\mu}$ and $\lambda\vert_{L^{\infty}(X,\nu)}=\nu$) of $(C_0(X),\varphi_{\mu},(\hat{\sigma}_A)_*)$ with abelian dynamical systems $(X,\nu,T)$ and over all finite measurable partitions $\xi$ of $X$.
We can connect the metric entropy $h_{\mu}(\sigma_A)$ with the corresponding non-commutative metric entropy $h_{\varphi_{\mu}}((\hat{\sigma}_A)_*)$ over $C(X_A)$ via $h_{\varphi_{\mu}}^{ST}((\hat{\sigma}_A)_*)$.
Note that $\lambda=\varphi_{\mu}\otimes \mu$ provides a stationary coupling of $(C(X_A),\varphi_{\mu},(\hat{\sigma}_A)_*)$ with $(\Sigma_A,\mu,\sigma_A)$.
Thus we have
\[
h_{\mu}\left(\sigma_A\right) \le h_{\varphi_{\mu}}^{ST}\left((\hat{\sigma}_A)_*\right).
\]
Then it follows from Theorem 5.1.5 in  \cite{NS2006} that for each $\sigma _A$-invariant probability measure $\mu$ on $\Sigma_A$,
\[
h_{\varphi_{\mu}}^{ST}\left(\left(\hat{\sigma}_A\right)_*\right)
=h_{\varphi _\mu}\left(\left(\hat{\sigma} _A\right)_*\right)
\]
since $C(X_A)$ is nuclear and hence $\varphi_{\mu}$-approximating net exists.
Thus we have
\begin{multline*}
\sup \left\{ h_\mu (\sigma _A) \, \big\vert \, \text{$\mu$:  $\sigma_A$-invariant probability measure}\right\}\\
\le \sup \left\{ h_\varphi \left((\hat{\sigma} _A)_*\right) \, \big\vert \, \text{$\varphi$:  $\left(\hat{\sigma}_A\right)_*$-invariant state}\right\}.
\end{multline*}

{\bf Step 3:
 Variational Principle inequality for non-commutative entropies.}
Note that the statement in Proposition 6.2.7 of \cite{NS2006} holds true, even when we consider ucp self maps instead of automorphisms.
Hence if $\gamma $ is a ucp map on an exact C$^*$-algebra $\mathcal O$, then it holds that
\[
h_\varphi (\gamma ) \leq ht (\gamma )
\]
for any $\gamma$-invariant state $\varphi$. 
Therefore we have
\[
 \sup \left\{ h_\varphi \left(\left(\hat{\sigma} _A\right)_*\right) \, \big\vert \, \text{$\varphi$:  $(\sigma_A)_*$-invariant state}\right\}
\le ht \left(\left(\hat{\sigma} _A\right)_*\right).
\]

{\bf Step 4: Completion of the inequality.} 
As a summary of Step 1--Step 3, we have already proved
\[
h_G\left(\sigma_A\right)
\le ht \left(\left(\hat{\sigma} _A\right)_*\right).
\]
Recall that it follows from Proposition \ref{commprop} that
\[
\left(\hat{\sigma}_A\right)_* \circ \Phi = \Phi \circ \gamma_A.
\]
Since the Gelfand--Naimark representation $\Phi$ is an isomorphism and $\mathcal D_A\subset \mathcal O_A$ is a $\gamma _A$-invariant subalgebra, 
\[
ht \left(\left(\hat{\sigma} _A\right)_*: C(X_A) \to C(X_A)\right) = ht \left(\gamma _A: \mathcal D_A \to \mathcal D_A\right) \leq ht (\gamma _A: \mathcal O_A \to \mathcal O_A)
\]
(recall Section \ref{subsec23}).
Combining all the estimates, we get the desired inequality in Theorem \ref{thm:2}.

\appendix
 
\section{Relation with subshifts  and Matsumoto algebras}
In this appendix, we briefly recall the canonical way  to construct a countable Markov shift from a class of mixing subshifts with a finite alphabet  and discuss about the difference between the non-commutative entropies of the canonical cp map of the Matsumoto algebras associated with subshifts and our canonical cp map of the Exel--Laca algebras.

\subsection{Hofbauer's Markov extension}\label{app:B1}
Let $T: [0,1] \to [0,1]$ be a piecewise monotonic map, that is,  there exist disjoint intervals $I_1, \ldots , I_k$  such that $ [0,1] \setminus \bigcup _{j=1}^kI_j$ is a finite set, denoted by $F$, and  $T \vert _{I_j}$ is monotonic and continuous for each $1\leq j \leq k$.
 Assume that $T$ is topologically mixing.
Consider the maximal continuity set 
 \[
 X_T=[0,1] \setminus \bigcup _{n=0}^\infty T^{-n} (F),
\]
and the coding map $\mathcal I : X_T \to \{ 1, \ldots ,k\} ^{\mathbb N}$ given by
\[
(\mathcal I(x) )_j = \ell \quad \text{if $T^{j-1}(x) \in I_\ell$} .
\]
We note that $\mathcal{I}$ is well-defined and injective since $T$ is mixing (in particular, transitive).
Denote the closure of $\mathcal I(X_T) $ by $\Sigma _T$, which is a subshift with the alphabet $\{1,\ldots,k\}$.
To indicate the dependence of $T$, we denote the left-shift operation of $\Sigma _T$ by $\sigma _T$.

In what follows we will construct Hofbauer's Markov extension, which is a countable Markov shift with the same entropy as $\Sigma _T$.\footnote{This construction was later extended to any subshift by Buzzi \cite{Buzzi1997}, but for notational simplicity, we here only consider Hofbauer's Markov extension.}
Let $D\subset\Sigma_T$ be a closed subset with $D\subset [i]\coloneqq \{ x=\{x_j\} _{j\in\mathbb N}\in \Sigma _T \, :\,  x_1 =i\}$ for some $1\le i\le k$.
We say that a non-empty closed subset $C\subset\Sigma_T$ is a \textit{successor} of $D$ if $C=[j]\cap\sigma(D)$ for some $1\le j\le k$. 
We define a set $\mathcal{D}$ of vertices by induction. First, we set
$\mathcal{D}_0=\{[1],\ldots,[k]\}$. If $\mathcal{D}_n$ is defined for $n\ge 0$, then we define $\mathcal{D}_{n+1}$ by
\[
\mathcal{D}_{n+1}=
\left \{C \subset\Sigma_T : \text{there exists }D\in\mathcal{D}_n\text{ such that }C\text{ is a
successor of }D \right\}.
\]
We note that $\mathcal{D}_n$ is a finite set for each $n$ since the number of successors of any closed subset of $\Sigma_T$ is at most $k$ by the definition. Finally, we set
\[
\mathcal{D} =\bigcup_{n\ge 0}\mathcal{D}_n.
\]
We define a matrix  $A_T=\{A_{T}(D, C)\}_{(D,C)\in\mathcal{D}^2}$ by
\[
A_{T}(D, C)=
\left\{
\begin{array}{ll}
1 & (D\rightarrow C), \\
0 & (\text{otherwise}).
\end{array}
\right.
\]
Then $\Sigma _{A_T}=\{\{D_i\}_{i\in\mathbb{N}}\in \mathcal{D}^{\mathbb{N}} : D_i\rightarrow D_{i+1} \; \text{for all $i\in\mathbb{N}$} \}$ is a Markov shift with a countable alphabet $\mathcal{D}$ and a transition matrix $A_T$.
We define $\Psi\colon \Sigma _{A_T}\to \{1,\ldots,k\}^{\mathbb{N}}$ by 
\[
\Psi(\{D_i\}_{i\in\mathbb{N}}) =\{x_i\}_{i\in\mathbb{N}}
\quad  \text{for $\{D_i\}_{i\in\mathbb{N}}\in\Sigma _{A_T}$},
\]
where $1\le x_i\le k$ is the unique integer such that $D_i\subset [x_i]$ holds for each $i\in\mathbb{N}$.
Then it is clear that $\Psi$ is continuous, countable-to-one, and satisfies $\Psi\circ\sigma=\sigma\circ\Psi$.
We remark that $\Sigma _{A_T}$ is not mixing in general although $\Sigma_T$ is mixing.

\begin{thm}$\mathrm{(}$\cite[Theorem 11]{Hofbauer1986}$\mathrm{)}$
\label{irreducible}
Suppose that $h_{\rm top}(\sigma   _T)>0$. Then we can find a subset $\mathcal{C}\subset\mathcal{D}$ such that $\Sigma _{A_{T,\mathcal{C}}}$ is mixing and $\Psi(\Sigma _{A_{T,\mathcal{C}}})=\Sigma_T$, where $A_{T,\mathcal{C}} =\{ A_{T,\mathcal{C}}(D,C)\}_{(D,C)\in\mathcal{C}^2}$ denotes the submatrix of $A_T$ for $\mathcal{C}\subset\mathcal{D}$.
\end{thm}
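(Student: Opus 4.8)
The plan is to produce a canonical section of $\Psi$ over $\Sigma_T$, and then to cut the Markov diagram $(\mathcal D,A_T)$ down to a single strongly connected, aperiodic subdiagram that still admits such a section. First I would build the \emph{canonical lift} $\tau\colon\Sigma_T\to\Sigma_{A_T}$: for $x=\{x_i\}_{i\in\mathbb N}\in\Sigma_T$ put $D_1(x)=[x_1]$ and, inductively, $D_{n+1}(x)=[x_{n+1}]\cap\sigma(D_n(x))$. Since every finite subword of $x$ is admissible, each $D_n(x)$ is a nonempty closed subset of $\Sigma_T$ and $D_{n+1}(x)$ is a successor of $D_n(x)$; hence $\tau(x):=\{D_n(x)\}_{n\in\mathbb N}\in\Sigma_{A_T}$ and $\Psi\circ\tau=\mathrm{id}_{\Sigma_T}$. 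In particular $\Psi(\Sigma_{A_T})=\Sigma_T$ comes for free, so the whole content of the theorem is to choose $\mathcal C\subset\mathcal D$ so that the subgraph on $\mathcal C$ is strongly connected and aperiodic while $\Psi$ still maps $\Sigma_{A_{T,\mathcal C}}$ onto $\Sigma_T$. It is worth reformulating the last requirement combinatorially (writing $D\to D'$ for ``$D'$ is a successor of $D$''): it says that every finite admissible word $x_1\cdots x_\ell$ of $\Sigma_T$ is spelled by a path $D_1\to\cdots\to D_\ell$ with all $D_i\in\mathcal C$ and $D_i\subset[x_i]$, and moreover that such paths can be prolonged indefinitely within $\mathcal C$ --- the prolongation clause being genuinely nontrivial, since $\Sigma_{A_{T,\mathcal C}}$ is a countable Markov shift and hence not compact, so one cannot simply pass to a limit of finite realizations.

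Next I would study the directed graph $G=(\mathcal D,A_T)$. By the inductive construction of $\mathcal D$, every vertex is accessible from the finite set $\mathcal D_0=\{[1],\dots,[k]\}$. The key structural observation is that the vertex reached by reading an admissible word $w$ carries the follower set $F(w)=\{z:wz\in\Sigma_T\}=\sigma^{|w|}([w])$, and that forgetting an initial segment of $w$ can only enlarge $F(w)$ (by shift-invariance of $\Sigma_T$); this monotonicity, combined with the topological mixing of $\sigma_T$ (which supplies, for any two cylinders, admissible concatenations $u\,\eta\,v$ for all sufficiently long $\eta$), is what allows one to re-enter a prescribed region of $G$ after a bounded detour. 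The hypothesis $h_{\mathrm{top}}(\sigma_T)>0$ forces the number of admissible $n$-words of $\Sigma_T$, hence $G$ itself, to grow exponentially; by the Gurevich--Salama theory of countable graphs this guarantees that $G$ has a strongly connected component of positive Gurevich entropy, and together with mixing I would argue that one such component $\mathcal C$ can be singled out which is accessible from $\mathcal D_0$ and into which every vertex of $\mathcal D$ flows. This is exactly where mixing and positive entropy are both indispensable: without positive entropy the recurrent part of $G$ may be too thin to surject onto $\Sigma_T$, and without mixing $G$ may split into several incomparable recurrent components, or carry a nontrivial period.

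With $\mathcal C$ fixed, surjectivity of $\Psi$ on $\Sigma_{A_{T,\mathcal C}}$ would follow by, given $x\in\Sigma_T$, tracking the canonical lift $\tau(x)$ but, whenever it is about to leave $\mathcal C$, re-routing through a vertex lying over $x_n$ with a larger follower set that belongs to $\mathcal C$ --- the follower-set monotonicity above guarantees such a vertex exists and that the re-routed path still sits over $x$ and can be continued, which produces a genuine point of $\Sigma_{A_{T,\mathcal C}}$ above $x$. Aperiodicity of the subgraph on $\mathcal C$ would come directly from mixing of $\sigma_T$: mixing produces, for a fixed vertex of $\mathcal C$, closed loops of all sufficiently large lengths, so the period of $\mathcal C$ equals $1$. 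I expect the main obstacle to be precisely the middle step --- the combinatorial dissection of the Markov diagram and the proof that a single strongly connected component both receives all of $\mathcal D$ and surjects onto $\Sigma_T$; the easy observation $\Psi(\Sigma_{A_T})=\Sigma_T$ is essentially a red herring, and the real work is to achieve this while respecting strong connectedness and, because of non-compactness, while controlling infinite forward continuations rather than merely finite ones.
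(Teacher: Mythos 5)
First, note that the paper gives no proof of this statement: it is quoted verbatim from Hofbauer (\cite[Theorem 11]{Hofbauer1986}), so your sketch must be measured against Hofbauer's original argument rather than anything in the present text. Your opening step is correct: the canonical lift $\tau(x)=\{\sigma^{n-1}([x_1\cdots x_n])\}_{n\in\mathbb N}$ is a genuine point of $\Sigma_{A_T}$ over $x$, hence $\Psi(\Sigma_{A_T})=\Sigma_T$, and you rightly identify that the whole content of the theorem is the choice of a single mixing component $\mathcal{C}$ through which every $x$ still lifts.

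The surjectivity argument, however, has a fatal flaw. By the definition of successor, for each vertex $D$ and each symbol $j$ there is \emph{at most one} successor of $D$ contained in $[j]$, namely $[j]\cap\sigma(D)$. Consequently a path in $\Sigma_{A_T}$ lying over a fixed $x\in\Sigma_T$ is completely determined by its initial vertex: one checks that $D_{i+1}=\sigma^{i}\left(D_1\cap[x_1\cdots x_{i+1}]\right)$, so the fibre $\Psi^{-1}(x)$ is parametrized by those $D_1\in\mathcal D$ with $x\in D_1\subset[x_1]$. Your re-routing move --- replacing $D_n$ by a larger vertex $D_n'\subset[x_n]$ of $\mathcal C$ whenever the path is about to leave $\mathcal C$ --- produces a sequence that is no longer a path, since $D_n'$ is not the successor of $D_{n-1}$. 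What is actually needed is a single initial vertex whose \emph{entire} forward successor orbit over $x$ remains in $\mathcal C$, a for-all-time condition that cannot be achieved by local surgery; Hofbauer extracts it from the special structure of Markov diagrams of piecewise monotonic maps (each $\sigma(D)$ fails to cover at most boundedly many of the cylinders it meets), not from follower-set monotonicity alone. A second, independent gap: exponential growth of the graph does \emph{not} yield a strongly connected component of positive Gurevich entropy --- an infinite binary tree grows exponentially and has no nontrivial strongly connected component at all. The existence of such a component rests on Hofbauer's lifting lemma (ergodic measures of positive entropy on $\Sigma_T$ lift to $\Sigma_{A_T}$, after which Poincar\'e recurrence places the lift on a recurrent, i.e.\ strongly connected, part); this is the technical heart of the result and is nowhere addressed in your sketch.
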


Since  topological entropy does not change by a countable-to-one semi-conjugacy, it holds that $h_{\mathrm{top}}(\sigma _T) = h_{\mathrm{top}}(\sigma _{A_{T,\mathcal{C}}}) $.
It is known that $A_{T,\mathcal{C}}$ is finite if and only if $\Sigma _T$ is sofic (cf.~\cite{Buzzi2006}).

An important example of a non-sofic shift $\Sigma _T$ induced by a piecewise monotonic map $T$ is the \emph{$\beta$-shift} (i.e.~$T(x) = \beta x \mod 1$) with a non-algebraic $\beta >1$.
In fact, when $\beta$ is not algebraic, there is an increasing sequence $\{ i_n\}_{n\in \mathbb N} \subset \mathbb N$ (determined by the so-called kneading sequence of $T$) and an integer $k$ such that the transition matrix $A_T=\{ A_T(i,j)\}_{(i,j)\in \mathbb N^2}$ is of the form 
\[
A_T(i,j)=
\begin{cases}
1 \quad &((i,j)=(i_n,1) \; \text{for some $n$} \;\; \text{or} \;\; i=j-1)\\
0 \quad &(\text{otherwise})
\end{cases} 
\]
for each $i, j \ge k$ (cf.~\cite{Buzzi2006}).
Comparing it with \eqref{eq:renewal} (together with Example \ref{example:1}), one may easily see that the transpose matrix $A_T^{tr}$ of $A_T$ satisfies {\tt (SH)}, {\tt (FS)}, {\tt (AF)}, {\tt (SI)}, {\tt (SD)} and {\tt (O)}.

\subsection{Comparison with the canonical cp map of Matsumoto algebras}\label{app:B2}
Given a subshift $\Sigma \subset \{ 1,\ldots ,k\}^{\mathbb N}$, Matsumoto introduced in \cite{Matsumoto2002} a canonical way to construct a $\lambda$-graph system $\mathcal L(\Sigma)=(V, E, \lambda ,\iota)$,\footnote{Here,  a multiple $\mathcal L=(V, E, \lambda ,\iota)$ is called a $\lambda$-graph system if it consists of a vertex set $V= \bigcup _{l\in \mathbb N} V_l$,  an edge set $E=  \bigcup _{l\in \mathbb N} E_{l,l+1}$, a labeling map $\lambda : E\to \{1,\ldots ,k\}$ and surjective maps $\iota =\{\iota _{l,l+1}\}_{l \in \mathbb N}$, $\iota _{l,l+1}: V_{l+1}\to V_l$, such that (1) each $V_l$, $E_{l,l+1}$ is a finite set, (2) every vertex in $V$ has outgoing edges and every vertex in $V$, except in $V_1$, has incoming edges, and (3)  there exists an edge in $E_{l,l+1}$ with label $i$ and terminal $v \in V_{l+1}$ if and only if there exists an edge in $E_{l-1,l}$ with label $i$ and terminal $\iota (v)\in V _l$.}
which satisfies
\[
\Sigma =\{ \{ \lambda (e_j)\}_{j\in \mathbb N} \, :\,e_j\in E_{j-1,j}, \; t(e_j)=s(e_{j+1})\},
\]
where $s(e)$, $t(e)$ are source and terminal vertexes of an edge $e$, respectively.
The \emph{Matsumoto algebra} $\mathcal O_{\mathcal L(\Sigma)}$ is the universal unital C$^*$-algebra generated by partial isometries $\{ S_j\} _{j=1,\ldots ,k}$ and projections $\{E_{m}^l\} _{m=1,\ldots ,M(l), \, l \in \mathbb N}$ satisfying that
\begin{itemize}
\item $\sum _{j=1}^k S_j S_j^* =1$;
\item $\sum _{m=1}^{M(l)} E_m^l =1$,  $E_n^l =\sum _{m=1}^{M(l+1)}I_{l,l+1}(n,m) E_m^{l+1}$;
\item $(S_i S_i^* )E_n^l = E_n^l(S_i S_i^* )$;
\item $S_i^*  E_n^lS_i^*  =\sum _{m=1}^{M(l+1)} A_{l,l+1}(n,i,m) E_m^{l+1}$,
\end{itemize}
for any $1\le i\le k$, $1\le n\le M(l)$ and $l\in \mathbb N$, where
\begin{itemize}
\item[$\empty$] $A_{l,l+1}(n,i,m)=
\begin{cases}
1 \quad &(s(e)=v_n^l, \; \lambda (e) =i ,\; t(e)=v_m^{l+1} \; \text{for some $e\in E_{l,l+1}$})\\
0 \quad &(\text{otherwise})
\end{cases}$;
\item[$\empty$]
$I_{l,l+1}(n,m)=
\begin{cases}
1 \quad &(\iota(v_m^{l+1}) =v_n^l)\\
0 \quad &(\text{otherwise})
\end{cases}.$
\end{itemize}
Furthermore, the canonical cp map $\gamma _{\mathcal L(\Sigma)}: \mathcal O_{\mathcal L(\Sigma)}\to \mathcal O_{\mathcal L(\Sigma)}$ is defined by
\begin{equation}\label{eq:canonicalcpsubshift}
\gamma _{\mathcal L(\Sigma)}(T)\coloneqq  \sum _{j=1}^k S_j T S_j^* \quad (T\in \mathcal O_{\mathcal L}).
\end{equation}
Then, the following estimate of the non-commutative entropy of $\gamma _A$ holds in terms of the volume entropy  $h_{\mathrm{vol}}(\mathcal L(\Sigma)) \coloneqq  \limsup _{l\to\infty} \frac{1}{l} \log \vert P_l(\mathcal L(\Sigma))\vert$ of $\mathcal L(\Sigma)$,
where $P_l(\mathcal L(\Sigma))$ is the set of all labeled paths starting at $V_1$ and terminating at $V_l$.
\begin{thm}$\mathrm{(}$\cite[Corollary 3.6]{M2005}$\mathrm{)}$
It holds that
\[
ht (\gamma _{\mathcal L(\Sigma)} ) = ht (\gamma _{\mathcal L(\Sigma)} \vert _{\mathcal D_{\mathcal L(\Sigma)}}) = h_{\mathrm{vol}}(\mathcal L(\Sigma)), 
\]
where $\mathcal D_{\mathcal L(\Sigma)}$ is the commutative C$^*$-subalgebra of $\mathcal O_{\mathcal L(\Sigma)}$ given by
\[
\mathcal D_{\mathcal L(\Sigma)} =\overline{\mathrm{span}}\left\{ S_\alpha E_m^l S^* _\alpha : \alpha \in \Sigma ^*,  \; m=1,\ldots,M(l), \; l\in \mathbb N \right\}.
\]
\end{thm}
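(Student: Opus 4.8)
The plan is to prove the two outer equalities by establishing the chain
\[
h_{\mathrm{vol}}(\mathcal L(\Sigma)) \ \le\ ht\bigl(\gamma_{\mathcal L(\Sigma)}\vert_{\mathcal D_{\mathcal L(\Sigma)}}\bigr) \ \le\ ht\bigl(\gamma_{\mathcal L(\Sigma)}\bigr) \ \le\ h_{\mathrm{vol}}(\mathcal L(\Sigma)),
\]
which forces all three quantities to coincide. The middle inequality is immediate from the first basic property of non-commutative topological entropy recalled in Section~\ref{subsec23}: the defining relations of $\mathcal O_{\mathcal L(\Sigma)}$ give $\gamma_{\mathcal L(\Sigma)}(S_\alpha E_m^l S_\alpha^*) = \sum_{j=1}^k S_j S_\alpha E_m^l S_\alpha^* S_j^* \in \mathcal D_{\mathcal L(\Sigma)}$, so $\mathcal D_{\mathcal L(\Sigma)}$ is a $\gamma_{\mathcal L(\Sigma)}$-invariant C$^*$-subalgebra, whence $ht(\gamma_{\mathcal L(\Sigma)}\vert_{\mathcal D_{\mathcal L(\Sigma)}}) \le ht(\gamma_{\mathcal L(\Sigma)})$.

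For the first inequality I would pass to the commutative picture, exactly as in Section~\ref{subsec31}. Let $X_{\mathcal L(\Sigma)}$ denote the Gelfand spectrum of $\mathcal D_{\mathcal L(\Sigma)}$; since $\gamma_{\mathcal L(\Sigma)}$ restricts to a unital $*$-endomorphism of $\mathcal D_{\mathcal L(\Sigma)}$, it is dual to a continuous map $\tilde\sigma_\Sigma\colon X_{\mathcal L(\Sigma)}\to X_{\mathcal L(\Sigma)}$, so by the last bullet of Section~\ref{subsec23} one has $ht(\gamma_{\mathcal L(\Sigma)}\vert_{\mathcal D_{\mathcal L(\Sigma)}}) = h_{\mathrm{top}}(\tilde\sigma_\Sigma)$. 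It then remains to describe $X_{\mathcal L(\Sigma)}$ concretely: a character is determined by an admissible forward word together with a coherent choice of vertices $(v_l)_l$ in the inverse system $\{V_l,\iota_{l,l+1}\}$ compatible with that word, i.e.\ by a point of the projective limit of the labeled-path spaces $P_l(\mathcal L(\Sigma))$. Counting $(l,\varepsilon)$-separated subsets, distinct labeled paths of length $l$ yield characters separated at the scale governed by level $l$, so $h_{\mathrm{top}}(\tilde\sigma_\Sigma)\ge\limsup_{l\to\infty}\tfrac{1}{l}\log\lvert P_l(\mathcal L(\Sigma))\rvert = h_{\mathrm{vol}}(\mathcal L(\Sigma))$.

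The upper bound $ht(\gamma_{\mathcal L(\Sigma)})\le h_{\mathrm{vol}}(\mathcal L(\Sigma))$ is the substantial part, and I would adapt the completely positive approximation scheme of \cite{BG2000} used in Section~\ref{ss:1goa}. Since $\sum_{|\alpha|=l}S_\alpha S_\alpha^* = 1$ (iterating $\sum_{j=1}^k S_j S_j^*=1$), the $*$-homomorphism $\rho_l(f) \coloneqq \sum_{|\mu|=|\nu|=l} e_{\mu,\nu}\otimes S_\mu^* f S_\nu$ into $M_{|\Sigma^l|}(\mathbb{C})\otimes(\mathcal O_{\mathcal L(\Sigma)})''$ is faithful; composing with a finite-dimensional approximation of the finitely generated commutative algebra spanned by the level-$l$ data $\{S_\mu^* E_m^l S_\nu\}$ produces CPA triples for $\bigcup_{k=0}^{m-1}\gamma_{\mathcal L(\Sigma)}^k(\omega)$, for any finite $\omega$. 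The hard part — and the main obstacle — is to show that, after applying $\rho_{m+m_0}$, the elements $\gamma_{\mathcal L(\Sigma)}^k(f)$ with $k<m$ land in a subalgebra whose rank grows at most like $\lvert P_m(\mathcal L(\Sigma))\rvert$ up to a subexponential factor: the $\lambda$-graph relations $E_n^l=\sum_m I_{l,l+1}(n,m)E_m^{l+1}$ and $S_i^* E_n^l S_i = \sum_m A_{l,l+1}(n,i,m)E_m^{l+1}$ force the ``past'' attached to a word of length $m$ to collapse into one of the $M(m)$ vertex states of $V_m$, and the admissible (word, terminal vertex) pairs are precisely the elements of $P_m(\mathcal L(\Sigma))$. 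Carrying out this rank estimate — in particular controlling how the projections $E_m^l$ interact with the partial isometries $S_\alpha$ under the $\lambda$-graph system axioms — is where the real work lies; once it is in place, $\tfrac{1}{m}\log rcp(\bigcup_{k<m}\gamma_{\mathcal L(\Sigma)}^k(\omega);\delta)$ is bounded above by $\tfrac{1}{m}\log(c_m\lvert P_m(\mathcal L(\Sigma))\rvert)$ with $c_m$ subexponential, and letting $m\to\infty$ closes the chain.
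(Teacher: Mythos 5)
First, a point of reference: the paper does not prove this statement at all --- it is quoted as a black box from Matsumoto \cite[Corollary 3.6]{M2005} in the appendix --- so the only internal material to compare against is the analogous Exel--Laca argument in the proof of Theorem \ref{thm:1}. Your overall architecture (the sandwich $h_{\mathrm{vol}}(\mathcal L(\Sigma)) \le ht(\gamma_{\mathcal L(\Sigma)}\vert_{\mathcal D_{\mathcal L(\Sigma)}}) \le ht(\gamma_{\mathcal L(\Sigma)}) \le h_{\mathrm{vol}}(\mathcal L(\Sigma))$) does match both Matsumoto's proof and the structure used here; the middle inequality is indeed immediate from invariance of $\mathcal D_{\mathcal L(\Sigma)}$, and the first inequality reduces to Matsumoto's identity $h_{\mathrm{vol}}(\mathcal L(\Sigma)) = h_{\mathrm{top}}(\tilde{\sigma}_\Sigma)$, which the paper also records (though your separated-set count is loose: two labeled paths with the same label word but different vertices need not be $(l,\epsilon)$-separated for a fixed $\epsilon$, since the vertex data sits at increasingly deep levels of the projective limit).

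The genuine gap is the upper bound, and it is not merely that you defer ``the real work'': the scheme as literally written cannot give the right answer. Your $\rho_l$ uses matrix units indexed by admissible \emph{words} $\mu,\nu$ of length $l$, so the matrix factor has rank growing like $e^{l\,h_{\mathrm{top}}(\sigma_\Sigma)}$; if the remaining tensor factor were a fixed-rank approximation of a \emph{fixed} finite-dimensional commutative algebra (as in Proposition \ref{mainprop1}, where the tail algebra $\mathcal F_n$ is generated by the fixed finite set $\{S_\kappa^*S_\kappa\}$ and $m_1$ is independent of $m$), you would conclude $ht(\gamma_{\mathcal L(\Sigma)}) \le h_{\mathrm{top}}(\sigma_\Sigma)$, which is \emph{strictly smaller} than $h_{\mathrm{vol}}(\mathcal L(\Sigma))$ for the Dyck shift and contradicts the very theorem you are proving. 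The feature you have not confronted is that the relation $S_i^*E_n^lS_i = \sum_m A_{l,l+1}(n,i,m)E_m^{l+1}$ pushes the level \emph{up}: the elements $S_\mu^*\gamma_{\mathcal L(\Sigma)}^k(f)S_\nu$ with $\lvert\mu\rvert=\lvert\nu\rvert=l$ live in the span of projections $E_m^{l'}$ with $l'$ growing with $l$, so the commutative algebra to be approximated has dimension of order $M(l')$ growing with $l$ and cannot be handled by a single Arveson extension of fixed rank. The correct bookkeeping indexes the approximating matrix units by labeled \emph{paths} in $P_l(\mathcal L(\Sigma))$, i.e.\ by (word, terminal vertex) pairs, which is precisely where $\lvert P_l(\mathcal L(\Sigma))\rvert$ rather than the number of words enters the rank; this is the substance of Matsumoto's proof of Corollary 3.6 and is exactly the step your proposal leaves unproved.
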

Matsumoto showed that if $\Sigma$ is a sofic shift or a $\beta$-shift,  then $h_{\mathrm{vol}}(\mathcal L(\Sigma)) = h_{\mathrm{top}}(\sigma _\Sigma)$.
On the other hand, if $\Sigma$ is a Dyck shift or Motzkin shift, then $h_{\mathrm{vol}}(\mathcal L(\Sigma)) > h_{\mathrm{top}}(\sigma _\Sigma)$.
He also constructed a continuous left-shift operation $\tilde{\sigma} _{\Sigma }$ on a compact Hausdorff space consisting of infinite words, associated with a given subshift $\Sigma$, such that $h_{\mathrm{vol}}(\mathcal L(\Sigma)) = h_{\mathrm{top}} (\tilde{\sigma} _{\Sigma })$ (\cite[Section 2]{M2005}).
So, it follows from the above theorem that when $\Sigma $ is the Dyck shift,
\[
ht (\gamma _{\mathcal L(\Sigma)} \vert _{\mathcal D_{\mathcal L(\Sigma)}}) =  h_{\mathrm{top}} (\tilde{\sigma} _{\Sigma }) > h_{\mathrm{top}}(\sigma _\Sigma).
\]
On the other hand, due to the results in Sections \ref{subsec23} and \ref{subsec31}, if $A=\{ A(i,j)\}_{(i,j)\in \mathbb N^2}$ satisfies {\tt (SH)},  {\tt (UCP)}, {\tt (FS)} (that the Markov extension of Dyck shifts satisfies; cf.~\cite{TY2022}), then it holds that
\[
ht (\gamma _{A} \vert _{\mathcal D_{A}})  = h_{\mathrm{top}}(\hat \sigma _A) \ge  h_{G}(\sigma _A).
\]

\section*{Acknowledgments}

This work was partially supported by JSPS KAKENHI
Grant Numbers 19K14575 and 21K20330.

\bibliographystyle{siam}
\bibliography{bibliography}

\begin{thebibliography}{10}

\bibitem{AKM1965}
{\sc R.~L. Adler, A.~G. Konheim, and M.~H. McAndrew}, {\em Topological
  entropy}, Transactions of the American Mathematical Society, 114 (1965),
  pp.~309--319.

\bibitem{BEFR2018}
{\sc R.~Bissacot, R.~Exel, R.~Frausino, and T.~Raszeja}, {\em Thermodynamic
  formalism for generalized markov shifts on infinitely many states}, arXiv
  preprint arXiv:1808.00765,  (2018).

\bibitem{BG2000}
{\sc F.~P. Boca and P.~Goldstein}, {\em Topological entropy for the canonical
  endomorphism of cuntz--krieger algebras}, Bulletin of the London Mathematical
  Society, 32 (2000), pp.~345--352.

\bibitem{BDV2004}
{\sc C.~Bonatti, L.~J. D{\'\i}az, and M.~Viana}, {\em Dynamics beyond uniform
  hyperbolicity: A global geometric and probabilistic perspective}, vol.~3,
  Springer Science \& Business Media, 2004.

\bibitem{BR2012}
{\sc O.~Bratteli and D.~W. Robinson}, {\em {Operator algebras and quantum
  statistical mechanics}}, vol.~1, Springer Science \& Business Media, 2012.

\bibitem{B1999}
{\sc N.~P. Brown}, {\em {Topological entropy in exact $C^*$-algebras}},
  Mathematische Annalen, 314 (1999), pp.~347--367.

\bibitem{brown2008textrm}
{\sc N.~P. Brown and N.~Ozawa}, {\em C${}^*$-Algebras and Finite-Dimensional
  Approximations}, vol.~88, American Mathematical Soc., 2008.

\bibitem{Buzzi1997}
{\sc J.~Buzzi}, {\em Intrinsic ergodicity of smooth interval maps}, Israel
  Journal of Mathematics, 100 (1997), pp.~125--161.

\bibitem{Buzzi2006}
{\sc J.~Buzzi}, {\em A minicourse on entropy theory on the interval}, arXiv
  preprint math/0611337,  (2006).

\bibitem{C1996}
{\sc M.~Choda}, {\em {Endomorphisms of shift type (entropy for endomorphisms of
  Cuntz algebras)}}, Operator algebras and quantum field theory,  (1996),
  pp.~469--475.

\bibitem{EO2018}
{\sc S.~Eilers and D.~Olesen}, {\em C*-algebras and their automorphism groups},
  Academic press, 2018.

\bibitem{EL1999}
{\sc R.~Exel and M.~Laca}, {\em {Cuntz--Krieger algebras for infinite
  matrices}}, Journal f{\"u}r die reine und angewandte Mathematik, 512 (1999),
  pp.~119--172.

\bibitem{Gurevich1969}
{\sc B.~M. Gurevich}, {\em Topological entropy of a countable markov chain}, in
  Doklady Akademii Nauk, vol.~187, Russian Academy of Sciences, 1969,
  pp.~715--718.

\bibitem{HNP2008}
{\sc B.~Hasselblatt, Z.~Nitecki, and J.~Propp}, {\em Topological entropy for
  nonuniformly continuous maps}, Discrete \& Continuous Dynamical Systems, 22
  (2008), pp.~201--213.

\bibitem{Hofbauer1986}
{\sc F.~Hofbauer}, {\em Piecewise invertible dynamical systems}, Probability
  Theory and Related Fields, 72 (1986), pp.~359--386.

\bibitem{JP04}
{\sc J.~A. Jeong and G.~H. Park}, {\em Topological entropy for the canonical
  completely positive maps on graph {$C^*$}-algebras}, Bulletin of the
  Australian Mathematical Society, 70 (2004), pp.~101--116.

\bibitem{JP06}
\leavevmode\vrule height 2pt depth -1.6pt width 23pt, {\em Topological entropy
  and {AF} subalgebras of graph {$C^*$}-algebras}, Proceedings of the American
  Mathematical Society, 134 (2006), pp.~215--228.

\bibitem{JP09}
\leavevmode\vrule height 2pt depth -1.6pt width 23pt, {\em Topological entropy
  and the {AF} core of a graph {$C^*$}-algebra}, Journal of Mathematical
  Analysis and Applications, 354 (2009), pp.~664--673.

\bibitem{Matsumoto2002}
{\sc K.~Matsumoto}, {\em C*-algebras associated with presentations of
  subshifts}, Documenta Mathematica, 7 (2002), pp.~1--30.

\bibitem{M2005}
\leavevmode\vrule height 2pt depth -1.6pt width 23pt, {\em {Topological entropy
  in C*-algebras associated with lambda-graph systems}}, Ergodic Theory and
  Dynamical Systems, 25 (2005), pp.~1935--1950.

\bibitem{Mohar82}
{\sc B.~Mohar}, {\em The spectrum of an infinite graph}, Linear algebra and its
  applications, 48 (1982), pp.~245--256.

\bibitem{NS2006}
{\sc S.~Neshveyev and E.~St{\o}rmer}, {\em Dynamical entropy in operator
  algebras}, vol.~50, Springer Science \& Business Media, 2006.

\bibitem{PWY2000}
{\sc C.~Pinzari, Y.~Watatani, and K.~Yonetani}, {\em {KMS States, Entropy and
  the Variational Principle in Full C*-Dynamical Systems}}, Communications in
  Mathematical Physics, 2 (2000), pp.~331--379.

\bibitem{PU2010}
{\sc F.~Przytycki and M.~Urba{\'n}ski}, {\em Conformal fractals: ergodic theory
  methods}, vol.~371, Cambridge University Press, 2010.

\bibitem{Raszeja}
{\sc T.~C. Raszeja}, {\em Thermodynamic Formalism for Generalized Countable
  Markov Shifts}, PhD thesis, University of S{\~a}o Paulo, 2021.

\bibitem{Renault00}
{\sc J.~Renault}, {\em Cuntz-like algebras}, in Operator theoretical methods
  ({T}imi\c{s}oara, 1998), Theta Found., Bucharest, 2000, pp.~371--386.

\bibitem{RS2013}
{\sc M.~Rordam and E.~Stormer}, {\em {Classification of nuclear $C^*$-algebras.
  Entropy in operator algebras}}, vol.~126, Springer Science \& Business Media,
  2013.

\bibitem{R1999}
{\sc D.~Ruelle}, {\em Statistical mechanics: Rigorous results}, World
  Scientific, 1999.

\bibitem{Sarig1999}
{\sc O.~M. Sarig}, {\em Thermodynamic formalism for countable markov shifts},
  Ergodic Theory and Dynamical Systems, 19 (1999), pp.~1565--1593.

\bibitem{Sarig2001}
\leavevmode\vrule height 2pt depth -1.6pt width 23pt, {\em Phase transitions
  for countable markov shifts}, Communications in Mathematical Physics, 217
  (2001), pp.~555--577.

\bibitem{Sariglecturenote}
\leavevmode\vrule height 2pt depth -1.6pt width 23pt, {\em {Lecture notes on
  thermodynamic formalism for topological Markov shifts}}.
\newblock
  \url{https://www.weizmann.ac.il/math/sarigo/sites/math.sarigo/files/uploads/tdfnotes.pdf},
  2009.

\bibitem{TY2022}
{\sc H.~Takahasi and K.~Yamamoto}, {\em Heterochaos baker maps and the dyck
  system: maximal entropy measures and a mechanism for the breakdown of entropy
  approachability}, arXiv preprint arXiv:2209.04905,  (2022).

\bibitem{V1995}
{\sc D.~Voiculescu}, {\em Dynamical approximation entropies and topological
  entropy in operator algebras}, Communications in mathematical physics, 170
  (1995), pp.~249--281.

\bibitem{W2000}
{\sc P.~Walters}, {\em An introduction to ergodic theory}, vol.~79, Springer
  Science \& Business Media, 2000.

\bibitem{W1994}
{\sc S.~Wassermann}, {\em Exact $C^*$-algebras and related Topics}, vol.~19,
  Seoul National University, Research Institute of Mathematics, Global Analysis
  Research Center, 1994.

\end{thebibliography}

\end{document}